\newcommand*{\vertbar}{\rule[-1ex]{0.5pt}{2.5ex}}
\definecolor{darkgreen}{rgb}{0.0, 0.2, 0.13}
\newcommand{\R}{\mathbb R}
\newcommand{\minimize}[1]{\underset{#1}{\mathrm{minimize}}}
\newcommand{\argmax}[1]{\underset{#1}{\mathrm{argmax}}}
\newcommand{\tr}{\mathbf{tr}}
\newcommand{\mb}{\mathbf}
\newtheorem{theorem}{Theorem}[section]
\newtheorem{lemma}[theorem]{Lemma}
\theoremstyle{definition}
\theoremstyle{remark}
\begin{document}


\title{Advancing Multi-Secant Quasi-Newton Methods for General Convex Functions}


\author{Mokhwa Lee and Yifan Sun}

%

\maketitle

\begin{abstract}
Quasi-Newton (QN) methods provide an efficient alternative to second-order methods for minimizing smooth unconstrained problems.
While QN methods generally compose a Hessian estimate based on one secant interpolation per iteration, multisecant methods use multiple secant interpolations and can improve the quality of the Hessian estimate at small additional overhead cost. 
However, implementing multisecant QN methods has several key challenges involving method stability, the most critical of which is that when the objective function is convex but not quadratic, the Hessian approximate is not, in general, symmetric positive semidefinite (PSD), and the steps are not guaranteed to be descent directions. 

We therefore investigate a symmetrized and PSD-perturbed Hessian approximation method for multisecant QN. We offer an efficiently computable method for producing the PSD perturbation, show superlinear convergence of the new method, and demonstrate improved numerical experiments over general convex minimization problems. 
We also investigate the limited memory extension of the method, focusing on BFGS, on both convex and non-convex functions. 
Our results suggest that in ill-conditioned optimization landscapes, leveraging multiple secants can accelerate convergence and yield higher-quality solutions compared to traditional single-secant methods.

\end{abstract}



\section{Introduction}
 

We consider the unconstrained minimization problem 
\begin{equation}
    \minimize{x\in \R^n} \quad f(x)
    \label{eq:main}
\end{equation}
where $f:\R^n\to\R$ is a convex function in $\mathcal C^2$, and   bounded below. Newton's method iteratively solves the linear system of order $n$ to get a search direction $d_t$,
\begin{equation}
   \nabla^2 f(x_t) d_t = -\nabla f(x_t)
\label{eq:newton-linsys}
\end{equation}
 where $\nabla^2 f(x_t)$ is the Hessian and $\nabla f(x_t)$ is the gradient of the $t$th iterate. In this case, the next iterate is updated as
 \[
 x_{t+1} = x_t+\alpha d_t
 \]
 where $d_t = -[\nabla^2 f(x_t)]^{-1}\nabla f(x_t)$ and $\alpha > 0$ is a step length parameter. However, while this method is foundational in continuous optimization, obtaining the Hessian matrix and solving \eqref{eq:newton-linsys} becomes computationally impractical for large-scale problems. For this reason, quasi-Newton (QN) methods, such as BFGS \citep{broyden1970convergence, fletcher1970new, goldfarb1970family, shanno1970conditioning}, have been introduced as effective alternatives. These methods efficiently approximate the Hessian using simple operations performed on successive gradient vectors.

In particular, QN methods are  designed to construct the matrix $B_t$ at each iteration which satisfies the \textit{secant condition}
\begin{equation}
B_{t+1}(x_{t+1} -x_t)=  \nabla f(x_{t+1}) -\nabla f(x_t)   
\label{eq:secant}
\end{equation}
where $B_{t+1} \in \mathbb{R}^{n \times n}$ is a Hessian approximation of $f$ at $x_{t+1}$.  The subsequent iterates are then updated 
\begin{equation}
    x_{t+1} = x_t - \alpha B_t^{-1} \nabla f(x_{k}).
    \label{eq:qnstep}
\end{equation}
For $n > 1$, the secant condition \eqref{eq:secant} represents $n$ equations involving $n(n+1)/2$ variables, and is always underdetermined. Thus, a stronger, lesser-explored family of approximations are the \textit{multisecant conditions}, which satisfy
\begin{align}
        B_t(x_{i}-x_{j}) = \nabla f(x_{i})-\nabla f(x_{j}) 
    \label{eq:multisecant}
\end{align}
for some subset of $i\neq j\in \{t,t-1,...,t-q+1\}$ where $q>1$ is the number of past iterates taken into account; this strategy promotes a more accurate Hessian approximation. Conventionally, $q$ is a small positive integer such that $q \ll n$.

While multisecant extensions have been explored in the past literature \citep{gay1978solving}, and are shown to be more powerful approximations than single-secant approaches, they often struggle with stability. Specifically, in the case of DFP \citep{davidon1991variable} and BFGS, a single-secant update is guaranteed to be a \textit{descent search direction}; however, incorporating multisecant conditions destroys this valuable descent property.
 For this reason, multisecant QN methods seem popular only in quadratic optimization, and are not easily generalizable even for convex functions.

\subsection{Related works}

Perhaps the most well-known family of single-secant quasi-Newton methods are Broyden's method \cite{broyden1965class,gay1979some} which gives a rank-1 and non-symmetric update, Powell's method (PSB) which introduces symmetric updates \citep{powell1964efficient}, Davidson-Fletcher-Powell (DFP) \citep{davidon1991variable} , and BFGS named after the concurrent works of \citet{broyden1970convergence}, \citet{powell1964efficient}, \citet{goldfarb2020practical}, and \citet{shanno1970conditioning}. The latter two methods introduce symmetric positive semidefiniteness (PSD) and can be seen as nearest matrices in a modified norm.
These qualities (symmetric and PSD) are often desired to ensure that $d_t = -B_t^{-1}\nabla f(x_t)$ is indeed a descent direction, and as a result the methods are more stable in practice.
There are also many recent works concerning improvements of the single-secant QN methods that involve subsampling \citep{berahas2022quasi}, sketching \citep{pilanci2016iterative} or other forms of stochasticity \citep{goldfarb2020practical}, as well as greedy updates \citep{rodomanov2021greedy}, incremental updates \citep{mokhtari2018iqn}, mixing strategies \citep{eyert1996comparative}, trust regions for improved numerical stability \citep{brust2024trust}, and dense initializations \citep{brust2019dense}  (to name only a few).
The work which is similar in spirit to ours is
\citet{goldfeld1966maximization} which uses a diagonal perturbation to improve conditioning but does not contain  convergence analysis; and 
\citet{abd2023positive} which explores perturbations for Powell's method to ensure PSD estimates.
Finally, we highlight \citet{dennis1974characterization,dennis1977quasi}
for first showing superlinear convergence of Broyden's, and then BFGS method, and 
\citet{nocedal1999numerical,lui2021superlinear}  whose expositions help fill in some of the blanks in the convergence proof.

The multisecant extensions were first explored not long later; \citet{gay1978solving} offer a version of Broyden's method that satisfies the secant condition with multiple prior updates; an argument for superliner convergence is given. This extension was generalized \citep{schnabel1983quasi} for extensions of Broyden's, Powell's method, DFP, and BFGS updates, and offers a perturbation in the Cholesky factorization to maintain PSD and symmetry. 
More recent explorations of multisecant QN methods include 
\citet{fang2009two} which explores the integration of Andersen mixing and
\citet{burdakov2018multipoint} which integrates a sophisticated line search method.
The works most related ours include 
\citet{gao2018block}, which maintain positive semidefinite estimates using eigendecompositions, and \citet{scieur2021generalization} which perform complete positive semidefinite projections at each step. 
Where they explore eigendecompositions, our perturbation is in adding a diagonal, with a carefully tuned magnitude.

The development of limited memory multisecant methods is an important extension, in the regime where even storing a dense $n\times n$ Hessian estimate is prohibitive. Limited memory QN methods have been previously studied \citep{van2005limited,reed2009broyden,kolda1997limited,erway2015efficiently}, especially for Broyden's, DFP, and BFGS \citep{zhu1997algorithm,byrd1994representations,liu1989limited}. 
The L-BFGS method especially was recently popularized for large-scale machine learning systems, such as Google's Sandblaster method \citep{dean2012large}; other methods, such as SR1 \citep{byrd1994representations}, have also been studied in this context. 
More recently, several papers showed superlinear convergence limited memory QN methods, under certain modifications; 
\citep{asl2021analysis} via sophisticated line search strategies;  \citep{gao2023limited} with specific greedy updates; and \citep{berahas2022limited} via a displacement aggregation strategy to mimic a full-memory system. 



\subsection{Contributions and outline}
In this paper, 
we investigate techniques for imposing symmetric and PSD updates in multisecant QN methods through perturbation strategies, especially for ill-conditioned non-quadratic problems. 
Our contributions include
\begin{enumerate}
\item a method of carefully tuned diagonal updates for stable method perturbations, improved through secant rejection methods and scaling techniques;
    \item a superlinear convergence rate of the proposed strategy;
    \item a limited memory extension and usage on nonconvex neural network training.
\end{enumerate}
Section \ref{sec:prelim} reviews the single-secant and multi-secant QN methods, as well as the inverse update using the Woodbury property. 
Section \ref{sec:perturbation} introduces our perturbation method, along with a complexity analysis.
Section \ref{sec:superlinear} gives the superlinear convergence result, and section \ref{sec:numerical} gives extensive numerical comparisons.
Section \ref{sec:extension} discusses the important extensions for limited memory and use in non-convex optimization.

\section{Quasi-Newton methods}
\label{sec:prelim}
\subsection{Single secant methods}
The well-known Newton's method for solving \eqref{eq:main} follows the iterative scheme
\begin{equation}
x_{t+1} = x_t-[\nabla^2 f(x_t)]^{-1}\nabla f(x_t)
\label{eq:newtonstep}
\end{equation}
where $\nabla^2 f(x_t)$ and $\nabla f(x_t)$ are the Hessian and the gradient of $f$ at $x_t$, respectively. 
Newton's method is derived from the truncated second-order Taylor series expanded at the iterate $x_t$, as
\begin{equation}
\nabla f(x_{t+1})\approx \nabla f(x_t)+\nabla^2f(x_t)(x_{t+1}-x_t),
\label{eq:approx-secant-cond}
\end{equation}
under the assumption that $\nabla f(x_{t+1}) = \nabla f(x^*) = 0$ for some $k$, resulting in the Newton step \eqref{eq:newtonstep}. However, computing $\nabla^2 f(x)$ and solving the linear system \eqref{eq:newtonstep} are costly and may suffer from numerical issues.
It is approximated by an $n\times n$ matrix that satisfies \eqref{eq:approx-secant-cond} at each step
\begin{equation}
   B_{t+1}\underbrace{(x_{t+1}-x_t)}_{=:s_t\in \R^n} =  \underbrace{\nabla f(x_{t+1}) - \nabla f(x_t)}_{=:y_t\in \mathbb{R}^n}
   \label{eq:secant_equation}
\end{equation}
where 
$B_{t+1}\in \mathbb{R}^{n\times n}$ estimates the Hessian at iteration ${t+1}$.
Note that this linear system includes $n$ constraints, while a symmetric matrix $B_{t+1}$ contains $\frac{n(n+1)}{2}$ free variables; that is to say, \eqref{eq:secant_equation} is \textit{underdetermined}. 
Thus, QN methods satisfying \eqref{eq:secant_equation} are far from unique, and there is the potential to continually develop improvements.
Four well-known single-secant QN methods are described below.
\begin{itemize}
    \item[$\bullet$] \textit{Broyden's method \citep{broyden1965class}}
    forms rank-1 and non-symmetric updates satisfying secant equations described in \eqref{eq:secant_equation}:
\[
B_{t+1} = B_t + \frac{(y_t-B_ts_t)s_t^\top}{s_t^\top s_t};\tag{Broyden}
\]

\item[$\bullet$] \textit{Powell symmetric Broyden's (PSB) \citep{powell1964efficient}}
symmetrizes the Hessian estimate 
\begin{align*}
B_{t+1} = B_t + \frac{(y_t-B_ts_t)s_t^\top + s_t(y_t-B_ts_t)^\top}{s_t^\top s_t}
+ \frac{1}{2}\frac{(y_t-B_ts_t)^\top s_t}{(s_t^\top s_t)^2}s_ts_t^\top;
\tag{Powell}
\end{align*}

\item[$\bullet$] \textit{ DFP \citep{davidon1991variable}}
provides symmetry and PSD Hessian approximation
\begin{align*}
B_{t+1} &= B_t + \frac{(y_t-Bs_t)y_t^\top + y_t(y_t-B_ts_t)^\top }{y_t^\top s_t} -\frac{y_t(y_t-B_ts_t)^\top s_t y_t^\top }{(y_t^\top s_t)^2} ;\tag{DFP}
\end{align*}

\item[$\bullet$] and  \textit{BFGS  \citep{broyden1970convergence,powell1964efficient,goldfarb1970family,shanno1970conditioning}}
 is the most popular QN algorithm with rank-2 and symmetric updates, and maintains PSD estimates
\begin{equation*}
    B_{t+1} = B_t + \frac{y_ty_t^\top }{y_t^\top s_t}-\frac{B_ts_ts_t^\top B_t}{s_t^\top B_ts_t}. \tag{BFGS}
\end{equation*}
\end{itemize}
Note that each QN method will update the next iterate  as 
\[
    x_{t+1} = x_t-\alpha B_t^{-1}\nabla f(x_t).
    \]
    So, if $B_t$ is PSD, then for $\alpha>0$ small enough, this step is guaranteed to descend ($f(x_{t+1})<f(x_t)$), since   
\begin{equation}
     -\nabla f(x_t)^\top  B_t^{-1} \nabla f(x_t)<0.
    \label{eq:descent_cond}
\end{equation}
However, if $B_{t}$ is not PSD, the inequality \eqref{eq:descent_cond} is not necessarily satisfied and the algorithm will necessarily monotonically decrease at each iteration; the resulting behavior is usually instability and divergence. Therefore, maintaining $B_{t}$ PSD is an important key for QN methods.

\subsection{Multisecant methods}
We now consider incorporating more secant conditions than just on the last two iterates. There are two natural constructions to consider: the ``curve-hugging" version for $i = t,...,t-q+1$, such that
\begin{equation}
s_i = x_{i+1} - x_{i}, \quad y_i = \nabla f(x_{i+1}) - \nabla f(x_{i}),
\label{update-sy-1}
\end{equation}
and the ``anchored at most recent" version for $i = t,...,t-q$, such that
\begin{equation}
s_i = x_{t+1} - x_i, \quad y_i = \nabla f(x_{t+1}) - \nabla f(x_i).
\label{update-sy-2}
\end{equation}
In practice, we find that the two versions seem to have similar performance.   
We represent these choices with 
matrices $S_t\in \R^{n\times q}$ and $Y_t\in \R^{n\times q}$ as 
\begin{equation}
    S_t = \left[
  \begin{array}{cccc}
    \vertbar & \vertbar &        & \vertbar \\
    s_{t-q}    & s_{t-q+1}    & \ldots & s_{t}    \\
    \vertbar & \vertbar &        & \vertbar 
  \end{array}
\right], \qquad 
Y_t = 
\left[
  \begin{array}{cccc}
    \vertbar & \vertbar &        & \vertbar \\
    y_{t-q}    & y_{t-q+1}    & \ldots & y_{t}    \\
    \vertbar & \vertbar &        & \vertbar 
  \end{array}
\right].
\label{update-bigYS}
\end{equation}
Then, the multisecant condition is $B_{t+1}S_t=Y_t$,
which interpolates $q$ previous iterates.
Schabel \citep{schnabel1983quasi} presented the following four multisecant generalizations of QN methods: 
\begin{align}
B_{t+1} &= B_t + (Y_t-B_tS_t)(S_t^\top S_t)^{-1}S_t^\top 
\tag{MS Broyden} \label{eq:MS_Broyden}\\
B_{t+1} &= B_t + (Y_t-B_tS_t)(S_t^\top S_t)^{-1}S_t^\top  + S_t(S_t^\top S_t)^{-1}(Y_t-B_tS_t)^\top \nonumber\\
&\qquad\quad - S_t (S_t^\top S_t)^{-1}(Y_t-B_tS_t)^\top S_t(S_t^\top S_t)^{-1}S_t^\top 
\tag{MS PSB} \label{eq:MS_PSB}\\ 
B_{t+1} &= B_t + (Y_t-B_tS_t)(Y_t^\top S_t)^{-1}Y_t^\top  
+ Y_t(Y_t^\top S_t)^{-1}(Y_t-B_tS_t)^\top  \nonumber\\ 
&\qquad\quad - Y_t(Y_t^\top S_t)^{-1}(Y_t-B_tS_t)^\top S_t(Y_t^\top S_t)^{-1} Y_t^\top 
\tag{MS DFP} \label{eq:MS_DFP}\\
B_{t+1} &= B_t + Y_t(Y_t^\top S_t)^{-1}Y_t^\top 
-B_tS_t(S_t^\top B_tS_t)^{-1}S_t^\top B_t
\tag{MS BFGS} \label{eq:MS_BFGS}
\end{align}
Unlike the single-secant case, symmetry and PSD are only guaranteed to hold in a restricted problem setting. Specifically, Powell's $B_{t+1}$ is guaranteed to be symmetric only if $S_t^\top Y_t$ is symmetric, and DFP's and BFGS's $B_{t+1}$ is symmetric and PSD only if $Y_t^\top S_t$ is symmetric and PSD. However, this is not true in general;  note that the multisecant constraint ($B_{t+1}S_t=Y_t$) enforces $S_t^\top B_{t+1}S_t = S_t^\top Y_t$, so the symmetry or PSD-ness of $B_{t+1}$ is \textit{not possible if $S_t^\top Y_t$ does not have the same corresponding properties,} of which are generally not true for non-quadratic convex functions $f$.

\subsection{Woodbury Inversion of Multisecant BFGS}

The four update rules \eqref{eq:MS_Broyden}, \eqref{eq:MS_DFP}, \eqref{eq:MS_PSB},\eqref{eq:MS_BFGS}
 can be succinctly written as 
 \begin{equation}
 B_{t+1} = B_t + C_{1,t}A^{-1}_tC_{2,t}^\top 
 \label{eq:direct-update}
 \end{equation}
 where $C_{1,t}$, $C_{2,t}$ and $A_t$ depend on $B_t$, $S_t$, and $Y_t$. Specifically, the update is \textit{low-rank}; $A_t$ is $q\times q$ for Broyden's method, and $2q\times 2q$ for the others. 
 To avoid computing inverses, low-rank updates of $B_t$ are updated using the Sherman-Morrison-Woodbury inversion lemma \citep{woodbury1950inverting}. 
This crucial step is a key differentiating feature between Newton's method and QN methods, as it avoids solving an expensive linear system at each step.

We now give the inverse update step for the MS-QN methods.
The inverse update can be directly computed for Broyden's method
\[
B_{t+1}^{-1} =  B_t^{-1}-(B_t^{-1}Y_t-S_t)(S_t^\top  B_t^{-1} Y_t)^{-1}S_t^\top B_t^{-1}
\] 
and BFGS
\begin{align*}
B_{t+1}^{-1}  & = B_t^{-1}-\begin{bmatrix} B_t^{-1}Y_t,& S_t\end{bmatrix}
\begin{bmatrix}
Y_t^\top S_t+Y_t^\top B_t^{-1}Y_t & Y_t^\top S_t\\
S_t^\top Y_t & 0
\end{bmatrix}^{-1}
\begin{bmatrix}
Y_t^\top B_t^{-1}\\ S_t^\top 
\end{bmatrix}.
\end{align*}  
For PSB, the updates are first symmetrized, and 
\begin{eqnarray*}
\frac{ B_{t+1}^{-1}  + B_{t+1}^{-\top} }{2}
=\frac{B_t^{-1}+B_t^{-\top}}{2} + D_{1,t} W_t^{-1} D_{2,t}
\end{eqnarray*}
where for PSB, 
\begin{eqnarray*}
D_{1,k} &=& \begin{bmatrix} B_t^{-1} Y_t - S_t, & B_t^{-1} S_t,& B_t^{-1} S_t,&B_t^{-1} S_t\end{bmatrix}, \\
D_{2,k} &=& \begin{bmatrix} B_t^{-1} S_t,& B_t^{-1} Y_t - S_t, &  B_t^{-1} S_t,&B_t^{-1} S_t\end{bmatrix}, \\
W_t &=& -\begin{bmatrix}
     S_t^\top B_t^{-1}Y_t, & V_t,& V_t, & V_t \\
  X_t,  &   Y_t^\top  B_t^{-1}S_t&  U_t^\top  & U_t^\top  \\
   U_t & V_t&  Z_t +  V_t& V_t\\
   U_t& V_t& V_t&  G_t +  V_t\\  
\end{bmatrix}
\end{eqnarray*}
 where
 \begin{eqnarray*}
U_t &=& S_t^\top B_t^{-1}Y_t - S_t^\top S_t\\
V_t &=& S_t^\top B_t^{-1}S_t\\  
X_t &=& Y_t^\top B_t^{-1}Y_t - S_t^\top Y_t - Y_t^\top S_t + S_t^\top B_tS_t\\
Z_t &=& S_t^\top S_t(S_t^\top B_tS_t)^{-1} S_t^\top S_t\\
G_t &=& -\frac{1}{2} S_t^\top S_t (Y_t^\top S_t  + S_t^\top Y_t)^{-1} S_t^\top S_t.
\end{eqnarray*}
 For DFP, 
 \[
D_{1,t} = \begin{bmatrix} Q_t-S_t,&Q_t,&Q_t  \end{bmatrix} , \qquad 
  D_{2,t} =\begin{bmatrix} Q_t &Q_t-S_t&Q_t\end{bmatrix}\\
  \]
  \[
W_t =-\begin{bmatrix}
       T_t ,&T_t,&T_t\\ 
      T_t - R_t^\top  - R_t + H_t, & R_t +T_t  - R_t^\top ,&T_t - R_t^\top \\
      T_t - R_t,&T_t,& -R_t (H_t-R_t)^{-1}R_t + T_t
  \end{bmatrix}
\]
where 
\[
   R_t = Y_t^\top S_t, \qquad 
   H_t =  S_t^\top B_tS_t,\qquad 
   Q_t =B_t^{-1}Y_t,\qquad 
   T_t = Y_t^\top B_t^{-1}Y_t.
\]
In both cases, to avoid computing $B_t$, we use the relation under the assumption that $S_t$ has full column rank \footnote{This is usually true in the vanilla implementation, and always true when we use the rejection extension.}
\begin{equation}
S_t^\top B_tS_t = (S_t^\top S_t)^{-1} (S_t^\top B_t^{-1}S_t)^{-1}(S_t^\top S_t)^{-1}.
\label{eq:trick:SBS}
\end{equation}
The symmetrization is necessary to avoid the term $S_t^\top B_t^{T}B_t^{-1}Y_t$, which cannot be easily simplified nor cheaply computed if both $B_t$ and $B_t^{-1}$ are not both involved, defeating the purpose of the Woodbury inversion.


Note that by using this inverse update, we reduce computational requirements  from \( O(n^3) \) to \( O(qn^2 + q^3) \). 
 In later sections, we differentiate between using a \textit{direct update} \eqref{eq:direct-update} and an inverse update, via the Woodbury formula.

\section{Multisecant methods with positive semidefinite perturbation}
\label{sec:perturbation}

\subsection{Diagonal perturbation}

The simplest version of our perturbation method is
\begin{equation}
H_{t+1} = H_t + \frac{D_{1,t} W_t^{-1}D_{2,t}^\top +(D_{1,t} W_t^{-1}D_{2,t}^\top )^\top }{2} + \mu_t I
\label{eq:symmetrizedqn}
\end{equation}
where $H_t$ can be $B_t$ (direct solve) or $B_t^{-1}$ (Woodbury inverse), and $D_{1,t}$, $D_{2,t}$, and $W_t$ are chosen such that the vanilla (symmetrized) updates are achieved when $\mu_t = 0$. 
We now introduce a 
 \textit{computationally cheap} ($O(q^3+q^2n)$) method (Alg. \ref{alg:mu}) of producing a $\mu_t$ that ensures $H_{t+1}$ is symmetric PSD, as long as $H_t$ is symmetric PSD.

\begin{theorem}
\label{th:lowrankupdate}
Consider $W$ a non-symmetric matrix, $c>0$ and 
\begin{equation}
 \Delta =  \frac{1}{2} 
    \begin{bmatrix} D_{1} & D_{2} \end{bmatrix}
    \begin{bmatrix} 0 & W^{-1} \\ W^{-T} & 0 \end{bmatrix}
    \begin{bmatrix}D_{1}^\top  \\ D_{2}^\top  \end{bmatrix} \in\mathbb{R}^{n\times n}.
    \label{eq:deltadef}
\end{equation}
Then $\Delta + \mu I$ is PSD if and only if 
\begin{equation}
H_2 = \begin{bmatrix}
  cI &  F\\
 F^\top & cI
\end{bmatrix}- 
(2\mu)^{-1} G + (2\mu)^{-1}     
G
(2\mu C^{-1}+ G )^{-1}
G \in \R^{2\tilde q\times 2\tilde q}
\label{eq:nonsym_testmat}
\end{equation}
is PSD, for 
    \[C = \begin{bmatrix} (cI - c^{-1}FF^\top )^{-1} & W^{-1}-c^{-1}F  (cI - c^{-1}F^\top F)^{-1}  \\ W^{-T}-c^{-1} (cI - c^{-1}F^\top F)^{-1}  F^\top  & (cI - c^{-1}F^\top F)^{-1} \end{bmatrix},
    \]
$G = \begin{bmatrix} D_1^\top  \\  D_2^\top \end{bmatrix} \begin{bmatrix} D_1 &  D_2\end{bmatrix}$ and  $F =  VSU^\top $. Here,  $W = U\Sigma V^\top $ is the SVD of $W$, and $S_{i,i} = \frac{-1+\sqrt{1+4c^2\Sigma_{ii}^{-2}}}{2\Sigma_{i,i}^{-1}}$.
Here, $\tilde q = q$ for Broyden's method, and $\tilde q = 2q$ for Powell, DFP, and BFGS.
\end{theorem}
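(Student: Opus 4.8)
The plan is to realize $\Delta + \mu I$ as a Schur complement of a single augmented matrix, and then read off a \emph{second} Schur complement of the same augmented matrix to obtain $H_2$; equivalence of the two PSD conditions is then automatic. Throughout write $D = \begin{bmatrix} D_1 & D_2\end{bmatrix}\in\R^{n\times 2\tilde q}$, $M = \begin{bmatrix} 0 & W^{-1} \\ W^{-T} & 0\end{bmatrix}$, and $K = \begin{bmatrix} cI & F \\ F^\top & cI\end{bmatrix}$, and note $\Delta = \tfrac12 D M D^\top$ and $G = D^\top D$.

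First I would establish the algebraic heart of the statement: the identity $M = C - K^{-1}$, together with $C \succ 0$ and $K \succ 0$. All three facts decouple under the orthogonal change of basis $\mathrm{diag}(V,U)$ coming from the SVD $W = U\Sigma V^\top$: in this basis $M$, $K$, and $C$ split into independent $2\times 2$ blocks indexed by the singular values, namely $\begin{bmatrix} 0 & \Sigma_{ii}^{-1} \\ \Sigma_{ii}^{-1} & 0\end{bmatrix}$, $\begin{bmatrix} c & S_{ii} \\ S_{ii} & c\end{bmatrix}$, and the corresponding block of $C$. The defining quadratic $S_{ii}^2 + \Sigma_{ii}S_{ii} = c^2$ (equivalent to the stated closed form for $S_{ii}$) gives $c^2 - S_{ii}^2 = \Sigma_{ii}S_{ii} > 0$, so each $2\times2$ block of $K$ is positive definite, whence $K\succ 0$ and the diagonal blocks $(cI - c^{-1}FF^\top)^{-1}$, $(cI-c^{-1}F^\top F)^{-1}$ of $C$ are well defined. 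A direct $2\times 2$ computation then shows the off-diagonal entries of the $i$th block of $K^{-1}+M$ cancel and its diagonal collapses to $\tfrac{c}{\Sigma_{ii}S_{ii}}$, i.e.\ $C = K^{-1} + M$ with each block of $C$ equal to $\tfrac{c}{\Sigma_{ii}S_{ii}}I_2 \succ 0$. This is the step I expect to be the main obstacle, since it is exactly where the precise formula for $S$ is forced; everything afterward is bookkeeping.

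With $M = C - K^{-1}$ in hand, I would introduce the augmented matrix
\[
\mathcal A = \begin{bmatrix} \mu I + \tfrac12 DCD^\top & \tfrac{1}{\sqrt2}D \\ \tfrac{1}{\sqrt2}D^\top & K\end{bmatrix}.
\]
Because $K \succ 0$, the Schur complement of the $K$-block equals $\mu I + \tfrac12 DCD^\top - \tfrac12 DK^{-1}D^\top = \mu I + \tfrac12 D(C-K^{-1})D^\top = \mu I + \Delta$, so $\mathcal A \succeq 0 \iff \Delta + \mu I \succeq 0$. On the other hand, for $\mu > 0$ the top-left block $P := \mu I + \tfrac12 DCD^\top$ is positive definite, so $\mathcal A \succeq 0 \iff K - \tfrac12 D^\top P^{-1}D \succeq 0$.

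Finally I would simplify this second Schur complement with the Sherman-Morrison-Woodbury identity applied to $P^{-1} = (\mu I + D(\tfrac12 C)D^\top)^{-1}$. Substituting $(\tfrac12 C)^{-1} = 2C^{-1}$ and $D^\top D = G$, and factoring $2C^{-1} + \mu^{-1}G = \mu^{-1}(2\mu C^{-1}+G)$, the middle term telescopes to $\tfrac12 D^\top P^{-1}D = (2\mu)^{-1}G - (2\mu)^{-1}G(2\mu C^{-1}+G)^{-1}G$, so that $K - \tfrac12 D^\top P^{-1}D$ is exactly $H_2$. Chaining the two Schur-complement equivalences yields $\Delta + \mu I \succeq 0 \iff H_2 \succeq 0$, which is the claim. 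The only remaining points needing care are the invertibility of $2\mu C^{-1}+G$ (guaranteed by $C\succ 0$ and $\mu>0$) and the implicit assumption $\mu>0$ used to pivot on $P$.
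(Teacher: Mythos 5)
Your proof is correct and takes essentially the same route as the paper: the paper pivots on the augmented matrix $\begin{bmatrix} 2\mu I + DCD^\top & D \\ D^\top & K\end{bmatrix}$ (yours is this matrix rescaled by a factor of two), relies on the same key identity $C = K^{-1}+M$ with the SVD-aligned choice $F = VSU^\top$ making $C$ positive definite, and obtains $H_2$ as the second Schur complement via the Woodbury identity, exactly as you do. Your $2\times 2$ block computation showing the off-diagonals cancel exactly (each block of $C$ equal to $\tfrac{c}{\Sigma_{ii}S_{ii}}I_2$) is a tidier form of the paper's inequality-based verification of the same fact, and your explicit check that both pivot blocks are positive definite (using $c^2-S_{ii}^2=\Sigma_{ii}S_{ii}>0$ and $\mu>0$) handles the ``if and only if'' slightly more carefully than the paper's wording, but the argument is the same.
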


\begin{proof}
Consider first the matrix
\[
H = \begin{bmatrix}2 \mu I + A & D_1 & D_2 \\ D_1^\top  & cI & F\\D_2^\top  & F^\top  & cI\end{bmatrix}
\]
Then one Schur complement of $H$ is 
\[
H_1 := 2\mu I + A - \begin{bmatrix} D_1 &  D_2\end{bmatrix}\begin{bmatrix} cI & F \\ F^\top  & cI \end{bmatrix}^{-1} \begin{bmatrix} D_1^\top  \\  D_2^\top \end{bmatrix} = 2\mu I + 2\Delta
\]
and another is $H_2$ (to be shown later).
So, $H$ is PSD if  either $A+2\mu I$ is PSD and $H_2$ is PSD, or   $\begin{bmatrix} cI & F \\ F^\top  & cI \end{bmatrix}$ is PSD and $H_1$ is PSD, or both.
Here, 
\begin{eqnarray*}
A &=&  \begin{bmatrix} D_1 &  D_2\end{bmatrix}\begin{bmatrix} cI & F \\ F^\top  & cI \end{bmatrix}^{-1} \begin{bmatrix} D_1^\top  \\  D_2^\top \end{bmatrix} +  \begin{bmatrix} D_1 &  D_2 \end{bmatrix}
    \begin{bmatrix} 0 & W^{-1} \\ W^{-T} & 0 \end{bmatrix}
    \begin{bmatrix}D_1^\top  \\ D_2^\top  \end{bmatrix} \\
    &=&  \begin{bmatrix} D_1 &  D_2\end{bmatrix}
    \underbrace{\begin{bmatrix} (cI - c^{-1}FF^\top )^{-1} & E^T  \\ E & (cI - c^{-1}F^\top F)^{-1} \end{bmatrix}}_{=:C}
    \begin{bmatrix} D_1^\top  \\  D_2^\top \end{bmatrix}  
\end{eqnarray*}
where $E = W^{-\top}-c^{-1}  F^\top (cI - c^{-1}F F^\top)^{-1}  $.

Next, we construct $F$ to have the same left and right singular vectors as $W$, so $W = U\Sigma V^\top $ and $F = VSU^\top $. Then   the eigenvalues of $A$ are the same as that of 
 \begin{eqnarray*}
 \begin{bmatrix} V  & 0 \\ 0 & U \end{bmatrix}^\top 
 C
 \begin{bmatrix} V & 0 \\ 0 & U\end{bmatrix}=
\begin{bmatrix} (cI - c^{-1}S^2)^{-1} & \Sigma^{-1}-c^{-1}  (cI - c^{-1}S^2)^{-1} S \\ \Sigma^{-1}-c^{-1} S(cI - c^{-1}S^2)^{-1} & (cI - c^{-1}S^2)^{-1} \end{bmatrix}\\
 \end{eqnarray*}
 which can be rearranged into a block diagonal matrix whose $2\times 2$ blocks are 
 \[
 B_i = \begin{bmatrix} (c - c^{-1}S_{ii}^2)^{-1} & \Sigma_{ii}^{-1}-c^{-1}S_{ii}  (c - c^{-1}S_{ii}^2)^{-1}  \\ \Sigma_{ii}^{-1}-c^{-1} (c - c^{-1}S_{ii}^2)^{-1}  S_{ii} & (c - c^{-1}S_{ii}^2)^{-1} \end{bmatrix}
 \]
These blocks are PSD if $S_{ii}<c$ and
\[
0<
(c - c^{-1}S_{ii}^2)^{-1}   - \frac{(\Sigma_{ii}^{-1}-c^{-1}S_{ii}  (c - c^{-1}S_{ii}^2)^{-1}  )^2}{(c - c^{-1}S_{ii}^2)^{-1}}  
\]
\[\iff\]
\[S_{ii} > (c^2 - S_{ii}^2) (\Sigma_{ii}^{-1}  - c^{-1}(c^2 - S_{ii}^2)) \text{ or }
 S_{ii}  <   (\Sigma_{ii}^{-1} + c^{-1}(c^2-S_{ii}^2))(c^2 - S_{ii}^2)
\]
which is satisfied if 
\[
S_{ii} =  (c^2 - S_{ii}^2) \Sigma_{ii}^{-1}   \quad \iff \quad S_{i,i} = \frac{-1+\sqrt{1+4c^2\Sigma_{ii}^{-2}}}{2\Sigma_{i,i}^{-1}}\leq \frac{\sqrt{4c^2\Sigma_{ii}^{-2}}}{2\Sigma_{i,i}^{-1}} = c.
\]
 Note that  $\begin{bmatrix} cI & F \\ F^\top  & cI \end{bmatrix}$ is PSD whenever $ c > 0$ and the Schur complement $\frac{1}{c}(c^2I-F^\top F)  \succeq 0 $. Since $\|F^\top F\|_2 = c^2\|S\|^2_2 \leq \max_i\, c^4$ then this property holds whenever $  c<1$.

Finally,  the expansion of $H_2$  
\begin{eqnarray*}
H_2&=& \begin{bmatrix}
  cI &  F\\
 F^\top & cI
\end{bmatrix}-\begin{bmatrix} D_1^\top  \\ D_2^\top \end{bmatrix} (A+2\mu I)^{-1}\begin{bmatrix} D_1 & D_2\end{bmatrix} \\
&=& \begin{bmatrix}
  cI &  F\\
 F^\top & cI
\end{bmatrix}- 
\left(
(2\mu)^{-1} G - (2\mu)^{-1}     
G
(2\mu C^{-1}+ G )^{-1}
G \right)\\
\end{eqnarray*}
for $G = \begin{bmatrix} D_1^\top  \\  D_2^\top \end{bmatrix} \begin{bmatrix} D_1 &  D_2\end{bmatrix}$.
This can be shown using elementary calculations.

\end{proof}


Figure \ref{fig:eig_runtime} shows the runtime of Algorithm \ref{alg:mu} vs eigenvalue decompositions using full (\texttt{eig}) or fast partial (\texttt{eigs}) operations. By leveraging low-rank structure, we significantly reduce the runtime complexity to depend critically on $q$ rather than $n$.

\begin{tabular}[t]{ccc}
\begin{minipage}[t]{.4\textwidth}
\begin{algorithm}[H]
    \caption{Compute $\mu$}
    \label{alg:mu}
    \textbf{Input}: $D_1,D_2,W$, $\mu_0$\\
    \textbf{Output}: $\mu$ so that $\Delta + \mu\succeq 0$ in \eqref{eq:deltadef}
    
    \begin{algorithmic}[1] 
    \STATE $[U,\Sigma ,V] = \mathbf{svd}(W)$
    \STATE Compute $S$, $C$, $G$,  $F$ as in Th.\ref{th:lowrankupdate}.
    \STATE Initialize $\mu =  \mu_0$
    \WHILE  {$\lambda_{\min}(H_2)<10^{-15}$}
    \STATE Compute $H_2$ as in Th.
    \ref{th:lowrankupdate}.
      \STATE $\mu \leftarrow 2 \mu$
    \ENDWHILE
    \end{algorithmic}
\end{algorithm} 
\end{minipage}
&\hspace{1em}&
\begin{minipage}[t]{.5\textwidth}
\begin{figure}[H]
    \centering
\includegraphics[width=.9\linewidth]{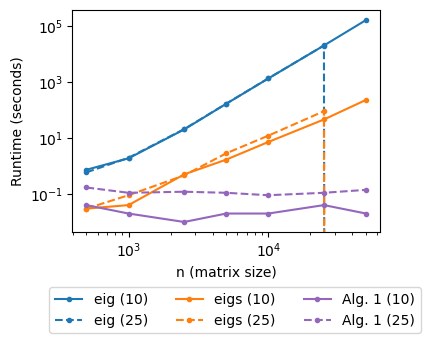}
    \caption{Runtime comparison of full eigenvalue decomposition (eig), sparse iterative eigenvalue solver (eigs), and our method (Alg 1). Legend includes ($q$) value.}
    \label{fig:eig_runtime}
\end{figure}
\end{minipage}
\end{tabular}

\subsection{Enhancements}
In the next few sections, we discuss important enhancements to the method, to improve stability and convergence properties. We refer to updating $H_t = B_t$ as a \textit{direct} update, and  $H_t = B_t^{-1}$ as an \textit{inverse} update.

\subsubsection{$\mu$ correction}
In Theorem \ref{th:lowrankupdate}, note that this choice of $\mu_t$ guarantees that $\Delta_t +\mu_t I$ is PSD, which is a sufficient, but not necessary, condition for $H_{t+1}$ to be PSD (provided $H_t$ is PSD). 
However, often estimating $\mu_t$ this way is overly pessimistic, and can be estimated to be far larger than needed for $B_t + \Delta_t + \mu_t I$ to be PSD. In cases where $\mu_t\to \infty$, this presents numerical stability issues in the inverse update. Moreover, even if $\mu_t$ is simply bounded away from 0, this prohibits superlinear convergence. Therefore, periodically, we use a Lanczos method to estimate $\hat \mu_{t} = \lambda_{\min}(H_t^{-1})$ directly ($O(n^2)$). Then, in iterations $i_t \geq  t$ between these periodic estimates, if $\tilde \mu_{i_t}$ is the output of Alg. \ref{alg:mu}, we use 
\[
\Delta\mu_{i_t} = \min\{\mu_t,\hat\mu_{i_t}\}, \quad 
\mu_{i_t} = \tilde \mu_{i_t} - \Delta\mu_{i_t}, \quad \hat \mu_{i_t+1} = \hat\mu_{i_t} - \Delta\mu_{i_t}.
\]
Essentially, this method occasionally computes the ``surplus PSD'' of $H_t$, and uses it to taper out the updates of $\mu_t$. Specifically, note that $H_t - \bar \mu_t$ is PSD for all $t$. 
This offers a computationally cheap method of producing a diagonal estimate $\mu_t\to 0$ as 
 $x_t\to x^*$.

\subsubsection{$\mu$ rescaling}
As previously mentioned, another downfall of having $\mu_t$ grow too quickly is that the inverse update can become unstable, especially if $\mu_t\to \infty$. 
To mitigate this, a $\mu$-rescaling method modifies the step size to compensate:
\[
x_{t+1} = x_t + \min\{1,\mu_{t-1}^{-1}\} B_{t}^{-1}\nabla f(x_t).
\]
In practice, rescaling helps stabilize the iterates, but can sometimes prevent the speedup of using multiple secants.

\subsubsection{Rejecting vectors}
A key source of instability in all QN methods is the ill-conditioning of the matrices $S_t^\top S_t$ and $S_t^\top Y_t$. This is especially noticable  in minimizing functions with low curvature, because sequential steps often point in the same direction, so $S_t$ quickly becomes nearly low rank. 
In \citet{schnabel1983quasi}, it was proposed to use a \textit{rejection method} to mitigate this problem, by constantly removing secant vectors $s_t$ and $y_t$ to maintain good conditioning of these key matrices. While several methods are offered in \citet{schnabel1983quasi}, we focus on the \textit{inner product rule}
\[
\text{reject }s_t\text{ if }\frac{|s_t^\top s_j|}{\|s_t\|_2\|s_j\|_2}\leq \epsilon, t\neq j.
\]
The rejection is usually done with preferential treatment toward rejecting older vectors, since they are less relevant.

\subsection{Full algorithm}
The full almost-multisecant method is presented in Alg. \ref{alg:ams-qn}. Note that we allow for two variations: direct, where $B_{t+1}$ is updated and inverted at each step, and inverse, where $B_{t+1}^{-1}$ is updated at each step using the Woodbury inversion.

\begin{algorithm}[ht!]
    \caption{Almost multisecant Quasi-Newton (AMS-QN)}
    \label{alg:ams-qn}
    \textbf{Input}: $x_0, \alpha, q, f(x), \nabla f(x)$,  $\mu$-correction period $\nu$\\
    \textbf{Output}: $f_{t+1}(x)$
    \begin{algorithmic}[1] 
        \STATE $B_0 = I$
        \FOR{$k = 1,\dots,T$}
        \STATE Update $S_t$ and $Y_t$ using \eqref{update-sy-1} or \eqref{update-sy-2}, and \eqref{update-bigYS}
         \STATE Reject all violating secant vectors in $S_t$ and correspondingly in $Y_t$

        \STATE Compute $D_1$, $D_2$, $W$ according to the specific QN method 
        \STATE Update 
        
        $ \tilde H = B_t + C_{1,t} A^{-1}_t C_{2,t}^\top $  (direct), or
        
        $\tilde H = B_t^{-1} + D_{1,t} W^{-1}_t D_{2,t}^\top $ (inverse)
        
        and symmetrize $H = \frac{(\tilde H+\tilde H^\top )}{2}$.
        
        \STATE \textbf{Compute $\mu$}: Use Alg. \ref{alg:mu} to pick $ \tilde \mu_t$ such that $H+\tilde \mu_t I$ is PSD.
        \IF {$\mod(k,\nu) == 0$}
        \STATE \textbf{Correct $\mu$}: $\hat \mu = \min(\lambda_t(H))$
        \ENDIF
        \STATE \textbf{Correct $\mu$}: 
        $\Delta \mu = \min(\hat \mu,\mu_t)$, $\hat \mu = \hat \mu - \Delta\mu$, $\mu_t = \tilde \mu_t - \Delta\mu$

        \STATE Update Hessian estimate
        
        $B_{t+1} = H + \mu_t I$ (direct) or 
 
        $B^{-1}_{t+1} = H  + \mu_t I$ (inverse)

        \STATE Update with $\mu$-scaled step size
        $x_{t+1} = x_t - \alpha_t B^{-1}_{t+1}\nabla f(x_t)$  
 where 
 \[
 \alpha_t = \begin{cases} \alpha & \text{ if direct update or no $\mu$-scaling}\\ \min\{\alpha, 1/\mu_t\} & \text{ if inverse update and $\mu$-scaling}\end{cases}
 \]
        \ENDFOR  
    \end{algorithmic}
\end{algorithm}

\section{Superlinear convergence}
\label{sec:superlinear}

We now give the superlinear convergence proof, which extends the well-known results of single-secant BFGS to MS-BFGS with symmetrization and diagonal perturbation. 

\paragraph{Assumptions.}
 Take $F_0 = \nabla^2 f(x_0)$.
The following are assumed : 
\begin{enumerate}
    \item 
    The function $f$ is strongly convex and smooth, and there exists constants $m$ and $M$ such that for all $\xi = F_0^{-1/2} x$, within a relevant local neighborhood of the solution,
    \begin{equation}
    mI \preceq g'(\xi) = F_0^{-1/2}\nabla^2 f(F_0^{-1/2}\xi)F_0^{-1/2} \preceq MI.
    \label{eq:hess_cond}
    \end{equation}

    \item The Hessians are $L$-Lipschitz, such that 
    \begin{equation}
    \|g'(\xi_1)-g'(\xi_2)\|_2\leq L \|\xi_1-\xi_2\|_2 \label{eq:lipschitz_hessian}
    \end{equation}
    Using Lemma \ref{lem:smoothness_vectors},   this implies
\[
\|g(\omega) - g(\tau) - g'(\tau)(\omega-\tau)\|\leq \frac{L }{2   } \|\omega-\tau\|^2.
\]
 \item The diagonal perturbation constant $\mu_t$ is a decaying sequence, such that 
\[
\sum_{t=0}^\infty\mu_t \leq \bar\epsilon :=\min\{1/4,1/(8M)\}
\]
\end{enumerate}

\begin{theorem}[$q$-superlinear conv.]
\label{th:superlinear}
Given the listed assumptions,
\[
\frac{\| B_t  S_t -  S_t\|_F}{\| S_t\|_F} \to 0
\]
which implies $q$-superlinear convergence.
\end{theorem}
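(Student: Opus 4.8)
The plan is to adapt the classical Byrd--Nocedal potential-function proof of superlinear convergence for single-secant BFGS to the symmetrized, diagonally-perturbed multisecant setting, reading the final claim as a block (Frobenius) version of the Dennis--Moré condition. Throughout I would work in the preconditioned coordinates $\xi = F_0^{-1/2}x$ of Assumption (1), and further rescale so that the Hessian $G_* := g'(\xi^*)$ at the solution becomes the identity; this is what makes the target of the secant condition equal $I$ and produces the quantity $\|B_t S_t - S_t\|_F/\|S_t\|_F$ in the conclusion. The preliminary step is to establish local \emph{r}-linear convergence of the iterates $\xi_t \to \xi^*$ from the maintained positive-definiteness of $B_t$ (guaranteed by Theorem~\ref{th:lowrankupdate}) together with the spectral bounds \eqref{eq:hess_cond}. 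Linear convergence gives $\sum_t\|\xi_t-\xi^*\|<\infty$, which is what later makes the error terms summable, and it keeps all secant pairs eventually inside the neighborhood where \eqref{eq:hess_cond} and \eqref{eq:lipschitz_hessian} hold.

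The crux is a recursion for the merit function $\psi(B) = \tr(B) - \ln\det(B) - n$, which is nonnegative and vanishes exactly at $B=I$. Before deriving it, I would control three deviations from the exact-interpolation single-secant picture. First, the \emph{secant error}: by \eqref{eq:lipschitz_hessian} and Lemma~\ref{lem:smoothness_vectors}, each column obeys $\|y_i - G_* s_i\| \le \tfrac{L}{2}\max(\|\xi_{i+1}-\xi^*\|,\|\xi_i-\xi^*\|)\,\|s_i\|$, so $\|Y_t - S_t\|_F$ is small relative to $\|S_t\|_F$ and summable along the trajectory. Second, the \emph{symmetrization error}: since $S_t^\top Y_t \approx S_t^\top G_* S_t$ is nearly symmetric, the asymmetry $\|S_t^\top Y_t - Y_t^\top S_t\|$ removed by the symmetrization step is controlled by the same Lipschitz bound and vanishes near the solution. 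Third, the \emph{perturbation error}: the term $+\mu_t I$ shifts $\tr(B_{t+1})$ by $n\mu_t$ and perturbs the interpolation $B_{t+1}S_t = Y_t$; by Assumption (3) these contributions sum to at most $\bar\epsilon$, which is precisely why summability of $\{\mu_t\}$ is imposed.

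With these estimates, I would derive the block analogues of the BFGS $\tr$ and $\det$ identities and assemble them into an inequality of the form $\psi(B_{t+1}) \le \psi(B_t) + \delta_t - \rho_t$, where $\rho_t \ge 0$ are block cosine/curvature quality terms and $\sum_t \delta_t < \infty$ collects the three error contributions above. Since $\psi(B_t) \ge 0$, telescoping forces $\rho_t \to 0$, i.e. the block cosines and curvature ratios of $B_t$ along the columns of $S_t$ tend to $1$, yielding $\|B_t S_t - S_t\|_F/\|S_t\|_F \to 0$. A Dennis--Moré argument in its Frobenius/block form then upgrades this to $q$-superlinear convergence of $\{\xi_t\}$, and undoing the preconditioning transfers the rate to $\{x_t\}$.

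I expect the block potential recursion to be the main obstacle. In single-secant BFGS the rank-two update gives the clean scalar formulas $\tr(B_{t+1}) = \tr(B_t) - \|B_ts_t\|^2/(s_t^\top B_t s_t) + \|y_t\|^2/(y_t^\top s_t)$ and $\det(B_{t+1}) = \det(B_t)\,(y_t^\top s_t)/(s_t^\top B_t s_t)$; here the rank-$2q$ symmetrized-and-perturbed update needs matrix-valued analogues involving $(Y_t^\top S_t)^{-1}$ and $(S_t^\top B_t S_t)^{-1}$, and these hold only up to the symmetrization and $\mu_t$ errors. Keeping those error terms summable while ensuring that ill-conditioning of $S_t^\top S_t$ and $S_t^\top Y_t$ does not inflate the constants is where the rejection mechanism and full-column-rank assumption on $S_t$ from Section~\ref{sec:perturbation} enter, and is the delicate part of the argument.
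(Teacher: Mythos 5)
Your second stage is essentially the paper's own argument: the potential $\psi(B)=\tr(B)-\ln\det(B)$ in coordinates rescaled so the Hessian at the solution is the identity, block trace and determinant identities built from $(Y_t^\top S_t)^{-1}$ and $(S_t^\top B_t S_t)^{-1}$, error terms from the secant residual, the symmetrization, and the $\mu_t I$ shift that are summable along the trajectory, telescoping to force the nonpositive quality terms to zero, and an AM/GM plus Dennis--Mor\'e step in Frobenius form. This is precisely the route of Lemmas \ref{lem:matdetprop}, \ref{lem:matdetperturb}, \ref{lem:proxy_converge} and Theorem \ref{th:app:superlinear}, including your correct observation that the determinant perturbation forces $\mu_t$ to decay with $\|x_t-x^*\|$ rather than merely be summable in the abstract.

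The genuine gap is your preliminary step. You assert that local linear convergence of $\xi_t\to\xi^*$ follows ``from the maintained positive-definiteness of $B_t$ (Theorem \ref{th:lowrankupdate}) together with the spectral bounds \eqref{eq:hess_cond}.'' It does not. Positive definiteness of $B_t$ yields descent directions, but the iteration analyzed here takes full quasi-Newton steps $x_{t+1}=x_t-B_t^{-1}\nabla f(x_t)$ with no line search, and a PSD $B_t$ whose eigenvalues drift toward $0$ or $\infty$ produces steps that overshoot and diverge; Theorem \ref{th:lowrankupdate} provides no uniform bounds on the spectrum of $B_t$, only that it stays PSD. Everything downstream in your plan --- $\sum_t\|\xi_t-\xi^*\|<\infty$, summability of $\|Y_t-S_t\|_F$ relative to $\|S_t\|_F$, the accounting of the $\mu_t$ contributions --- rests on this unproved contraction. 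The paper spends the larger half of the appendix closing exactly this hole with a multisecant bounded-deterioration induction (Lemmas \ref{lem:onestep-nonsymm} through \ref{lem:linearconv}): it tracks $\|G_t^{-1}-C_t^{-1}\|_{G_t}$ in a weighted Frobenius norm, and the key new ingredient is Lemma \ref{lem:multisecant_asymmproj}, which shows the oblique projector $I-R(Z^\top R)^{-1}Z^\top$ has the same operator norm as $R(Z^\top R)^{-1}Z^\top$ (the multisecant analogue of the classical projection identity), combined with Lemma \ref{lem:multisecant_contraction_angle} to control $p-\omega^2$. Note also that the summability of $\mu_t$ must be woven into that induction itself (the $\epsilon_t=c_5\sum_i\mu_i$ term in item 5 of Lemma \ref{lem:linearconv}); invoking it only in the potential-function stage, as you do, is not enough, since without it the induction establishing $\|C_t^{-1}\|\leq\gamma+\tfrac12$ and the $\tfrac12$-contraction of $\|\xi_t-\xi^*\|$ breaks. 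A secondary correction: the paper's proof nowhere uses the rejection mechanism; it simply assumes $S_t$ has full column rank, so you should not rely on rejection to control constants inside the argument.
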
 
The proof is long and given in Appendix \ref{app:superlinear_conv}, Th. \ref{th:app:superlinear}. The exact statement in Th. \ref{th:app:superlinear} uses scaled variables, but is equivalent to the statement with unscaled variables. The proof structure follows the original structure presented in \citet{dennis1977quasi}, and further expanded in \citet{lui2021superlinear} and \citet{nocedal1999numerical}. 
The key steps to extending to multisecant is Lemma \ref{lem:multisecant_asymmproj}, which characterizes the size of the asymmetric  projection operator. The rest of the linear algebra facts extended more naturally, with a constant overhead factor of $p$ at times. 
Regarding symmetrization, Lemma \ref{app:onestep_contract} demonstrates that, contrary to expectations, it does not affect convergence analysis significantly.
The main difficulty is extending to the PSD perturbation. Notably, if the parameter $\mu_t$ does not decay to 0, it is impossible to achieve superlinear convergence. 
This is in spite of the PSD perturbation being proposed in other works \citep{goldfeld1966maximization}. 
   To overcome this, our two-stage   perturbation of $\mu_t$ is essential to force $\mu_t\to 0$ in such a way that it is summable. 
   Then, initializing close enough to the optimum, we are indeed able to maintain local linear convergence, which is a key step in proving local superlinear convergence. 
Finally, the extension of the linear-to-superlinear convergence from single-secant \citep{nocedal1999numerical} to multisecant requires some manipulations of trace and determinants of $q\times q$ matrices, and the use of the AM/GM inequality, but otherwise follows the standard framework.

\section{Numerical results}
\label{sec:numerical}

We now explore the performance of these methods on unconstrained, smooth, convex, non-quadratic problems which are bounded below. First, we do a deep study into logistic regression problems with variable conditioning, and then we apply the method on a wider array of problems.
All tables include the number of iterations until the stopping condition of 
$\frac{\|\nabla f(x_t)\|}{\|\nabla f(x_0)\| }\leq \epsilon_{\mathrm{tol}}$.

\subsection{Logistic regression}
\label{sec:logistic_regression}
 The logisitc regression problem is defined as 
\begin{equation}
    \displaystyle\min_{x\in \mathbb{R}^n} f(x) = \displaystyle\min_{x\in \mathbb{R}^n}-\frac{1}{p}\sum_{i=1}^{p}\log(\sigma (b_i a_i^\top  x)), \qquad \sigma(x) = \frac{1}{1+e^{-x}}
    \label{eq:logistic_problem_2}
\end{equation}
where $a_i^\top $ is the $i$th row vector in the data matrix $A\in\mathbb{R}^{m\times n}$ and $b_i$ the $i$th element of the label vector $b\in\{-1,1\}^{m}$. 
Here,
\begin{equation}
 A_{i,j} =  b_iz_{i,j}(1-c_j) + \omega z_{i,j} c_j
\end{equation}
where
$c_j = \exp(-\bar c j/n)$ is the data decay rate (decaying influence of each feature), and 
$z_{i,j} \sim \mathcal N(0,1)$ Gaussian distributed i.i.d. $\omega$ controls the signal to noise ratio of the data, and the labels $b_i \in \{1,-1\}$ with equal probability (class balanced). In appendix \ref{app:extranumerics:logreg}, we experiment with both a high and low signal model.

Figure \ref{fig:ablation-baselines} illustrates the destructive effect of multisecant QN methods when applied to convex problems. 
Note the trade-off between computational efficiency and numerical conditioning; while in both cases multisecant methods suffer stability issues, the inverse update (inv case) is more debilitating. 

Figure \ref{fig:ablation-ours} compares the performance of enhancements for the multisecant methods, on a difficult (ill-conditioned with high signal) problem, where only inverse updates are used. We explore the effects of symmetrization, PSD projection (infeasible in practice), and our diagonal perturbation, with and without vector rejection. 
There are two clear observations. First, as demonstrated in Figure \ref{fig:ablation-baselines}, the Woodbury inverse update, despite its instability, can sometimes suddenly converge to points with low gradient norms. Second, our approach—particularly with the rejection mechanism—demonstrably enhances the existing methods. While it does not guarantee stability, it appears to improve the situation, and overall reduces the time until convergence.

Figure \ref{fig:ablation-leftover}
gives a closer comparison of the three extra techniques: PSD correction, in which $\mu_t\to 0$ by occasionally recomputing the smallest eigenvalue of $B_t$ or $B_t^{-1}$; scaling, e.g. $d_t = \mu_t^{-1}(B_{t,\text{symm}}+\mu_tI)$ whenever $\mu_t>1$; and rejection. Although PSD correction is essential for the convergence results, it does not really have noticeable positive effect in the numerics, and moreover causes the most overhead. Scaling helps sometimes, but not consistently. The most significant improvement is through rejection. These observations are also reflected in the more extensive tables, to be presented next.

\begin{figure}[ht!]
    \centering
\includegraphics[width=\linewidth]{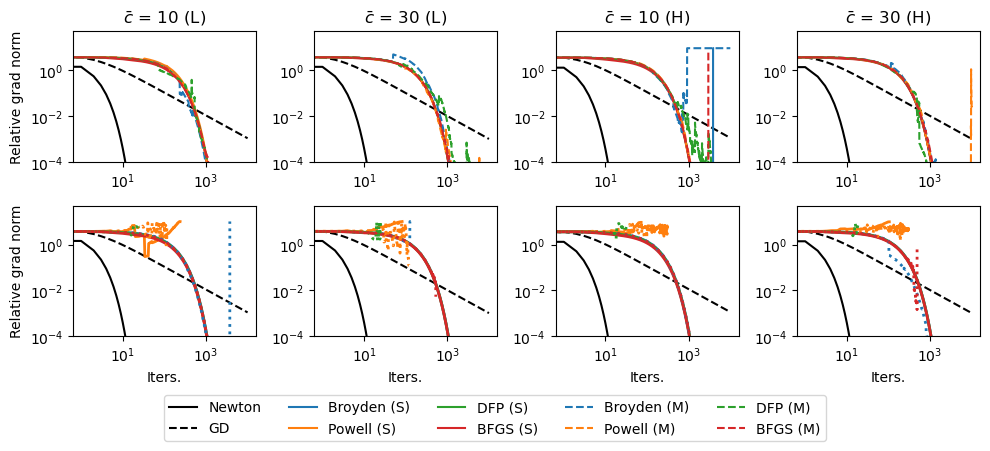}
    \caption{Comparison of Newton, gradient descent (GD), single-secant QN methods (S), and multi-secant QN methods (M) on logistic regression with $m = 200, n = 100, q = 5$. \textbf{Top}: direct solve. \textbf{Bottom}: Woodbury inverse.  Both high (H) signal and low (L) signal regime problems are tested. }
    \label{fig:ablation-baselines}
\end{figure}

\begin{figure}[ht!]
    \centering
\includegraphics[width=\linewidth]{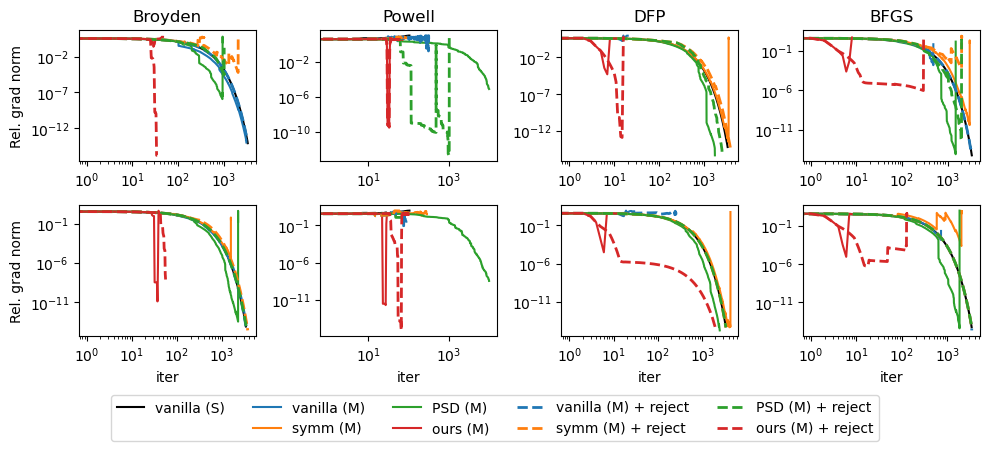}
 \caption{Comparison of QN method improvements, including symmetrization, PSD projection, and our simple diagonal boost. The problem sizes are  $m=200,n=100$ and $q=5$ for multisecant methods. All are using Woodbury inverse update.  \textbf{Top}: secants built using curve-hugging. \textbf{Bottom}: secants built using  anchored at most recent. The problem is $\bar c = 30$ (H).    }
    \label{fig:ablation-ours}
\end{figure}

\begin{figure}[ht!]
    \centering
\includegraphics[width=\linewidth]{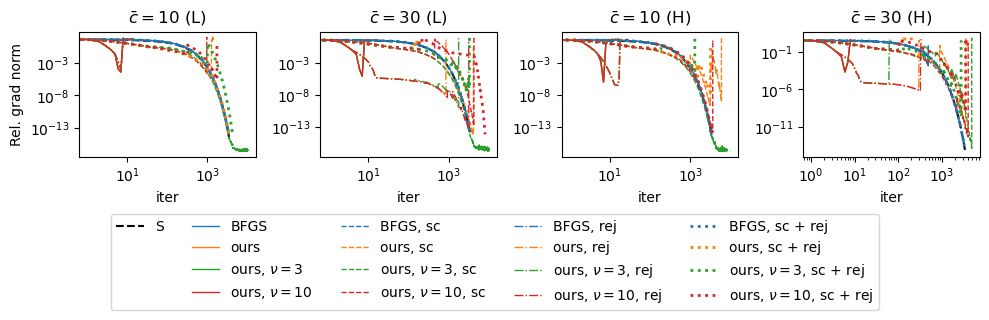}
 \caption{  Ablation of several techniques: PSD correction ($\nu > 0$), scaling, and rejection.    The problem sizes are  $m=200,n=100$ and $q=5$ for multisecant methods.   }
    \label{fig:ablation-leftover}
\end{figure}

Table \ref{tab:logreg_main} gives the number of iterations $\bar t$ to reach $\epsilon_{\mathrm{tol}} = 10^{-4}$. In each of these tables, the best result in each problem is bold. If the best result is PSD projection, which is unrealistic, it is marked by (*) and the second best score is also bold. 

There were several factors which were not numerically significant. We did not observe much performance difference in using curve-hugging \eqref{update-sy-1} or anchor-at-recent \eqref{update-sy-2} for secant updates. We also did not observe significant benefit to driving $\mu_t\to 0$ in practice ($\mu$-correction), nor of $\mu$-scaling. Almost all good results happened with inverse updates rather than direct updates. Extended tables include more ablations, and are in Appendix \ref{app:numerical}.

\begin{table}[ht!]
{\small
\begin{center}
\begin{tabular}{|l||cc|cc||l|cc|cc|}
\hline
 & \multicolumn{2}{c|}{$\bar c = 10$}  	& \multicolumn{2}{c||}{$\bar c =  30$} 	&& \multicolumn{2}{c|}{$\bar c = 10$}  	& \multicolumn{2}{c|}{$\bar c =  30$} 		\\
&cu 	&an 	&cu 	&an &&cu 	&an 	&cu 	&an 	\\\hline
Newton's	&  11	 &  11	 &  11	 &  11&Grad. Desc. 	&2051	 &2051	 &2010	 &2010	 \\\hline
Br.* (1)        	& 520	 & 520	 & 513	 & 513	 & Pow.* (1)       	& 532	 & 532	 & 529	 & 529	\\
Br. (1)       	& 520	 & 520	 & 513	 & 513&Pow. (1)       	& Inf	 & Inf	 & Inf	 & Inf		\\
Br. (v) 	& 558	 & 507	 & 471	 & 593	 &Pow. (v)	& Inf	 & Inf	 & Inf	 & Inf\\
Br. (v,r)& 505	 & 521	 & 502	 & 514	&Pow. (v,r)	&  Inf	 & Inf	 & Inf	 & Inf\\
Br. (s) 	& Inf	 &2122	 & 903	 & 559&	Pow. (s)	 & Inf	 & Inf	 & 407	 & 308	\\
Br. (s,r)	&1000	 & 631	 &2025	 & 712	&Pow. (s,r)	& Inf	 & Inf	 & Inf	 & Inf	\\
Br. (p)	& 425	 & 454	 & 144	 &   \textbf{6*}	&Pow. (p)	& 537	 &1822	 & 367	 & 377	 \\
Br. (p,r)	& Inf	 & 631	 & Inf	 & 677	&Pow. (p,r)	&2002	 & Inf	 & 465	 & 754\\
Br. (o)	&  \textbf{21}	 &   \textbf{21}	 &   \textbf{8}	 &  Inf&	Pow. (o)	&   8	 &  Inf	 &   8	 &   \textbf{6} \\
Br. (o,r)	            & 119	 & 599	 &    \textbf{8}	 & 602	&  Pow. (o,r)	&   \textbf{7}	 &   \textbf{7}	 &   \textbf{7}	 &   7	\\
Br. (o,32)	&  \textbf{21}	 &  \textbf{21}	 &    \textbf{8}	 &  Inf	&Pow. (o,32)	&   8	 &   Inf	 &   8	 &   \textbf{6}	\\
 Br. (o,32,r)	& Inf	 & 599	 &    \textbf{8}	 & 602	&Pow. (o,32,r)	&   \textbf{7}	 &   \textbf{7}	 &   \textbf{7}	 &   7\\
\hline
DFP* (1)       	& 504	 & 504	 & 500	 & 500	 &BFGS* (1)       	& 502	 & 502	 & 498	 & 498	\\
DFP (1)       	& 504	 & 504	 & 500	 & 500	&BFGS (1)       	& 502	 & 502	 & 498	 & 498 \\
DFP (v)	& Inf	 & Inf	 & Inf	 & Inf	& 	BFGS (v) 	& 499	 & 502	 & 500	 & 502\\
DFP (v,r)	& Inf	 & Inf	 & Inf	 & Inf&	BFGS (v,r)& 502	 & 503	 & 500	 & 501\\
DFP (s)	& 530	 & 513	 & 524	 & 513	 &	BFGS (s) 	& 530	 & 539	 & 926	 &1151 \\
DFP (s,r)	& 760	 & 511	 & 548	 & 510	& 	BFGS (s,r)& 884	 & 507	 & 588	 & 505\\
DFP (p)	& 425	 & 708	 & 434	 & 369	 	&BFGS (p) 	& 265	 & 268	 & 152	 & 183	\\
DFP (p,r)	& 703	 & 511	 & 438	 & 501	&BFGS (p,r)& 665	 & 507	 &1006	 & 505	 \\
 DFP (o)	&   \textbf{7}	 &   \textbf{7}	 &   \textbf{6}	 &   \textbf{6}	&BFGS (o) 	&   \textbf{5}	 &   \textbf{5}	 &   \textbf{5}	 &  \textbf{5} \\
 DFP (o,r)	&   Inf	 &  12	 &   \textbf{6}	 &  12		& BFGS (o,r)&  10	 &  10	 &  10	 &  10	\\
 DFP (o,32)	& Inf	 & Inf &   \textbf{6}	 &   \textbf{6}	&BFGS (o,32)&   \textbf{5}	 &   \textbf{5}	 &   \textbf{5}	 &   \textbf{5}\\
 DFP (o,32,r)	&   Inf	 &  12	 &   \textbf{6}	 &  12	& BFGS (o,32,r)	&  10	 &  10	 &  10	 &  10	\\ 
\hline
\end{tabular}
\end{center}
}
\caption{\textbf{LogReg results summary.} Number of iterations with $\epsilon_{\mathrm{tol}} = 10^{-4}$. $q = 5$ multisecant vectors. Inf = more than 10000 iterations, or diverged. $\sigma = 10, m = 2000, n = 1000$.       None use $\mu$-scaling. * = direct update, all else are inverse updates. 1 = single secant, v = vanilla, s = symmetric, p = PSD projection, o = ours, r = rejection used, with tolerance 0.01. cu = curve fitting, an = anchored at most recent. The number refers to $\nu$, in $\mu$-correction. A more extensive table can be found in Appendix \ref{app:extranumerics:logreg}.}
\label{tab:logreg_main}
\end{table}

 \subsection{$p$-order minimization}
 In Table \ref{tab:pnormraised_main_2.5}, we investigate an important problem in robust optimization 
 \[
f(x) =\frac{1}{2m} \|Ax - b\|^p_p, \qquad p>1.
\]
Here, we generate the data as 
\[
Z_{i,j}\sim \mathcal N(0,1), \quad W_{i,j}\sim \mathcal N(0,1), \quad x_{j}\sim \mathcal N(0,1), \quad i = 1,...,m,\;j = 1,...,n
\]
and 
\[
\tilde A_{i,j} = Z_{i,j} c_j, \quad A = \frac{\tilde A}{ \|\tilde A\|_2},  \quad b = \frac{Ax + \sigma N}{ \|Ax + \sigma N\|_2}.
\]
The normalization steps are used to control the signal-to-noise ratio, and so that the same step size can be applied for all values of $m$, $n$, $\sigma$, etc. 

\begin{table}[ht!]
{\small
\begin{center}
\begin{tabular}{|l||ccc|ccc|}
\hline
&\multicolumn{3}{c|}{Medium noise ($\sigma = 1$)}&\multicolumn{3}{c|}{Low noise ($\sigma = 0.1$)}\\
  & $\bar c = 10$  	&$\bar c =  30$ 	&$\bar c =  50$ 	&$\bar c = 10$ 	&$\bar c =  30$ 	&$\bar c =  50$ 	\\\hline
Newton's 		 &5190	 &5178	 &5239	 &5228	 &5201	 &5255\\
Grad. Desc. 		 & Inf	 & Inf	 & Inf	 & Inf	 & Inf	 & Inf\\\hline
Br. (d,S)        		 & Inf	 &9670	 & Inf	 & Inf	 & Inf	 & Inf\\
Br. (d,v)		 &2197	 & 669	 & \textbf{428}	 & 737	 & 638	 & 837\\
Br. (d,s)		 &8542	 &3460	 &4897	 &7600	 &6288	 &3614\\
Br. (d,o)		 & Inf	 &4752	 &3798	 &7360	 &3426	 &4904\\
Br. (i,v)		 & \textbf{1549}	 & Inf	 &1379	 &1699	 &3903	 & 682\\
Br. (i,s)	 &2393	 &1083	 & 468	 & \textbf{560}	 & \textbf{531}	 & \textbf{486}\\
Br. (i,p)		 &9475	 &4280	 & Inf	 & Inf	 &4273	 &3889\\
Br.* (i,o) 		 & Inf	 &9856	 &8819	 &9910	 &8139	 & Inf\\
Br.* (i,o,22)		 &1860	 & \textbf{504}	 &3180	 &1475	 &1396	 &3653\\
Br.* (i,o,500)		 & Inf	 & Inf	 &7836	 &4707	 &7015	 &9680\\\hline
Pow. (d,v)		 & Inf	 &1158	 & Inf	 & Inf	 & 951	 & Inf\\
Pow. (d,s)		 &\textbf{4216}	 &2811	 &3901	 &6321	 &\textbf{3493}	 &\textbf{1868}\\
Pow. (d,o)		 &6100	 &\textbf{2588}	 &\textbf{3266}	 &5193	 &3814	 &3063\\
Pow. (d,o,$22$)		 &5183	 &2884	 &4511	 &\textbf{3943}	 &4482	 &3774\\
Pow. (d,o,$500$)	 &4336	 &3267	 &3883	 &4039	 &3155	 &3492\\\hline
DFP (i,s)		 & 512	 & 589	 & 495	 & 505	 & 506	 & 489\\
DFP (i,p)		 &7189	 &2324	 &2156	 &8332	 &4935	 &4825\\
DFP* (i,o)		 & \textbf{490}	 & 477	 & 479	 & 493	 & 480	 & 487\\
DFP (i,o)		 &3608	 & \textbf{465}	 & \textbf{470}	 & \textbf{479}	 & \textbf{462}	 & 486\\
DFP* (i,o,$22$)	 & 499	 & 497	 & 478	 & 493	 & 486	 & \textbf{475}\\
DFP (i,o,$22$)		 &1083	 & 542	 & \textbf{470}	 & 482	 & \textbf{462}	 & 479\\
DFP* (i,o,$500$)	 & 490	 & 477	 & 479	 & 493	 & 480	 & 487\\
DFP (i,o,$500$)	 &5673	 & \textbf{465}	 & \textbf{470}	 & \textbf{479}	 & \textbf{462}	 & 486\\
\hline
BFGS (d,v)		 & 511	 & Inf	 & 814	 &1141	 & \textbf{469}	 & 488\\
BFGS (d,s)		 &1097	 & 749	 &2294	 & 510	 & 553	 &4273\\
BFGS (d,o)		 & 614	 & 641	 &1951	 & Inf	 &4516	 & Inf\\
BFGS (i,v)		 & \textbf{459}	 &1190	 & 457	 & 472	 & 513	 &4122\\
BFGS (i,s)		 & 488	 & 823	 & 608	 & 776	 & 484	 & 502\\
BFGS (i,p)	 & 462	 & 485	 & \textbf{451}	 & \textbf{437}	 & 513	 & 805\\
BFGS* (i,o)		 & 536	 & \textbf{477}	 & 488	 & 634	 & 775	 & 512\\
BFGS (i,o)		 & 795	 & Inf	 & 466	 & 475	 & 480	 & Inf\\
BFGS* (i,o,$22$)		 & 497	 & \textbf{477}	 & 476	 &1658	 & 491	 & 573\\
BFGS (i,o,$22$)	 & 491	 & 539	 & 495	 & 564	 & 960	 & \textbf{479}\\
BFGS* (i,o,$500$)		 & 536	 & \textbf{477}	 & 488	 & 635	 & 775	 & 512\\
BFGS (i,o,$500$)		 & 796	 & Inf	 & 466	 & 475	 & 480	 & Inf\\
\hline\hline
\end{tabular}
\end{center}
}
\caption{\textbf{$p$ order minimization, $p = 2.5$.}   Number of iterations with $\epsilon_{\mathrm{tol}} = 10^{-2}$.  $q = 5$ multisecant vectors. Inf = more than 10000 iterations, or diverged. $m = 1000, n = 500$.  Lines were removed if they were all divergent, or not competitive based on similar variations.  
* = uses $\mu$-scaling. 
d = direct update, i = inverse update, 1 = single secant, v = vanilla, s = symmetric, p = PSD projection, o = ours, r = rejection used, with tolerance 0.01.
The number refers to $\nu$, in $\mu$-correction.
A more extensive table can be found in Appendix \ref{app:extranumerics:pnormraised}.}

\label{tab:pnormraised_main_2.5}
\end{table}

Table 
\ref{tab:pnormraised_main_2.5}
gives the number of iterations   to reach $\epsilon_{\mathrm{tol}}=10^{-3}$ for $p$-order minimization, $p = 2.5$. We also include experiments for $p = 1.5$ and $p = 3.5$ in Appendix \ref{app:extranumerics:pnormraised}.
Many ablations are not consistent; sometimes $\mu$-correction was essential to give convergence; other times it was not necessary, but not harmful (with a few exceptions). $\mu$-scaling also helped most of the time; in contrast, in the previous experiments (Tab. \ref{tab:logreg_main}) it prevented ``surprisingly fast" convergences.  We also include PSD convergence results which in some cases were competitive, but in most cases were not, showing that the conditions  of a descent direction, and of well-conditioning of the Hessian estimate, are both needed for good convergence. Overall, however, we conclude that a multisecant approach significantly enhances convergence speed, and diagonal perturbation often enables convergence in cases that would otherwise diverge.

\subsection{Cross-entropy loss}
Finally, we consider the cross-entropy loss function, commonly used in multiclass logistic regression in machine learning. Here, $x\in \R^{n\times n_c}$, where $n_c$ is the number of classes. Then, for data and labels generated as
\[
Z_{i,j}\sim \mathcal N(0,1), \quad W_{i,k}\sim \mathcal N(0,1), \quad x_{j,k}\sim \mathcal N(0,1),
\quad
\tilde A = Z_{i,j} c_j, \quad A_{i,j} = \frac{\tilde A}{ \|\tilde A\|_2}
\]
for $i = 1,...,m$, $j = 1,...,n$, $k = 1,...,n_c$
and for $a_i$ and $x_k$ the $i$th and $k$th column of $A$ and $X$,
\[
b_i = \argmax{k=1,...,n_c} a_i^\top x_k + \sigma W_{i,k}.
\]
Then the cross-entropy loss function is
 \[
f(X) = -\sum_{i=1}^m  a_i^\top x_{b_i} + \log \left( \sum_{j=1}^m e^{a_i^\top x_j} \right)
\]

\begin{table}[ht!]
{\small
\begin{center}
\begin{tabular}{|l||ccc|ccc|}
\hline
&\multicolumn{3}{c|}{High noise ($\sigma = 1.0$)}&\multicolumn{3}{c|}{Medium noise ($\sigma = 0.1$)}\\
 & $\bar c = 10$  	&$\bar c =  30$ 	&$\bar c =  50$ 	&$\bar c = 10$ 	&$\bar c =  30$ 	&$\bar c =  50$ 	 \\\hline
Grad. Desc.	& Inf	 & Inf	 & Inf	 & Inf	 & Inf	 & Inf	\\\hline

Br. (i,s) 	&\textbf{1006}	 &9679	 &4566	 &\textbf{1052}	 & Inf	 & Inf	\\
Br. (i,o,s,10) 	&1435	 &\textbf{8457}	 &\textbf{4519}	 &5284	 &\textbf{3089}	 & Inf	 \\\hline
  
DFP (i,s) 	&1206	 & Inf	 & Inf	 & Inf	 & Inf	 & Inf	 \\
DFP (i,o,s) 	&2924	 & Inf	 & Inf	 &3727	 &3469	 & Inf	\\
DFP (i,o,s,10) 	&\textbf{1037}	 & Inf	 & Inf	 & \textbf{852}	 & \textbf{917}	 & Inf	\\
DFP (i,o,10) 	& Inf	 & Inf	 & Inf	 & Inf	 & Inf	 & Inf	 \\
DFP (i,o,s,100) 	&2064	 & Inf	 &\textbf{8840}	 &1362	 & 972	 &\textbf{1315}	 \\\hline
BFGS (d,1)       	& Inf	 & Inf	 & Inf	 & Inf	 & Inf	 & Inf	 \\
BFGS (d,v)	& 817	 & Inf	 &\textbf{1177}	 & \textbf{681}	 &1035	 & Inf	 \\
BFGS (d,s)	&1093	 & Inf	 &6714	 & Inf	 & Inf	 & Inf	 \\
BFGS (d,o)	&1494	 & Inf	 &9609	 &1497	 &2012	 & Inf	 \\
BFGS (d,o,10)	&1153	 &7282	 &9609	 &1497	 &2683	 & Inf	\\
BFGS (d,o,100)	& Inf	 & Inf	 &9609	 &1497	 &2168	 & Inf	 \\
BFGS (i,v) 	& \textbf{666}	 &\textbf{3069}	 &1907	 & 691	 & \textbf{830}	 & Inf	 \\
BFGS (i,v,r)	& Inf	 & Inf	 & Inf	 & Inf	 & Inf	 & Inf	\\
BFGS (i,s) 	&1296	 &5729	 &2523	 &1001	 &1471	 & Inf	\\
BFGS (i,p) 	&1415	 &5838	 &3049	 &1109	 &1220	 & Inf	 \\
BFGS (i,o,s) 	&6664	 & Inf	 & Inf	 &4435	 &5581	 &\textbf{9759}	\\
BFGS (i,o,s,10) 	&1303	 & Inf	 &2649	 & Inf	 &1170	 & Inf	\\
BFGS (i,o,s,100) 	& Inf	 & Inf	 &2565	 &2244	 & Inf	 & Inf	\\
BFGS (i,o,s,r)	&3251	 & Inf	 & Inf	 &2768	 &4816	 & Inf	 \\
BFGS (i,o,s,100,r)	&5830	 & Inf	 & Inf	 &4750	 & Inf	 & Inf	 \\
\hline

\hline
\end{tabular}
\end{center}
}
\caption{\textbf{Cross entropy loss summary.} Number of iterations with $\epsilon_{\mathrm{tol}} = 10^{-3}$.  $q = 5$. Inf = more than 10000 iterations. $m = 200, n = 100, n_c = 10$.   Some lines where no experiments converged or were competitive were removed. * = uses $\mu$-scaling. 
d = direct update, i = inverse update, 1 = single secant, v = vanilla, s = symmetric, p = PSD projection, o = ours, r = rejection used, with tolerance 0.01.
The number refers to $\nu$, in $\mu$-correction.
A more extensive table can be found in Appendix \ref{app:extranumerics:mcloss}.}
\label{tab:mcloss_main}
\end{table}

 Table \ref{tab:mcloss_main}
 gives the number of iterations to reach $ \epsilon_{\mathrm{tol}} = 10^{-3}$. Many of the experiments did not converge; for example, none of the Powell variations, or the direct update variations for Broyden or DFP converged. In comparison, BFGS is much more stable across the board. 
 While in many cases, a vanilla or plain symmetrized version seems strong, there are also cases where our update, coupled with $\mu$-correction, $\mu$-scaling, and rejection, is competitive. 
 
 Overall, the multiclass cross-entropy problem served to be a far more difficult problem than its related counterpart, binary logistic regression. This is partially due to the block-diagonal structure of the Hessian, which seems to worsen conditioning. This also resulted in Newton's method being significantly slower for this problem, which is why we did not run it. (Note, however, that gradient descent is not much better.) 
 
\subsection{Discussion} 

From numerical experiments, we draw several conclusions. When tackling difficult problems (e.g., ill-conditioned Hessians, extreme SNR values common in real-world applications), gradient descent and Newton's method struggle significantly. 
Gradient descent requires many iterations to converge, though its complexity-per-iteration is comparable to that of the QN methods when memory is cheap. 
Newton's method noticeably requires fewer iterations, but that too can depend on problem conditioning; this is observed not only in a longer iteration complexity, but also in the   time required for each direct solve step  to complete within a tolerable precision. 
Additionally, there is almost always a marked improvement from using a single-secant QN method to a multisecant QN method in these problem settings, underscoring the value of developing multisecant QN methods.

 The case to improve MS-QN methods is now clear and well-motivated; in particular, as previously discussed, the quality of ``descent direction" does not carry over for MS-QN methods for general convex problems.  Yet curiously, this does not seem to consistently  hamper performance; in particular, Broyden's method seems to function well in vanilla form. Powell's method, on the other hand, is the most often unstable method, in both the inverse and direct update scenarios. 
 The BFGS method is overall the strongest method, and seems indeed improvable using our diagonal perturbation, though of varying degrees.

One unsatisfying aspect in this study is that the effects of the various improvements ($\mu$-correction, $\mu$-scaling, and rejection method) do not appear to offer consistent improvements. Finding a definitive solution for each problem setting remains elusive, though an adaptive approach—testing improvements and selecting the best at each step—might be promising.

\section{Limited memory multisecant BFGS}
\label{sec:extension}
For very large problems, the proposed QN methods become computational infeasible, even in their inverse update form (which avoids solving linear systems). In this case, even storing a dense $n\times n$ matrix is prohibitive.
Therefore, the limited memory extension is essential for this level of scalability. 
The general idea is to approximate $B_{t+1}^{-1} g_t$ using only the past $L$ terms $(s_i,y_i)$, $i = t,t-1,...,t-L+1$ in the single-secant methods, and $(S_i,Y_i)$, $i = t,t-1,...,t-L+1$ in the multisecant methods.
This is achieved via the approximation that $B_{t-L} = I$. 

Limited memory versions of QN methods have been previously studied \citep{van2005limited,reed2009broyden,kolda1997limited,erway2015efficiently} with the most popular the  L-BFGS method \citep{zhu1997algorithm}, which takes advantage of the specific form of BFGS to form a memory-optimized two-loop algorithm. 
To extend limited memory to general QN methods, one direct approach is to simply recompute the intermediate matrices $B^{-1}_iY_j$ and $B^{-1}_iS_j$ for all $i,j = t-L+1,...,t$, and use them to progressively build an approximate $B_t^{-1}\nabla f(x_t)$. 

However, our own experiments showed that such direct implementations were so numerically unstable that in general, picking $q = 1$ (single-secant) and (surprisingly) $L=1$ was always best. The exception to this observation is the multisecant L-BFGS method, where by using the well-known two-loop update strategy, the conditioning of the iterates was stable enough such that larger values of $q$ and $L$ indeed indicated speedups. 
Therefore, we focus on this specific extension; even then, often the $L = 1$ extension is the most stable.

\paragraph{Two loop L-BFGS.}
The key to the two-loop L-BFGS iteration  \citep{liu1989limited} is the fact that the inverse update can be written in a specific factored form. Specifically, when $Y_t^\top S_t$ and $Y_t^{-1}B_t^{-1}Y_t$ are invertible, then 

\begin{equation}
B_{t+1}^{-1}  
= (I-S_t(Y_t^\top S_t)^{-1} Y_t^\top ) B^{-1}_t(I-\underbrace{Y_t(S_t^\top Y_t)^{-1} S_t^\top }_{:=V_t}) - \underbrace{S_t(S_t^\top Y_t)^{-1} S_t^\top }_{\text{tail term:} R_tZ_t^\top }.\label{eq:lm-bfgs-update}
\end{equation} 
where $R_t$, $Z_t$ are easy to precompute, and $V_t\zeta$ is easy to apply. \footnote{These only require the terms $S_t$ and $Y_t$, and inverses of  $q\times q$ matrices.}
Then, defining for $i < j$, 
\[
A_{i,j} =  (I-V_i)(I-V_{i+1})\cdots (I-V_j)
\]
and
$q_{t+1} = g_t$, $q_i = (I-V_i) q_{i+1} = A_{i,k}g_t$ and
rolling out the iterates, 
\begin{multline*}
    H_{t+1} g_t = A^\top _{t-L+1,k}q_{t-L+1}      + A^\top _{t-L+2,k} R_{t-L+1}Z_{t-L+1}^\top q_{t-L+2,k}\\
     +  \cdots   +  (I-V_t) R_{t-1}Z_{t-1}^\top  q_t + R_tZ_t^\top g_t.
\end{multline*}
So, we may recursively define 
    \begin{eqnarray*}
    u_{t-L+1} &=& (I-V_{t-L+1})\gamma q_{t-L+1} + R_{t-L+1}Z_{t-L+1}^\top q_{t-L+2},\\
    u_{i+1} &=& (I-V_i)u_i + R_iZ_i^\top q_{i+1}
    \end{eqnarray*}
   where $B_{t-L+1}^{-1} = \gamma I$  and $H_{t+1}g_t = u_t$.
Furthermore, to avoid holding onto the $L$ vectors, it is custom to compute and save $a_i = Z_{i-1}^\top q_i$, and simply update $  u_{i+1} = (I-V_i)u_i + \alpha_{i+1} Z_i$.
Since $V_t$ is always a rank-$q$ matrix, then this gives a two-loop recursion that, by first computing $q_{t+1},...,q_{t-L}$, and then $u_{t-L},...,u_t$, we arrive at $H_{t+1}g_t$ without ever forming an $n\times n$ matrix, using $O(Lqn)$ operations.


 This two-loop implementation significantly reduces the amount of precompute and memory required at each iteration, from $O(q^3 L^2)$ to $O(q^3 L)$ precompute, and from $O(q n L^2)$ to $O(qnL)$  memory. However, it relies on the factored form $B_{t+1}^{-1}  = (I-U_t)B_t^{-1}(I-V_t^\top ) + R_tZ_t^\top $ where $U_t$, $V_t$, $R_t$, and $Z_t$ are low rank and do not depend on $B_t$. As was observed in previous works \citep{kolda1997limited}, the other QN methods do not seem to reduce to such convenient structure.

\paragraph{Almost multisecant L-BFGS.} As previously stated, forming a diagonal perturbation based on $D_{1,t}$, $W_t$, and $D_{2,t}$, as is done in the full-memory case, presents numerical instabilities in the limited memory case. Therefore, we modify our diagonal perturbation to only focus on the ``tail term" in \eqref{eq:lm-bfgs-update}, which only uses the most recent multisecant matrices $S_t$ and $Y_t$. The full algorithm is provided in Alg. \ref{alg:lim-ams-qn}.

\subsubsection{$\gamma$-scaling}
This scaling method (often called \textit{self-scaling}) for BFGS \citep{oren1974self} is a numerical method often used to attempt to contain the eigenvalues of the update matrix $H_k$. Specifically, adjusted for multisecant updates, the update, for $d_t = B_tS_t$ is 
\[
B_{t+1} = \gamma_t\left(B_t- B_tS_t(S_t^TB_tS_t)^{-1}S_t^\top B_t\right) +  Y_t(Y_t^TS_t)^{-1}Y_t^\top
\]
where the unscaled BFGS sets $\gamma_t = 1$ and the scaled one uses $\gamma_t = y_t^\top s_t/s_t^\top H_ts_t$. We find that for our experiments, this choice of $\gamma_t$ did not provide consistent improvements in numerical stability, but picking a constant $\gamma_t$ sometimes did.

\begin{center}
 
\begin{tabular}[t]{cc}
\begin{minipage}[t]{.43\textwidth}

\begin{algorithm}[H]
    \caption{One step L-MS-BFGS}
    \label{algo:lim_hg_bfgs}
    \textbf{Input}: $g_t$, $\gamma$, $V_j,R_j,Z_j$,  \\ 
    \hspace*{1em} for $j\in \{t-L+1,...,t\}$\\
    \textbf{Output}:  $d_t = B^{-1}_{t+1} g_t$ 
    
    \begin{algorithmic}[1] 

        \STATE $q=g_t$
        \FOR{$i = t,t-1,\dots,t-L+1$}
            \STATE $a_{i+1} = Z_i^\top q$
            \STATE $q= (I-V_i)q$
        \ENDFOR  
        
        \STATE $j = t-L+1$ 
        \STATE $u =   (I-V^\top _{j})\gamma q + R_{j}a_{j}$,

        \FOR{$i = t-L+2, \dots,t$}
            \STATE $u  = (I-V^\top _i)u + R_{i}a_{i+1}$
        \ENDFOR
        \RETURN $d_t = u$
    \end{algorithmic}
\end{algorithm}

\end{minipage}
&
\begin{minipage}[t]{.5\textwidth}

\begin{algorithm}[H]
    \caption{AMS-QN}
    \label{alg:lim-ams-qn}
    \textbf{Input}: $x_0, \alpha, p, f(x), \nabla f(x)$\\
    \textbf{Output}: $f_{t+1}(x)$
    \begin{algorithmic}[1] 
        \FOR{$k = 1,\dots,T$}
        \STATE Update $S_t$ and $Y_t$ using \eqref{update-sy-1} , \eqref{update-sy-2},  \eqref{update-bigYS}
         \STATE Reject violating secant vectors in $S_t$,$Y_t$

        \STATE Update $d_t = B_{t+1}^{-1} \nabla f(x_t)$ using Alg.  \ref{algo:lim_hg_bfgs}.

        \STATE Using 
        $D_{1,t} = D_{2,t} = S_t$,  $W_t=-S_t^\top Y_t$, use Alg. \ref{alg:mu} to pick $\tilde \mu_t$ so that $H+\tilde \mu_t I$ is PSD.

        \STATE Update with $\mu$-scaled step size
        \[
        x_{t+1} = x_t - \alpha(d_t + \mu_t\nabla f(x_t))
        \]
        \ENDFOR  
    \end{algorithmic}
\end{algorithm}

\end{minipage}

\end{tabular}

\end{center}

\begin{figure}[ht]
    \centering
    \includegraphics[width=\linewidth]{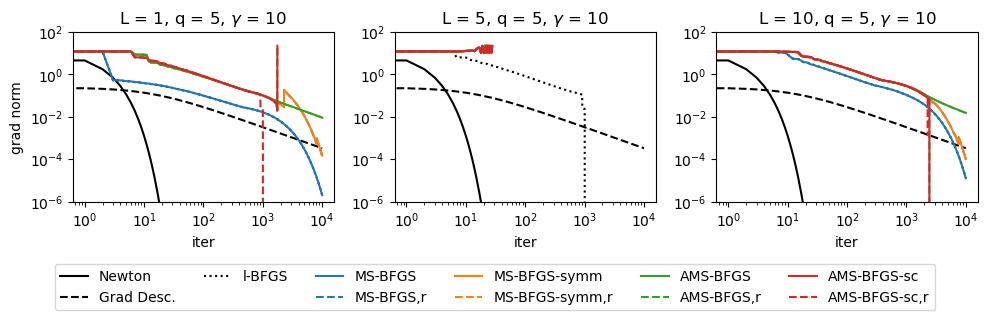}\\
    \includegraphics[width=\linewidth]{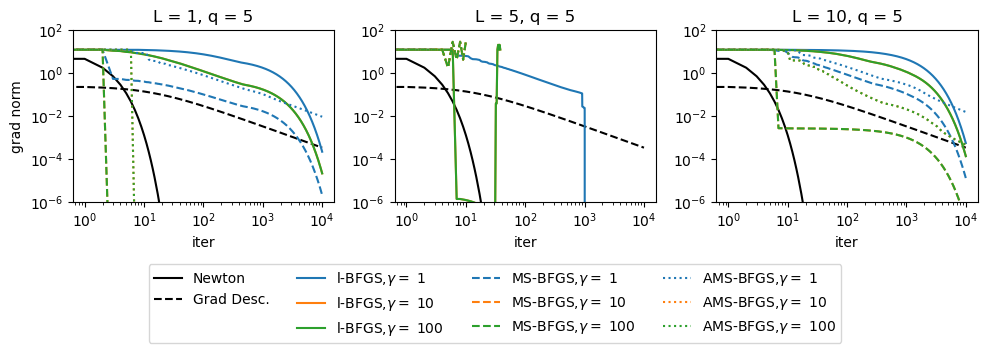}
    \caption{Performance of L-MS-BFGS on logistic regression. AMS = almost multi-secant (our method). \textbf{Top.} r = rejection.   \textbf{Bottom.} no rejection or scaling used. The problem sizes are  $m=2000,n=1000$.}
    \label{fig:lim_memory_logreg}
\end{figure}

Figure \ref{fig:lim_memory_logreg} shows the performance of the limited memory MS-BFGS method on the logistic regression problem. Stability is a critical issue, especially for larger $L$, to the point that the for lower precision solutions, gradient descent is clearly superior. However, for high precision solutions, a quasi-Newton method is still advantageous. Here, use of the  improvements (rejection, $\mu$-scaling, and $\gamma$-scaling) play a big role in improving stability.

\begin{table}[ht!]
{\small
\centering

\setlength{\tabcolsep}{5pt}
\begin{tabular}{|l||cc|cc||l||cc|cc|}
\hline
&\multicolumn{2}{c|}{Low} &\multicolumn{2}{c||}{High}&&\multicolumn{2}{c|}{Low} &\multicolumn{2}{c|}{High}  \\
&cu 	&an 	&cu 	&an  &	&cu 	&an 	 	&cu 	&an 	\\\hline   
Newton's	&  11	 &  11		&  11	 &  11	 & Grad Desc	&2051	 &2051		 &2357	 &2357	 \\\hline
($L$,$q$,type,$\gamma$,*)&&&&& ($L$,$q$,type,$\gamma$,*)&&&&\\

(1,1,1,100)      	&   4		 &   4	 &   4	 &   4	& (1,5,v,100)	&   8	 &   8	 &   8	 &   8\\
(5,1,1,100)      		& 508	 & 508	 	 &1644	 &1644& (1,5,v,100,r)	&   8	 &   8	 &   8	 &   8	\\
  \hline
(5,5,s,0.1)	& Inf	 & Inf		 &   6	 &   6	&  (1,5,o,0.1,r)	& Inf	 &   7		 & Inf	 &4125\\
(5,5,s,0.1,r)	& Inf	 &8933		 &   6	 &4368&  (1,5,o,0.1,r,sc)	& Inf	 &   7	 & Inf	 &4125	\\
(10,5,s,0.1)	& Inf	 & Inf	  &   6	 &   6	 &(10,5,o,0.1,r,sc)	& Inf	 &8456		 & Inf	 &8786	\\
(10,5,s,0.1,r)	& Inf	 &8933		 &   6	 &4138	& (10,5,o,0.1,r)	& Inf	 &8456		 & Inf	 &8786	\\
(1,5,s,1,r)	& Inf	 &7899		 & Inf	 &7993	&(5,5,o,10,sc)	& Inf	 & Inf	 & Inf	 &  28	\\
(1,5,s,100)	&   8	 &   8		 &   8	 &   8	&(5,5,o,10)	& Inf	 & Inf	 	 & Inf	 &  28\\
(1,5,s,100,r)	&   8	 &   8	  &   8	 &   8	& (10,5,o,10,sc)	& Inf	 &  39		 & Inf	 & Inf\\

\hline
\end{tabular}

}
\caption{\textbf{Logistic regression, L-MS-BFGS.}  Number  of iterations until $\|\nabla f(x_t)\|/\|\nabla f(x_0)\| \leq \epsilon = 10^{-4}$. $c = 10$.   inf = more than 10000 iterations. $\sigma = 10, m = 2000, n = 1000$.    
cu = curve hugging, an = anchored at most recent.
For type,
1 = single-secant, v = vanilla, s = symmetric, o = ours.
sc = $\mu$-scaling, r = rejection.
All rows where no experiment did better than gradient descent were removed. A more extensive table is found in Appendix \ref{app:extranumerics:logreg_limited}.
}
\label{tab:logreg_limited_main}
\end{table}

Table \ref{tab:logreg_limited_main} gives a summary of the limited memory MS-BFGS over logistic regression. 
Larger values of $L$ exacerbate the stability issue, a phenomenon that is also known in the L-BFGS literature. The method is especially powerful when $\gamma$ is hyper-tuned. There are some cases in which diagonal perturbation improves matters, but it is less consistent than in the full-memory BFGS methods.

Figure \ref{fig:logreg_timing} presents the runtimes of various logistic regression methods. For larger problems, the complexity ordering aligns with intuition: gradient descent, limited-memory BFGS, BFGS with inverse updates, BFGS with direct updates, and finally, Newton’s method.
Notably, the limited-memory extension is \textit{crucial for scalability}, though its practical implementation remains challenging.
\begin{figure}
    \centering
    \includegraphics[width=\linewidth]{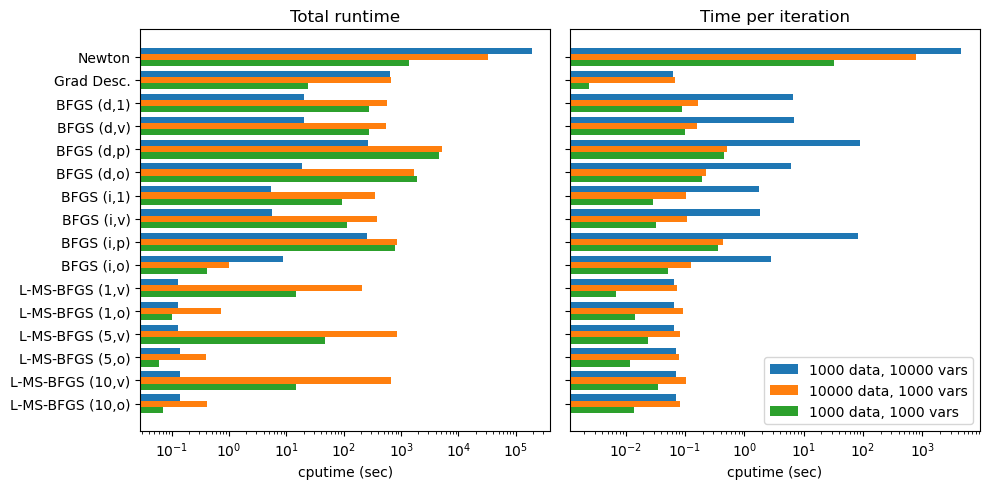}
    \caption{Runtime of various methods. d = direct update, i = inverse update. 1 = single-secant, v = vanilla multisecant, p = with PSD correction (infeasible in practice), o = with diagonal correction. For L-MS-BFGS, the first number is $L$, the limited memory size. For all MS methods, $q = 5$. }
    \label{fig:logreg_timing}
\end{figure}

\subsection{Application: Nonconvex neural network model training}
We investigate the efficacy of L-MS-BFGS in training a small neural network (Fig. \ref{fig:logreg_nn}). The nonconvex nature of the objective function introduces unique challenges; for instance, a non-decreasing loss or gradient norm trace does not necessarily indicate poor model training. Instead, performance must be assessed through the downstream task metric, such as the misclassification rate.

In this experiment, the MS methods exhibit greater instability in loss and gradient norm compared to gradient descent. However, \textit{they can sometimes achieve faster convergence in train and test misclassification rates}. This behavior aligns with a well-known phenomenon in deep learning: in networks where the final layer is logistic (for binary classification) or uses cross-entropy loss (for multiclass classification), the classifier effectively maximizes the margin. That is, \textit{even after the training data is fully fitted, further training to reduce the loss can enhance generalization}. In such landscapes, the goal is not merely to obtain a quick, suboptimal solution but to achieve a higher precision solution to the optimization problem.

\begin{figure}
    \centering
    \includegraphics[width=\linewidth]{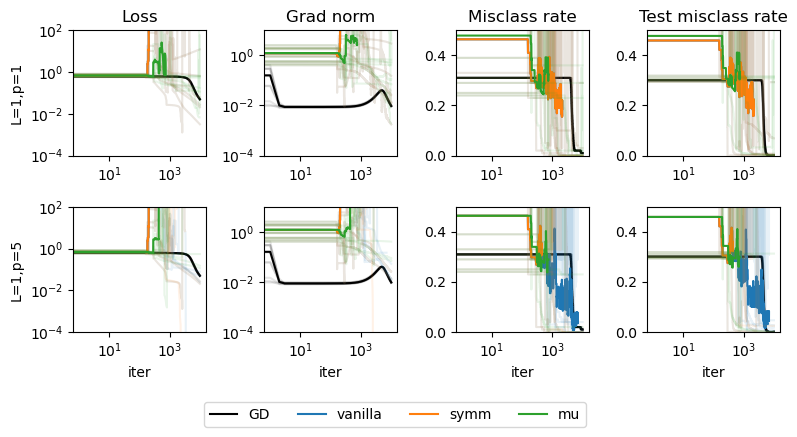}
    \caption{Two layer neural network, with 10 input features and 100 hidden neurons. Last layer is logistic layer. Problem is generated as described in Section \ref{sec:logistic_regression}. $L = 1$, $q = 5$. Dark trace is mean over 10 trial, light traces are the individual trials.}
    \label{fig:logreg_nn}
\end{figure}

\section{Conclusion}
In an era of growing problem sizes, higher-order methods leading to more precise solutions are often traded for lower-order, more approximate, and often stochastic methods.
The prevailing justification is that in many large-scale applications, approximate solutions are sufficient. 
However, even in deep neural network training, this assumption is not always true; achieving higher-precision solutions offers significant benefits for model generalization and robustness, particularly in margin-maximizing methods. Moreover, for over-parameterized models (a dominant trend in modern machine learning) the optimal solution often lies in an especially ill-conditioned region of the optimization landscape.
Therefore, scalable higher-order methods remain crucial for both scientific computing and machine learning. Multisecant methods provide a key tradeoff, improving second-order approximation while maintaining low per-iteration complexity. Additionally, limited-memory extensions integrate naturally with these methods.

Overall, there are still many areas to explore in multisecant QN methods, of which can lead to important contributions in large-scale optimization.
The most critical challenge in these methods remains numerical stability. In this work, we addressed this issue by introducing a diagonal perturbation, which efficiently approximates the full PSD projection approach of \citet{scieur2021generalization} to maintain descent steps. However, this is only a partial solution, as further refinements and hyperparameter tuning are still necessary to achieve consistently strong performance. Moreover, understanding how this technique generalizes across different methods is crucial. While our results suggest that BFGS is generally the most stable of the four methods examined, Broyden’s method often performs surprisingly well with minimal modifications, raising questions about the practical necessity of symmetric PSD Hessian approximations.
 
 Additionally, we note that the key benchmark is not gradient descent, which cannot achieve high precision solutions with competitive runtimes, but rather single-secant QN methods of each forms. That being said, in cases where the solution lies in a poorly conditioned region of the optimization landscape, multiple-secant methods show clear advantages. 
Finally, while we did not explore stochastic optimization in this work, existing research \citep{berahas2022limited} suggests that such extensions are feasible, and an interesting area of future study.

\clearpage
\newpage

\bibliographystyle{plainnat} 
\bibliography{refs} 



\appendix  
\section*{Appendix}



\section{Proofs for Theorem \ref{th:superlinear}}
\label{app:superlinear_conv}

 \subsection{Linear algebra facts}

\begin{lemma}\label{lem:multisecant_asymmproj} Take $U,V\in \R^{n\times p}$,
    as long as $U^\top V$ is invertible, and $p<n$, $\|I-U(V^\top U)^{-1}V^\top \|_2 =   \|U(V^\top U)^{-1}V^\top \|_2$.
\end{lemma}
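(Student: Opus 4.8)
The plan is to recognize the matrix $P := U(V^\top U)^{-1}V^\top$ as an oblique projector and then invoke the general fact that a nontrivial idempotent and its complement have equal spectral norm. First I would verify idempotency by a one-line computation, $P^2 = U(V^\top U)^{-1}(V^\top U)(V^\top U)^{-1}V^\top = P$, and observe that $\mathrm{rank}(P) = p$, so that $P \neq 0$ and, since $p < n$, also $P \neq I$. This reduces the lemma to the claim that $\|P\|_2 = \|I-P\|_2$ for every idempotent $P$ with $0 \neq P \neq I$. That statement is symmetric under $P \leftrightarrow I - P$ (the complement of a nontrivial idempotent is again a nontrivial idempotent), so it suffices to prove the one-sided bound $\|I-P\|_2 \geq \|P\|_2$.

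For this inequality I would argue directly inside a two-dimensional subspace. Write $\rho := \|P\|_2$; since $P$ fixes its range, $\rho \geq 1$. Let $x$ be a unit right singular vector for the largest singular value, so $P^\top P x = \rho^2 x$, and split $x = u + v$ with $u := Px \in \mathrm{range}(P)$ and $v := (I-P)x \in \ker P$. The optimality relation gives $P^\top u = \rho^2 x$, and pairing with $v$ (using $Pv = 0$) yields $\langle x, v\rangle = 0$; from this together with $\|x\| = 1$ I would extract the exact data $\|u\|^2 = \rho^2$, $\|v\|^2 = \rho^2 - 1$, and $\langle u, v\rangle = -(\rho^2 - 1)$. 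On $W := \mathrm{span}\{u,v\}$ the operator $I - P$ acts by $\alpha u + \beta v \mapsto \beta v$, so maximizing the Rayleigh quotient $\|(I-P)w\|^2/\|w\|^2$ over $w \in W$ reduces to a single-variable minimization of the denominator; the optimum evaluates to exactly $\rho^2$, which gives $\|I-P\|_2^2 \geq \rho^2 = \|P\|_2^2$. Combined with the symmetry of the claim, this forces equality.

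I expect the main obstacle to be the bookkeeping that pins down the three inner products above from the single singular-vector identity; once $\langle x, v\rangle = 0$ is established, the rest is a routine Rayleigh-quotient computation. A secondary point to handle cleanly is the degenerate case $\rho = 1$: then $\|v\|^2 = 0$, so $P$ is in fact an orthogonal projector onto a proper nonzero subspace and $\|P\|_2 = \|I-P\|_2 = 1$ trivially. (Alternatively, one could bypass the explicit computation by appealing to the CS/Halmos decomposition, which block-diagonalizes $P$ into $1\times 1$ pieces and $2\times 2$ blocks of the form $\begin{bmatrix} 1 & c \\ 0 & 0 \end{bmatrix}$, on each of which a direct check gives $\|P_i\|_2 = \|I - P_i\|_2 = \sqrt{1+c^2}$; but the two-dimensional argument above is more self-contained and avoids citing that structure theorem.)
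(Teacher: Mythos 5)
Your proposal is correct, but it takes a genuinely different route from the paper's. The paper works directly with the specific matrix: setting $Q = I-U(V^\top U)^{-1}V^\top$, it computes $QQ^\top$, observes $QQ^\top V = 0$, restricts to the orthogonal complement of $\mathrm{range}(V)$ via $I-VV^\dagger$, and then identifies the nontrivial eigenvalues of $QQ^\top$ through a characteristic-polynomial/determinant manipulation, concluding that the largest eigenvalue equals $\|U(V^\top U)^{-1}V^\top\|_2^2$. You instead abstract away the specific form: $P = U(V^\top U)^{-1}V^\top$ is an idempotent of rank $p$ (so $0\neq P\neq I$ since $p<n$), and you prove the general identity $\|P\|_2 = \|I-P\|_2$ for nontrivial idempotents --- the classical Del Pasqua--Ljance--Kato norm identity --- via a symmetric one-sided bound. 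Your computation checks out: $P^\top P x = \rho^2 x$ paired against $v = (I-P)x$ (using $Pv=0$) gives $\langle x,v\rangle = 0$, which with $\|u\| = \|Px\| = \rho$ yields $\|v\|^2 = \rho^2-1$ and $\langle u,v\rangle = -(\rho^2-1)$; minimizing $\|\alpha u + v\|^2 = \alpha^2\rho^2 - 2\alpha(\rho^2-1) + (\rho^2-1)$ at $\alpha = (\rho^2-1)/\rho^2$ gives the value $(\rho^2-1)/\rho^2 > 0$ when $\rho>1$, so the Rayleigh quotient for $I-P$ on $\mathrm{span}\{u,v\}$ attains exactly $\rho^2$, and the $P \leftrightarrow I-P$ symmetry forces equality. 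What each approach buys: the paper's spectral computation produces, as a byproduct, an explicit description of the whole nontrivial spectrum of $QQ^\top$ (eigenvalues of $V^\top(U^\top V)^{-1}U^\top U(U^\top V)^{-1}V$), which ties the quantity $1/\omega$ used later in the contraction lemmas to concrete Gram matrices; your argument is shorter, self-contained, avoids pseudoinverses and determinant algebra, and makes transparent that the identity has nothing to do with the particular factorization $U(V^\top U)^{-1}V^\top$.

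One small patch is needed in your degenerate case $\rho = 1$: there, $\|v\| = 0$ only shows that the particular maximizing vector lies in $\mathrm{range}(P)$, which does not by itself make $P$ an orthogonal projector. The cleanest fix avoids orthogonality entirely: since $P\neq I$, the complement $I-P$ is a nonzero idempotent, and any nonzero idempotent has spectral norm at least $1$ (it fixes its range), so $\|I-P\|_2 \geq 1 = \|P\|_2$, and the usual symmetry argument concludes. Alternatively, your orthogonal-projector claim is true but needs the standard one-line argument: for $u\in\mathrm{range}(P)$ and $v\in\ker P$, $\|u\| = \|P(u+tv)\| \leq \|u+tv\|$ for all $t$ forces $\langle u,v\rangle = 0$.
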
  

 \begin{proof}
First, we form $Q = I-U(V^\top U)^{-1}V^\top $. Here, we show that 
\[
QQ^\top  = I - U(V^\top U)^{-1}V^\top  - V(U^\top V)^{-1}U^\top  + U(V^\top U)^{-1}V^\top V(U^\top V)^{-1}U^\top 
\]
and we can derive
\[
QQ^\top V = V - U(V^\top U)^{-1}V^\top V - V + U(V^\top U)^{-1}V^\top V  = 0
\]
where $V$ is in the nullspace of $Q^\top$. So, if $x$ is a nontrivial eigenvector of $QQ^\top $, then it is a nontrivial eigenvector of 
\[
(I-V V^\dagger)QQ^\top (I-VV^\dagger) = (I-V V^\dagger) + (I-V V^\dagger)U(V^\top U)^{-1}V^\top V(U^\top V)^{-1}U^\top (I-V V^\dagger)
\]
or for some eigenvector $x$ in the nullspace of $V^\top $,
\begin{eqnarray*}
&&\max_{x:x^\top x=1, V^\top x = 0} x^\top (I-V V^\dagger)QQ^\top (I-VV^\dagger)x \\
&=& \underbrace{x^\top x}_{=1} - \underbrace{x^\top V V^\dagger x}_{= 0} + x^\top (I-V V^\dagger)U(V^\top U)^{-1}V^\top V(U^\top V)^{-1}U^\top (I-V V^\dagger)x\\
&=& 1 + x^\top  U(V^\top U)^{-1}V^\top V(U^\top V)^{-1}U^\top  x\\
&=& 1+\|V(U^\top V)^{-1}U^\top x\|_2^2\\
&=&  x^\top (I +  U(V^\top U)^{-1}V^\top V(U^\top V)^{-1}U^\top )x .
\end{eqnarray*}
Now, for $P = I-VV^\dagger$, the goal is to find the maximum eigenvalue of \\
$I+PU(V^\top U)^{-1}V^\top V(U^\top V)^{-1}UP^\top $.
Define $S = (V^\top U)(V^\top V)^{-1}(U^\top V)$ and 
\begin{align*}
\det((\lambda-1) I &  - PUS^{-1}U^\top P^\top ) \\
&= \frac{\det((\lambda -1)I)}{\det(S)}\det(S-(\lambda-1)^{-1} U^\top P^\top PU) \\
&= \frac{(\lambda-1)^{n-p}}{\det(S)}\det((V^\top U)(V^\top V)^{-1}(U^\top V)-(\lambda-1)^{-1} U^\top P^\top PU)\\
&= \frac{(\lambda-1)^{n-p-1}}{\det(S)}\det((\lambda-1)(V^\top U)(V^\top V)^{-1}(U^\top V)- U^\top P^\top PU).
\end{align*}
Since $P = I-VV^\dagger = I-V(V^\top V)^{-1}V^\top $, we compute
\[
PU = U - V(V^\top V)^{-1}V^\top U  
\]
\[
U^\top P^\top PU = U^\top U- U^\top  V(V^\top V)^{-1}V^\top U 
\]
and therefore
\begin{align*}
\det&((\lambda-1) I - PUS^{-1}U^\top P^\top ) 
\\
&= \frac{(\lambda-1)^{n-p-1}}{\det(S)}\det((\lambda-1)(V^\top U)(V^\top V)^{-1}(U^\top V)- U^\top U + U^\top V(V^\top V)^{-1}V^\top U)\\
&= \frac{(\lambda-1)^{n-p-1}}{\det(S)\det(U^\top V)^2}\det(\lambda(V^\top V)^{-1}- (U^\top V)^{-1}U^\top U(U^\top V)^{-1}  )\\
&= \frac{(\lambda-1)^{n-p-1}\det(V^\top V)}{\det(S)\det(U^\top V)^2}\det(\lambda- V^\top (U^\top V)^{-1}U^\top U(U^\top V)^{-1}V  ).
\end{align*}
where the zeros are the  eigenvalues of $V^\top (U^\top V)^{-1}U^\top U(U^\top V)^{-1}V $, and thus the largest is \\ $\|U(V^\top U)^{-1}V \|_2^2$.
\end{proof}

\begin{lemma}\label{lem:multisecant_contraction_angle}
     For $U\in \R^{n\times p}$, if $\|U-V\|_F\leq \alpha\|U\|_F$, then for $A = U(V^\top U)^{-1}V^\top $, $p-\|A\|^{-2}\leq \alpha^2$.
\end{lemma}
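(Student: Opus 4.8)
The plan is to read the quantity $p-\|A\|^{-2}$ through the singular value structure of the oblique projector $A=U(V^\top U)^{-1}V^\top$ that is already exposed in Lemma~\ref{lem:multisecant_asymmproj}. That computation (the factor $\det(\lambda - V^\top(U^\top V)^{-1}U^\top U(U^\top V)^{-1}V)$) shows that the nonzero singular values of $A$ are $\sec\theta_1,\dots,\sec\theta_p$, where $\theta_1,\dots,\theta_p$ are the principal angles between $\mathrm{range}(U)$ and $\mathrm{range}(V)$; indeed $A$ is idempotent with $\mathrm{range}(A)=\mathrm{range}(U)$ and $\ker(A)=\mathrm{range}(V)^\perp$. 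I read $\|A\|^{-2}$ as the multisecant analogue $\mathrm{tr}\big((AA^\top)^{+}\big)=\sum_{i=1}^p\cos^2\theta_i$, which coincides with $\|A\|_2^{-2}=\cos^2\theta$ in the single-secant case $p=1$ and is exactly what makes the count $p$ survive. Under this reading the target becomes the clean geometric identity
\[
p-\|A\|^{-2}=\sum_{i=1}^p\bigl(1-\cos^2\theta_i\bigr)=\sum_{i=1}^p\sin^2\theta_i=\|(I-P_V)P_U\|_F^2,
\]
where $P_U,P_V$ are the orthogonal projectors onto $\mathrm{range}(U)$ and $\mathrm{range}(V)$. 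Everything then reduces to bounding $\|(I-P_V)P_U\|_F$ by $\alpha$.

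First I would exploit that $I-P_V$ annihilates $\mathrm{range}(V)$: since $(I-P_V)V=0$, we have $(I-P_V)U=(I-P_V)(U-V)$, and hence $\|(I-P_V)U\|_F\le\|U-V\|_F\le\alpha\|U\|_F$ directly from the hypothesis. It then remains to pass from the generator $U$ to the subspace quantity $\|(I-P_V)P_U\|_F=\|(I-P_V)Q_U\|_F$, where $U=Q_UR_U$ is a thin QR factorization and $Q_U$ an orthonormal basis of $\mathrm{range}(U)$. Writing $Q_U=UR_U^{-1}$ gives $\|(I-P_V)Q_U\|_F\le\|(I-P_V)U\|_F\,\|R_U^{-1}\|_2\le\alpha\,\|U\|_F\,\sigma_{\min}(U)^{-1}$, so that $p-\|A\|^{-2}\le\alpha^2\bigl(\|U\|_F/\sigma_{\min}(U)\bigr)^2$.

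The hard part is collapsing the prefactor $\|U\|_F/\sigma_{\min}(U)$ to recover the bare $\alpha^2$ stated in the lemma. In the single-secant case this factor is identically $1$ (a single column has $\|U\|_F=\sigma_{\min}(U)=\|u\|_2$), which is precisely why the classical Dennis--Mor\'e angle bound is clean; the genuine obstacle for $p>1$ is that a small relative perturbation $\|U-V\|_F/\|U\|_F$ can still rotate an ill-conditioned column space substantially, and the prefactor is provably at least $\sqrt p$. I would therefore close the argument either by carrying the factor $\|U\|_F/\sigma_{\min}(U)$ explicitly --- it is the natural multisecant condition number, and it is $O(1)$ exactly because the algorithm column-normalizes $S_t$ and runs the inner-product rejection rule, which keeps $S_t^\top S_t$ full rank and well conditioned --- or, equivalently, by stating the lemma for an orthonormalized $U$ (so that $R_U=I$ and the chain terminates at the stated scale). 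This conditioning step, rather than the symmetrization or the projector algebra, is where the multisecant extension really departs from the single-secant argument, and it is the step I expect to require the most care.
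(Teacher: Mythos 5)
Your attempt does not prove the lemma, and the gap is twofold. First, you change the statement at the outset: you reinterpret $\|A\|^{-2}$ as $\tr\bigl((AA^\top)^{+}\bigr)=\sum_{i}\cos^2\theta_i$, which converts the target into the principal-angle bound $\sum_i\sin^2\theta_i=\|(I-P_V)P_U\|_F^2\le\alpha^2$. Second, even for that reinterpreted target your chain terminates at $\alpha^2\bigl(\|U\|_F/\sigma_{\min}(U)\bigr)^2$, and you explicitly concede that the condition-number prefactor cannot be removed, proposing instead to restate the lemma for orthonormalized $U$ or to carry the prefactor. A write-up that ends by modifying the statement is not a proof of it, however carefully the obstruction is diagnosed. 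The paper's own argument is entirely different in kind: it is pure trace algebra with no principal angles, no QR factorization, and no condition number. It expands the hypothesis as $-\tr(U^\top V)-\tr(V^\top U)+\tr(V^\top V)\le(\alpha^2-1)\tr(U^\top U)$, conjugates both sides by $(V^\top U)^{-1}V^\top$, and recognizes the resulting left-hand side as $\|J-I\|_F^2-p\ge -p$ for $J=(V^\top V)^{1/2}(V^\top U)^{-1}(V^\top V)^{1/2}$, concluding $(1-\alpha^2)\,\tr(A^\top A)\le p$. The controlled quantity is thus the Frobenius aggregate $\|A\|_F^2$ (this is also how the lemma is consumed in Lemma \ref{lem:onestep-stage2}, through $\omega$), not the per-angle quantity you set up, which is strictly stronger and genuinely harder.

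That said, your diagnosis of where the difficulty lives is substantive, and it lands exactly on the paper's weakest step. The ``multiplying left and right by $(V^\top U)^{-1}V^\top$'' move treats a trace inequality as if it could be conjugated; unpacked, it silently asserts $\|(U-V)(V^\top U)^{-1}V^\top\|_F\le\alpha\,\|U(V^\top U)^{-1}V^\top\|_F$, which does not follow from $\|U-V\|_F\le\alpha\|U\|_F$ when $U$ is ill-conditioned. Indeed, taking $U=[e_1,\ \epsilon e_2]$ and $V=[e_1,\ \epsilon e_2+\delta e_3]$ gives $\|U-V\|_F=\delta$ while $A=e_1e_1^\top+e_2e_2^\top+(\delta/\epsilon)e_2e_3^\top$, so $\|A\|_F^2=2+\delta^2/\epsilon^2$ blows up as $\epsilon\to0$ with $\delta$ fixed, defeating the trace-level conclusion as well. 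So your instinct that conditioning of $U$ (enforced in practice by column scaling and the rejection rule) is the real content of the multisecant extension is sound; but as submitted, your argument neither reproduces the paper's route nor closes the stated bound, and it should be scored as incomplete.
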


\begin{proof} The first step yields
\[
\tr((U-V)^\top (U-V)) -\tr(U^\top U )= -\tr(U^\top V)-\tr(V^\top U)+\tr(V^\top V) \leq (\alpha^2-1) \tr(U^\top U ).
\]
Multiplying left and right by $(V^\top U)^{-1}V^\top $ and simplifying gives 
\begin{multline*}
\tr(-2V\underbrace{(V^\top U)^{-1}V^\top }_{B^\top }+V(U^\top V)^{-1}V^\top V(V^\top U)^{-1}V^\top )\\
=\tr(V(U^\top V)^{-1}(-U^\top V -V^\top U+V^\top V)(V^\top U)^{-1}V^\top ) \\\leq (\alpha^2-1) \tr(V(U^\top V)^{-1}U^\top U (V^\top U)^{-1}V^\top ) = (\alpha^2-1)\tr(A^\top A)
\end{multline*}
so
\begin{multline*}
(\alpha^2-1)\tr(A^\top A)\geq \tr(-2VB^\top +VB^\top BV^\top ) \\
= \tr(-2B^\top V+BV^\top VB^\top ) \\
= \| BV^\top -I\|_F^2 - \tr(I_{p\times p})\geq -p.
\end{multline*}  
\end{proof}

\begin{lemma}[Smoothness for vectors]\label{lem:smoothness_vectors}
    If
\[
\|\nabla^2 f(u) - \nabla^2 f(v)\|\leq L \|u-v\|
\]
then
\[
\|\nabla f(u) - \nabla f(v) - \nabla^2 f(u)(u-v)\|\leq \frac{L }{2   } \|u-v\|^2.
\]

\end{lemma}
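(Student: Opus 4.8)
The plan is to represent the gradient difference as a line integral of the Hessian along the segment joining $v$ to $u$, subtract off the constant endpoint term, and then bound the resulting integrand using the Lipschitz hypothesis. Concretely, I would define the auxiliary path $\phi(t) = \nabla f\bigl(v + t(u-v)\bigr)$ for $t \in [0,1]$, so that $\phi(0) = \nabla f(v)$, $\phi(1) = \nabla f(u)$, and $\phi'(t) = \nabla^2 f\bigl(v + t(u-v)\bigr)(u-v)$. The fundamental theorem of calculus then gives
\[
\nabla f(u) - \nabla f(v) = \int_0^1 \nabla^2 f\bigl(v + t(u-v)\bigr)(u-v)\,dt.
\]

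The next step is to subtract the term $\nabla^2 f(u)(u-v)$ from both sides, which is natural since the constant vector $(u-v)$ can be pulled inside the integral and $\nabla^2 f(u)(u-v) = \int_0^1 \nabla^2 f(u)(u-v)\,dt$. This yields the clean error representation
\[
\nabla f(u) - \nabla f(v) - \nabla^2 f(u)(u-v) = \int_0^1 \bigl[\nabla^2 f\bigl(v + t(u-v)\bigr) - \nabla^2 f(u)\bigr](u-v)\,dt.
\]
Taking norms, using submultiplicativity, and applying the Lipschitz bound on the Hessian gives the integrand estimate
\[
\bigl\|\nabla^2 f\bigl(v + t(u-v)\bigr) - \nabla^2 f(u)\bigr\| \le L\,\bigl\|v + t(u-v) - u\bigr\| = L(1-t)\,\|u-v\|.
\]
Integrating $L(1-t)\|u-v\|^2$ over $t \in [0,1]$ produces the factor $\int_0^1 (1-t)\,dt = \tfrac12$, delivering the claimed bound $\tfrac{L}{2}\|u-v\|^2$.

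There is no serious obstacle here; this is a standard Taylor-with-integral-remainder argument. The only point requiring minor care is the choice to anchor the comparison at the endpoint $u$ (matching the $\nabla^2 f(u)$ that appears in the statement), which is what makes the distance from the integration point $v + t(u-v)$ to $u$ equal to $(1-t)\|u-v\|$ and thus produces exactly the $\tfrac12$ constant. I would also note for completeness that the integral representation requires $f \in \mathcal{C}^2$ along the segment, which is guaranteed by the standing smoothness assumption on $f$.
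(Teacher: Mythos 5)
Your proof is correct, but it takes a different route from the paper's. You use the direct Taylor-with-integral-remainder argument: write $\nabla f(u)-\nabla f(v)=\int_0^1 \nabla^2 f(v+t(u-v))(u-v)\,dt$, subtract $\nabla^2 f(u)(u-v)$ inside the integral, bound the integrand by $L(1-t)\|u-v\|^2$ via the Lipschitz hypothesis, and integrate to get the factor $\tfrac12$. The paper instead scalarizes: it defines $h_c(u)=c^\top \nabla f(u)$, observes $\nabla h_c(u)=\nabla^2 f(u)c$ so that $h_c$ is $L\|c\|$-smooth, invokes the standard scalar quadratic bound $|h_c(u)-h_c(v)-\nabla h_c(u)^\top(u-v)|\le \tfrac{L\|c\|}{2}\|u-v\|^2$, and then chooses $c=\nabla f(u)-\nabla f(v)-\nabla^2 f(u)^\top(u-v)$ so that the left side becomes $\|c\|^2$ and the factor $\|c\|$ cancels --- a dual-norm trick that recovers the vector bound from the scalar one. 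Your approach is more self-contained, needing only the fundamental theorem of calculus for vector-valued maps along the segment (justified, as you note, by the standing $f\in\mathcal C^2$ assumption), and it anchors the comparison Hessian at $u$ correctly to produce the $(1-t)$ weight and hence the sharp constant $\tfrac{L}{2}$. The paper's argument buys avoidance of vector-valued integrals by delegating to a well-known scalar smoothness fact, though that fact is itself usually proved by exactly your integral computation; the two proofs yield the identical bound and are of comparable length.
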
  
\begin{proof}
    Consider the 1-D projection $h_c(u) = c^\top \nabla f(u)$. Then $\nabla h_c(u) = \nabla^2 f(u) c\;$ and
    \[
    \|\nabla h_c(u) - \nabla h_c(v)\| = \|(\nabla^2 f(u) -\nabla^2 f(v))c\| \leq L\|c\|\|u-v\|
    \]
    e.g. $h_c$ is $L\|c\|$-smooth. Therefore, 
    \[
    |h_c(u) - h_c(v) - \nabla h_c(u)^\top (u-v)| \leq \frac{L\|c\|}{2}\|u-v\|^2.
    \]
    Expanding the left hand since, 
    \[
       h_c(u) - h_c(v) - \nabla h_c(u)^\top (u-v) = 
        c^\top (\nabla f(u) - \nabla f(v)) - c^\top \nabla^2 f(u)^\top (u-v).
    \]
    Picking $c = \nabla f(u) - \nabla f(v)) - \nabla^2 f(u)^\top (u-v)$ gives  
     \[
      \|c\| \|\nabla f(u) - \nabla f(v)) - \nabla^2 f(u)^\top (u-v)\| \leq  \frac{L\|c\|}{2}\|u-v\|^2.
    \]
    Canceling out $\|c\|$ completes the proof.
\end{proof}

\subsection{Small lemmas}

\begin{lemma}[Primal dual contraction]
\label{lem:TZ_contraction}
Suppose $S_t,Y_t\in \R^{n\times p}$. Then 
    \[
\|G_t^{1/2}R_t-G_t^{-1/2}Z_t\|  \leq  \frac{p\|G_t^{-1/2}\|\|R_t\|^2 L}{2}. 
\]
\end{lemma}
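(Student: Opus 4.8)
The plan is to reduce the bound to a single clean column-wise estimate powered by the vector-smoothness inequality of Lemma~\ref{lem:smoothness_vectors}. In the scaled variables $\xi = F_0^{-1/2}x$ used by the full statement, I read $R_t$ and $Z_t$ as the (scaled) secant matrices built from $S_t$, $Y_t$, and $G_t$ as the Hessian $g'$ evaluated at the common anchor point of the secants, so that each column pair $(r_i,z_i)$ is a scaled step / gradient-difference pair. The first move is purely algebraic: factor out $G_t^{-1/2}$ by writing
\[
G_t^{1/2}R_t - G_t^{-1/2}Z_t = G_t^{-1/2}\,(G_t R_t - Z_t),
\]
so that submultiplicativity of the operator norm gives
\[
\|G_t^{1/2}R_t - G_t^{-1/2}Z_t\| \leq \|G_t^{-1/2}\|\,\|G_t R_t - Z_t\|.
\]
This isolates $\|G_t R_t - Z_t\|$, which quantifies how badly the single matrix $G_t$ fails to send every step onto its own gradient difference.

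Next I would pass to a column-by-column estimate. Using $\|M\|\leq\|M\|_F\leq \sum_i \|m_i\|$ for the columns $m_i$ of $M$ (the last inequality is just $\sqrt{\sum_i a_i^2}\leq\sum_i a_i$ for nonnegative $a_i$), one gets $\|G_t R_t - Z_t\| \leq \sum_i \|G_t r_i - z_i\|$. For the anchored-at-most-recent construction \eqref{update-sy-2}, every column shares the endpoint, i.e.\ $r_i = \xi_{t+1}-\xi_i$ and $z_i = \nabla g(\xi_{t+1}) - \nabla g(\xi_i)$, and with $G_t = g'(\xi_{t+1})$, Lemma~\ref{lem:smoothness_vectors} applied with $u=\xi_{t+1}$, $v=\xi_i$ yields $\|G_t r_i - z_i\| \leq \tfrac{L}{2}\|r_i\|^2$. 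Bounding each column norm by the spectral norm, $\|r_i\|\leq\|R_t\|$, and summing the $p$ columns gives $\|G_t R_t - Z_t\| \leq \tfrac{pL}{2}\|R_t\|^2$. Combining this with the operator-norm inequality above produces exactly $\|G_t^{1/2}R_t - G_t^{-1/2}Z_t\| \leq \tfrac{p\,\|G_t^{-1/2}\|\,\|R_t\|^2 L}{2}$, which is the claim; the factor $p$ is precisely the price of collapsing the $p$ separate column estimates into one spectral-norm bound.

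The main obstacle — and the only genuinely non-mechanical choice — is pinning down the reference matrix $G_t$ so that the smoothness lemma applies to every column with the clean $\tfrac{L}{2}\|r_i\|^2$ bound. This is exactly why the anchored construction is convenient: all columns share the endpoint $\xi_{t+1}$, so the single Hessian $g'(\xi_{t+1})$ serves as $G_t$ simultaneously for all of them, and Lemma~\ref{lem:smoothness_vectors} fires directly. For the curve-hugging variant \eqref{update-sy-1} each secant carries its own average Hessian, and one would instead have to absorb the discrepancies $\|g'(\xi_{t+1})-g'(\xi_i)\|$ through the Lipschitz condition \eqref{eq:lipschitz_hessian}, producing a messier constant; that reconciliation is the step I expect to require the most care. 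The remaining manipulations (the factorization, submultiplicativity, and the $\|M\|_F\leq\sum_i\|m_i\|$ passage) are routine and I would not belabor them.
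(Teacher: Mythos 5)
Your proposal is correct and follows essentially the same route as the paper: a columnwise second-order Taylor estimate (the paper re-derives it from the integral representation $z_t = \int_0^1 g'(\xi_t+\tau r_t)r_t\,d\tau$, which is Lemma~\ref{lem:smoothness_vectors} in disguise), followed by bounding the matrix norm by the sum of the $p$ column norms and $\|r_i\|\leq\|R_t\|$, with the factor $p$ arising exactly as you describe. The base-point subtlety you flag is real but does not separate you from the paper --- its own proof applies the single-secant bound with $G_t$ fixed to all curve-hugging columns without the Lipschitz correction term, so your anchored reading is, if anything, the more careful of the two.
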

\begin{proof}
    
For a single secant vector,
\[
z_t = g(\xi_{t+1})-g(\xi_t) = \int_0^1g'(\xi_t+\tau r_t)r_td\tau
\]
\[
\Rightarrow r_t-G_t^{-1}z_t = -G_t^{-1}\int_0^1(g'(\xi_t + \tau r_t) - g'(\xi_t))r_td\tau 
\]
\[
\Rightarrow  \|G_t^{1/2}r_t-G_t^{-1/2}z_t\|_2 \leq  \|G_t^{-1/2}\|  \int_0^1 L \tau \|r_t\|^2 d\tau = \frac{\|G_t^{-1/2}\| L  \|r_t\|^2}{2}.
\]
So if there are $p$ multisecant vectors in $R_t$,  then 
\begin{eqnarray*}
\|G_t^{1/2}R_t-G_t^{-1/2}Z_t\| &=& \sum_{j = t-p+1}^t \sum_{l=j+1}^t \|G_t^{1/2}(\xi_l-\xi_{l-1})-G_t^{-1/2}(g(\xi_l-\xi_{l-1}))\|  \\
&\leq&  \sum_{j = t-p+1}^t \sum_{l=j}^t \frac{\|G_t^{-1/2}\|\|r_l\|^2 L}{2}    \\
&\leq&  \frac{p\|G_t^{-1/2}\|\|R_t\|^2 L}{2}  .
\end{eqnarray*}

\end{proof}

\begin{lemma}[Inverse estimate local proximity]\label{lem:estimate_local}
Suppose that  $\|\xi_0-\xi_t\| < \tau /L$.
 Then
$\|G_t^{-1}\|  \leq \frac{1}{1-\tau}$.

\end{lemma}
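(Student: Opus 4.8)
The plan is to exploit the coordinate scaling $\xi = F_0^{-1/2}x$ together with the Lipschitz continuity of the Hessian. First I would pin down what $G_t$ denotes: from the derivation preceding Lemma~\ref{lem:TZ_contraction}, the matrix satisfying $G_t r_t = g'(\xi_t) r_t$ for the relation used there is simply the scaled Hessian at the current iterate, $G_t = g'(\xi_t)$. The crucial normalization is that at the base point the scaled Hessian is the identity: since $F_0 = \nabla^2 f(x_0)$ and $g'(\xi) = F_0^{-1/2}\nabla^2 f(F_0^{-1/2}\xi)F_0^{-1/2}$, evaluating at $\xi_0$ gives $g'(\xi_0) = F_0^{-1/2} F_0 F_0^{-1/2} = I$. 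This identity is the whole point of the change of variables and is what makes a clean bound of the stated form possible.

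Next I would invoke the Lipschitz bound \eqref{eq:lipschitz_hessian} against this base point. Writing $g'(\xi_0)=I$ and using the hypothesis $\|\xi_0-\xi_t\| < \tau/L$, we obtain $\|g'(\xi_t) - I\|_2 = \|g'(\xi_t) - g'(\xi_0)\|_2 \leq L\|\xi_t - \xi_0\|_2 < L\cdot(\tau/L) = \tau$, where implicitly we restrict to $\tau < 1$ (the ``relevant local neighborhood'' regime), so that the resulting bound is meaningful.

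Finally, since $g'(\xi_t)$ is symmetric (it is a Hessian), the spectral estimate $\|g'(\xi_t) - I\|_2 < \tau$ forces every eigenvalue of $g'(\xi_t)$ into the interval $(1-\tau,\,1+\tau)$ by Weyl's inequality; in particular $g'(\xi_t) \succeq (1-\tau)I \succ 0$. Inverting and taking spectral norms then yields $\|G_t^{-1}\|_2 = \lambda_{\min}(g'(\xi_t))^{-1} \leq (1-\tau)^{-1}$, which is exactly the claim.

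There is no genuine obstacle here; the only thing that requires care (rather than difficulty) is the bookkeeping of which matrix $G_t$ refers to, and the recognition that the scaling in the assumptions was engineered precisely so that $g'(\xi_0)=I$. Once those two facts are identified, the result reduces to a single eigenvalue-perturbation estimate, and the strict inequality in the hypothesis comfortably absorbs the passage from $<\tau$ to the $\leq (1-\tau)^{-1}$ bound.
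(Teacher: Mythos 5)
Your proposal is correct and follows essentially the same route as the paper: both arguments rest on the normalization $G_0 = g'(\xi_0) = F_0^{-1/2}F_0F_0^{-1/2} = I$, apply the Hessian Lipschitz bound to get $\|I - G_t\| \leq L\|\xi_0-\xi_t\| < \tau$, and then invert. The only cosmetic difference is the final step, where the paper uses the Neumann-series bound $\|G_t^{-1}\| \leq (1-\|I-G_t\|)^{-1}$ (which needs no symmetry) while you use symmetry of $G_t$ and eigenvalue perturbation to reach the identical conclusion.
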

\begin{proof}

Since 
\[
\|I-G_t\| = \|G_0-G_t\| \leq L\|\xi_0-\xi_t\| <\tau.
\]
Then
\[
\|G_t^{-1}\| \leq \frac{1}{1-\|I-G_t\|} \leq \frac{1}{1-L\|\xi_0-\xi_t\|} \leq \frac{1}{1-\tau}.
\]
\end{proof}

\begin{lemma}[Bound on $C$]\label{lem:Cbound}
If $\|G^{-1}\|\leq \gamma_1$,   $\|I-G\|\leq \gamma_2$, and $\|G^{-1}-C^{-1}\|\leq \gamma_3$, and $\gamma_1\gamma_2+\gamma_3 < 1\;$, 
 then $\|C\| \leq \frac{1}{1-\gamma_1\gamma_2-\gamma_3}$

\end{lemma}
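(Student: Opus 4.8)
The plan is to show that $C^{-1}$ is close to the identity in operator norm, and then invert via the standard Neumann-series estimate. Recall the elementary fact that if $\|I-M\|_2 \le r < 1$, then $M$ is invertible and $\|M^{-1}\|_2 \le \frac{1}{1-r}$. I would apply this with $M = C^{-1}$, so that $M^{-1} = C$, and the conclusion $\|C\| \le \frac{1}{1-\gamma_1\gamma_2-\gamma_3}$ follows provided I can establish $\|I - C^{-1}\| \le \gamma_1\gamma_2 + \gamma_3$, which is strictly less than $1$ by hypothesis.

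The first step is to decompose
\[
I - C^{-1} = (I - G^{-1}) + (G^{-1} - C^{-1}).
\]
The second summand is directly controlled by assumption: $\|G^{-1}-C^{-1}\| \le \gamma_3$. The only real maneuver is converting the given bound on $\|I-G\|$ into a bound on $\|I - G^{-1}\|$, and the key trick is the factorization
\[
I - G^{-1} = G^{-1}(G - I),
\]
so that submultiplicativity gives $\|I - G^{-1}\| \le \|G^{-1}\|\,\|G - I\| \le \gamma_1\gamma_2$. Applying the triangle inequality to the decomposition then yields
\[
\|I - C^{-1}\| \le \gamma_1\gamma_2 + \gamma_3 < 1.
\]

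Finally, I would invoke the Neumann bound quoted above with $M = C^{-1}$ and $r = \gamma_1\gamma_2 + \gamma_3$, concluding $\|C\| = \|(C^{-1})^{-1}\| \le \frac{1}{1 - \gamma_1\gamma_2 - \gamma_3}$, which is exactly the claim. There is no substantive obstacle here: the entire content is the rewrite $I - G^{-1} = G^{-1}(G-I)$, which lets the hypothesis on $\|I-G\|$ (rather than on $\|I - G^{-1}\|$ directly) feed into the perturbation estimate. The hypothesis $\gamma_1\gamma_2 + \gamma_3 < 1$ is precisely what is needed to keep $C^{-1}$ inside the region where the Neumann series converges, and it also implicitly guarantees the invertibility of $C^{-1}$ that the bound requires.
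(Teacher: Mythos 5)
Your proof is correct, and its first half is exactly the paper's argument: the same decomposition $I-C^{-1}=(I-G^{-1})+(G^{-1}-C^{-1})$, the same factorization $\|I-G^{-1}\|\leq\|G^{-1}\|\,\|I-G\|\leq\gamma_1\gamma_2$, and the same triangle-inequality conclusion $\|I-C^{-1}\|\leq\gamma_1\gamma_2+\gamma_3=:\gamma<1$. The two arguments diverge only in the final inversion step. The paper passes to eigenvalues: since each eigenvalue of $I-C^{-1}$ has the form $1-\lambda_i^{-1}$ for an eigenvalue $\lambda_i$ of $C$, it concludes $|1-\lambda_i^{-1}|\leq\gamma$, hence $\lambda_i\leq\frac{1}{1-\gamma}$. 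You instead invoke the Neumann-series bound with $M=C^{-1}$, giving $\|C\|=\|(C^{-1})^{-1}\|\leq\frac{1}{1-\gamma}$ directly. Your route is marginally stronger: the paper's eigenvalue argument bounds the spectral radius of $C$, which coincides with $\|C\|_2$ only when $C$ is symmetric (or normal) --- true in the paper's setting, where $C_t$ is a symmetrized Hessian estimate, but an implicit assumption nonetheless; the Neumann estimate makes no such assumption and yields the operator-norm bound for arbitrary invertible $C$. Both are standard and both close the lemma; there is no gap in either.
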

\begin{proof}
From the following inequality,
\[
\|I-C^{-1}\| \leq \|I-G^{-1}\| + \|G^{-1}-C^{-1}\| \leq \|G^{-1}\| \|I-G\| + \|G^{-1}-C^{-1}\| \leq \gamma_1\gamma_2+\gamma_3 =: \gamma,
\]
if $\lambda_i$ is an eigenvalue of $C$, then 
\[
\max_i |1-\lambda^{-1}_i| \leq \gamma \Rightarrow \lambda_i^{-1}\in (1-\gamma,1+\gamma) \Rightarrow  \lambda_i \leq \frac{1}{1-\gamma}.
\]
\end{proof}

\subsection{Linear and superlinear convergence proofs}

\subsubsection{Setup}
We now consider the convergence proof for the symmetrized multisecant BFGS method with diagonal perturbation, e.g.  
    \begin{eqnarray*}
\hat B_{\mathrm{next}}^{-1}  &=&  
 (I-S(Y^\top S)^{-1}Y^\top ) B^{-1} (I-Y(S^\top Y)^{-1}S^\top ) + \frac{1}{2}S ((S^\top Y)^{-1} +(Y^\top S)^{-1})S^\top 
     \end{eqnarray*}
and
\[
x_{t+1} = x_t - (\hat B_t^{-1}+ \mu_t I) \nabla f(x_t).
\]

     \subsubsection{Scaling}
 We assume we start at some $x_0$ suitably close to $x^*$. 
Define $F_0 = \nabla^2 f(x_0)$
We then analyze the method, scaled by $F_0$.
Specifically, we define
\[
\xi_t = F_0^{1/2}x_t, \quad g(\xi) = F_0^{-1/2}\nabla f(F_0^{-1/2}\xi).
\]
Then 
\[
g(\xi_t) = F_0^{-1/2} \nabla f(x_t), \quad g(\xi^*) = 0 \iff \nabla f(F_0^{-1/2}\xi^*) = 0.
\]
Define $r_t = F_0^{1/2}s_t$, $z_t = F_0^{-1/2} y_t$, and $C_t = F_0^{-1/2} B_t F_0^{-1/2}$. Then 
\[
\xi_{t+1} = \xi_t + r_t, \quad z_t = g(\xi_{t+1})-g(\xi_t).
\]
By similar token, $R_t = F_0^{1/2}S_t$, $Z_t = F_0^{-1/2}Y_t$. We also include $G_t = g'(\xi_t)$,
$R_t = F_0 S_t$, $Z_t = F_0^{-1} Y_t$.
 Now to generalize our analysis to the three different multisecant methods, we consider two constructions of $C_t = F_0^{-1}B_tF_0^{-1}$, the scaled Hessian approximation: the asymmetric  version: 
\[
\tilde C_{t+1}^{-1} 
      =
 (I- R_t(Z_t^\top  R_t)^{-1}Z_t^\top ) C_t^{-1} (I-Z_t(R_t^\top Z_t)^{-1}R_t^\top ) +  R_t (R_t^\top Z_t)^{-1} R_t^\top   + \mu_t F_0
  \]
  and the symmetrized version
\begin{multline*}
\hat C_{t+1}^{-1} 
      =
 (I- R_t(Z_t^\top  R_t)^{-1}Z_t^\top ) C_t^{-1} (I-Z_t(R_t^\top Z_t)^{-1}R_t^\top )\\ + \frac{1}{2}( R_t (R_t^\top Z_t)^{-1} R_t^\top  + R_t(Z_t^\top R_t)^{-1} R_t^\top )  + \mu_t F_0
  \end{multline*}
  where $C_{t+1} = \tilde C_{t+1}^{-1}$ is the unsymmetrized update and $C_{t+1} = \hat C_{t+1}^{-1}$  is the symmetrized update. Taking $\mu_t = 0$ considers no diagonal perturbation, and $\mu_t>0$ with diagonal perturbation.

\subsubsection{Contraction steps}
\label{app:onestep_contract}

We use the above variable assignments, with $G = G_t$, $Z=Z_t$, $R=R_t$, $C_{\mathrm{next}} = C_{k+1}$, 
 and  $\|X\|_G = \| G^{1/2}XG^{T/2}\|_F$.
The next two lemmas show that for either the symmetric or asymmetric case, the one-step contraction analysis will eventually yield the same result.
Thus, after this point, we consider $C_{\mathrm{next}}$ to be from either the symmtrized or asymmetric  (vanilla) method.

\begin{lemma}[One step, asymmetric] \label{lem:onestep-nonsymm}
 For the asymmetric update  $C^{-1}_{\mathrm{next}} = \tilde C_{t+1}^{-1}$ 
 we have 
     \begin{multline*}
\|G - C_{\mathrm{next}}^{-1} \|_G - \mu \|F_0\|_G
      \leq  
 \|P(G- C^{-1} ) P^\top \|_G + \|(G^{-1}Z-  R) (R^\top Z)^{-1} R^\top  \|_G 
 \\+   \|R(Z^\top R)^{-1}(Z^\top G^{-1}- R^\top )P \|_G  .
 \end{multline*}
 
\end{lemma}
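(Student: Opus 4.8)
The plan is to collapse the multisecant inverse update into a single oblique projection acting on the previous error, plus two ``secant defect'' pieces that are controlled (later) by the Hessian Lipschitz bound and Lemma~\ref{lem:TZ_contraction}. Write $P := I - R(Z^\top R)^{-1}Z^\top$, so that $P^\top = I - Z(R^\top Z)^{-1}R^\top$ are exactly the two factors in the update, and the asymmetric step reads $C_{\mathrm{next}}^{-1} = P\,C^{-1}P^\top + R(R^\top Z)^{-1}R^\top + \mu F_0$. The two structural facts I will use everywhere are $PR = 0$ and $P^\top Z = 0$, both immediate from the invertibility of $Z^\top R$. I would measure against the inverse of the true scaled Hessian, $G^{-1}$, since $C_{\mathrm{next}}^{-1}$ is an inverse-Hessian estimate obeying the inverse secant relation $C_{\mathrm{next}}^{-1}Z = R$ when $\mu = 0$; this is precisely why the defect $G^{-1}Z - R$ is what appears on the right-hand side.

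The heart of the argument is an exact hereditary identity. First I would evaluate $P\,G^{-1}P^\top + R(R^\top Z)^{-1}R^\top$ in closed form, expanding the two factors and using the symmetry of $G$ so that $Z^\top G^{-1} = (G^{-1}Z)^\top$. Collecting terms, every ``$R(\cdots)R^\top$'' block telescopes and one is left with
\[
P\,G^{-1}P^\top + R(R^\top Z)^{-1}R^\top = G^{-1} - (G^{-1}Z-R)(R^\top Z)^{-1}R^\top - R(Z^\top R)^{-1}(Z^\top G^{-1})P^\top.
\]
The one non-obvious simplification is that $Z^\top G^{-1}P^\top$ may be replaced by $(Z^\top G^{-1}-R^\top)P^\top$ at no cost, because $R^\top P^\top = (PR)^\top = 0$; this is what converts the second defect into the desired $(Z^\top G^{-1}-R^\top)$ form. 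Subtracting the identity from the update cancels the common $R(R^\top Z)^{-1}R^\top$ term and yields
\[
G^{-1} - C_{\mathrm{next}}^{-1} + \mu F_0 = P(G^{-1}-C^{-1})P^\top + (G^{-1}Z-R)(R^\top Z)^{-1}R^\top + R(Z^\top R)^{-1}(Z^\top G^{-1}-R^\top)P^\top.
\]

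To finish I would take $\|\cdot\|_G$ of both sides and apply the triangle inequality to split off the three right-hand terms. The orientation of the projection in the last term is cosmetic: since $\|X\|_G = \|G^{1/2}XG^{1/2}\|_F$ is invariant under transposition, that term may be written with $P$ on either side to match the stated form. A final reverse triangle inequality, $\|a + \mu F_0\|_G \ge \|a\|_G - \mu\|F_0\|_G$, moves the perturbation to the left and produces the claimed bound (with $G^{-1}$ as the reference matrix, consistent with the $G^{-1}Z - R$ defect on the right). The \textbf{main obstacle} is entirely the bookkeeping in the hereditary identity: one must check that all terms carrying a bare $C^{-1}$ reassemble into the single projected block $P(G^{-1}-C^{-1})P^\top$, and that the two leftover pieces are exactly the secant-defect expressions, repeatedly invoking $PR=0$ and $P^\top Z=0$ to discard cross terms. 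The diagonal perturbation $\mu F_0$ and the symmetry of $G$ enter only superficially and cause no real difficulty.
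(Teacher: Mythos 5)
Your proposal is correct and takes essentially the same route as the paper: both derive the exact identity $G^{-1} - C_{\mathrm{next}}^{-1} + \mu F_0 = P(G^{-1}-C^{-1})P^\top + (G^{-1}Z-R)(R^\top Z)^{-1}R^\top + R(Z^\top R)^{-1}(Z^\top G^{-1}-R^\top)P^\top$, using $R^\top P^\top = (PR)^\top = 0$ to discard the leftover cross term $R(Z^\top R)^{-1}R^\top P^\top$, and then conclude by the triangle inequality in $\|\cdot\|_G$. The only differences are cosmetic---you package the algebra as a hereditary identity for $PG^{-1}P^\top + R(R^\top Z)^{-1}R^\top$ while the paper expands $G^{-1}-PG^{-1}P^\top = G^{-1}(I-P^\top) + (I-P)G^{-1}P^\top$ directly, and the $P$ versus $P^\top$ orientation you flag is a typo already present between the paper's lemma statement and its own proof.
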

\begin{proof}
    
         Beginning with 
         \begin{eqnarray*}
C_{\mathrm{next}}^{-1} 
      &=& 
 \underbrace{(I- R(Z^\top  R)^{-1}Z^\top )}_{P} C^{-1} (I-Z(R^\top Z)^{-1}R^\top ) +  R (R^\top Z)^{-1} R^\top  + \mu_t I,
     \end{eqnarray*} 
then    
\begin{eqnarray*}
G^{-1} - C_{\mathrm{next}}^{-1} - \mu I
      &=& 
 P(G^{-1}- C^{-1} ) P^\top  -  R (R^\top Z)^{-1} R^\top  +  G^{-1}(I-P^\top ) + (I-P)G^{-1}P^\top  \\ 
      &=& 
 P(G^{-1}- C^{-1} ) P^\top  -  R (R^\top Z)^{-1} R^\top  \\&&+  G^{-1}Z(R^\top Z)^{-1}R^\top  + R(Z^\top R)^{-1}Z^\top G^{-1}P^\top  \\ 
      &=& 
 P(G^{-1}- C^{-1} ) P^\top  +(G^{-1}Z-  R) (R^\top Z)^{-1} R^\top  \\
&& + R(Z^\top R)^{-1}(G^{-1}Z-R)^\top P^\top  +   R(Z^\top R)^{-1}R^\top P^\top .
     \end{eqnarray*}
     Since 
     \begin{multline*}
     R(Z^\top R)^{-1}R^\top P^\top  =  R(Z^\top R)^{-1}R^\top (I-Z(R^\top Z)^{-1}R^\top ) \\= 
     R(Z^\top R)^{-1}R^\top  - 
     R(Z^\top R)^{-1}R^\top Z(R^\top Z)^{-1}R^\top  = 0 ,
     \end{multline*}
we use triangle inequality of the $\|\cdot\|_G$ to complete the proof. 
\end{proof}

\begin{lemma}[One step, symmetric  ] \label{lem:onestep-symm}
 For the symmetric update 
 $C^{-1}_{\mathrm{next}} = \hat C_{t+1}^{-1}$, we have 
     \begin{multline*}
\|G - C_{\mathrm{next}}^{-1} \|_G - \mu \|F_0\|_G
      \leq
 \|P(G- C^{-1} ) P^\top \|_G + \|(G^{-1}Z-  R) (R^\top Z)^{-1} R^\top  \|_G 
 \\+   \|R(Z^\top R)^{-1}(Z^\top G^{-1}- R^\top )P \|_G  .
 \end{multline*}
 
\end{lemma}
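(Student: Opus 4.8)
The plan is to exploit the fact that the symmetric update $\hat C_{t+1}^{-1}$ is \emph{literally} the symmetrization of the asymmetric update $\tilde C_{t+1}^{-1}$, and then to show that passing to the symmetric part cannot increase the $\|\cdot\|_G$-norm. This reduces the symmetric case to the bound already proved in Lemma \ref{lem:onestep-nonsymm}, and explains why the two lemmas have an identical right-hand side.

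First I would record the structural identity $\hat C_{t+1}^{-1} = \tfrac12\big(\tilde C_{t+1}^{-1} + (\tilde C_{t+1}^{-1})^\top\big)$. Writing $P = I - R(Z^\top R)^{-1}Z^\top$, both updates share the common piece $PC^{-1}P^\top + \mu_t F_0$, which is symmetric because $C^{-1}$ is symmetric (the symmetric scheme keeps every $C_t^{-1}$ symmetric, starting from $C_0^{-1} = I$) and $F_0 = \nabla^2 f(x_0)$ is symmetric. The two updates differ only in the rank-$p$ tail: the asymmetric one carries $R(R^\top Z)^{-1}R^\top$, whose transpose is exactly $R(Z^\top R)^{-1}R^\top$, so averaging recovers the symmetric tail $\tfrac12\big(R(R^\top Z)^{-1}R^\top + R(Z^\top R)^{-1}R^\top\big)$. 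Hence $\hat C_{t+1}^{-1}$ is precisely the symmetric part of $\tilde C_{t+1}^{-1}$.

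Next, since $G^{-1}$ and $\mu_t F_0$ are symmetric, the error matrix inherits the same relation, namely $G^{-1} - \hat C_{t+1}^{-1} - \mu_t F_0 = \tfrac12\big[(G^{-1} - \tilde C_{t+1}^{-1} - \mu_t F_0) + (G^{-1} - \tilde C_{t+1}^{-1} - \mu_t F_0)^\top\big]$. I would then invoke the key norm inequality: for symmetric positive definite $G$ and any $M$, $\big\|\tfrac12(M + M^\top)\big\|_G \le \|M\|_G$. This holds because $\|M\|_G = \|G^{1/2}MG^{1/2}\|_F$ and conjugation by the symmetric factor $G^{1/2}$ commutes with transposition; setting $N = G^{1/2}MG^{1/2}$ gives $G^{1/2}\,\tfrac12(M+M^\top)\,G^{1/2} = \tfrac12(N+N^\top)$, and the symmetric part of $N$ is Frobenius-orthogonal to its skew part, so $\|\tfrac12(N+N^\top)\|_F \le \|N\|_F$.

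Applying this with $M = G^{-1} - \tilde C_{t+1}^{-1} - \mu_t F_0$ yields $\|G^{-1} - \hat C_{t+1}^{-1} - \mu_t F_0\|_G \le \|G^{-1} - \tilde C_{t+1}^{-1} - \mu_t F_0\|_G$, and the right-hand side was bounded in Lemma \ref{lem:onestep-nonsymm} by exactly the three terms in the statement; a final triangle inequality to move $\mu_t F_0$ across produces the $-\mu\|F_0\|_G$ term on the left. The only point requiring care, and the place I expect the main (if modest) obstacle, is justifying that $C^{-1}$ is symmetric so the symmetrization identity is exact; this is supplied by the inductive hypothesis that the symmetrized scheme preserves symmetry at every step, making the argument self-consistent. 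Everything else is a direct transcription of the asymmetric computation.
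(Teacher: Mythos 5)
Your proposal is correct, but it takes a genuinely different route from the paper. The paper proves Lemma \ref{lem:onestep-symm} by brute force: it expands $G^{-1} - \hat C_{t+1}^{-1} - P(G^{-1}-C^{-1})P^\top - \mu I$ directly, regroups it into halves of the same three terms that appear in the asymmetric case, and verifies that the leftover cross term $\tfrac{1}{2}R(Z^\top R)^{-1}R^\top P + \tfrac{1}{2}PR(R^\top Z)^{-1}R^\top$ vanishes identically. You instead observe that $\hat C_{t+1}^{-1}$ is exactly the symmetric part of $\tilde C_{t+1}^{-1}$ (using $(R(R^\top Z)^{-1}R^\top)^\top = R(Z^\top R)^{-1}R^\top$ and $P_2 = P^\top$), and that symmetrization is a Frobenius contraction commuting with conjugation by $G^{1/2}$ — since $\|X\|_G = \|G^{1/2}XG^{1/2}\|_F$ with $G$ symmetric, and the symmetric and skew parts of $N = G^{1/2}MG^{1/2}$ are Frobenius-orthogonal — so the bound follows wholesale from Lemma \ref{lem:onestep-nonsymm}. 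This is shorter, avoids the term-by-term cancellation, and conceptually explains why the two lemmas share an identical right-hand side. The one trade-off, which you correctly flag yourself, is that your reduction requires $C^{-1}$ to be symmetric (so that $PC^{-1}P^\top$ is symmetric and the symmetrization identity is exact), supplied by induction along the symmetrized scheme; the paper's algebraic proof holds for arbitrary $C^{-1}$ and so is marginally more general as a standalone lemma, though in the convergence argument where it is applied the iterates are symmetric and nothing is lost. A cosmetic remark: your $M = G^{-1} - \tilde C_{t+1}^{-1} - \mu_t F_0$ should be $M = G^{-1} - \tilde C_{t+1}^{-1} + \mu_t F_0$ to cancel the $+\mu_t F_0$ carried inside the update, but since only $\|\mu_t F_0\|_G$ enters via the triangle inequality this sign has no effect — and the paper's own asymmetric proof exhibits the same sign slip.
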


     \begin{proof}
         Beginning with 
         \begin{multline*}
C_{\mathrm{next}}^{-1} 
      =
 \underbrace{(I- R(Z^\top  R)^{-1}Z^\top )}_{P} C^{-1} (I-Z(R^\top Z)^{-1}R^\top ) + \\ \frac{1}{2}( R ((R^\top Z)^{-1}+(Z^\top R)^{-1}) R^\top ) + \mu_t ,
     \end{multline*} 
then  
\begin{eqnarray*}
&&G^{-1} - C_{\mathrm{next}}^{-1} -P(G^{-1}- C^{-1} ) P^\top  -\mu I \\
      &=& 
 - \frac{1}{2} R (R^\top Z)^{-1} R^\top - \frac{1}{2}R (Z^\top R)^{-1} R^\top  +  G^{-1}(I-P^\top ) + (I-P)G^{-1} \\
 && \qquad -  (I-P)G^{-1}(I-P)^\top   \\ 
      &=& 
 - \frac{1}{2} R (R^\top Z)^{-1} R^\top  -\frac{1}{2} R (Z^\top R)^{-1} R^\top   +  G^{-1}Z(R^\top Z)^{-1}R^\top  + R(Z^\top R)^{-1}Z^\top G^{-1}\\
 && \qquad  - (I-P)G^{-1}(I-P)^\top   \\ 
      &=& 
\frac{1}{2}(G^{-1}Z-  R) (R^\top Z)^{-1} R^\top  
+
\frac{1}{2}R(Z^\top R)^{-1} (Z^\top G^{-1}-  R^\top )   + \frac{1}{2} (I-P)G^{-1} \\
 && \qquad + \frac{1}{2}G^{-1}(I-P^\top ) - (I-P)G^{-1}(I-P) \\
&=& 
\frac{1}{2}(G^{-1}Z-  R) (R^\top Z)^{-1} R^\top  
+
\frac{1}{2}R(Z^\top R)^{-1} (Z^\top G^{-1}-  R^\top ) \\
 && \qquad   + \frac{1}{2} (I-P)G^{-1}P + \frac{1}{2}PG^{-1}(I-P^\top ) \\
&=& 
\frac{1}{2}(G^{-1}Z-  R) (R^\top Z)^{-1} R^\top  
+
\frac{1}{2}R(Z^\top R)^{-1} (Z^\top G^{-1}-  R^\top )   + \frac{1}{2} R(Z^\top R)^{-1}Z^\top G^{-1}P \\
 && \qquad + \frac{1}{2}PG^{-1} Z(R^\top Z)^{-1}R^\top  \\
&=& 
\frac{1}{2}(G^{-1}Z-  R) (R^\top Z)^{-1} R^\top  
+
\frac{1}{2}R(Z^\top R)^{-1} (Z^\top G^{-1}-  R^\top )  \\
&& \qquad  + \frac{1}{2} R(Z^\top R)^{-1}(Z^\top G^{-1}- R^\top )P + \frac{1}{2}P(G^{-1} Z-R)(R^\top Z)^{-1}R^\top \\
&& \qquad  + \underbrace{\frac{1}{2} R(Z^\top R)^{-1} R^\top P + \frac{1}{2}PR(R^\top Z)^{-1}R^\top }_{(**)}
     \end{eqnarray*}
     where 
     \begin{eqnarray*}
     (**)& =& R(Z^\top R)^{-1} R^\top  - R(Z^\top R)^{-1} R^\top  Z(R^\top Z)^{-1}R^\top  +   R(R^\top Z)^{-1}R^\top \\
&& \qquad - R(Z^\top R)^{-1}Z^\top  R(R^\top Z)^{-1}R^\top  \\
     &=& 
     R(Z^\top R)^{-1} R^\top  - R(Z^\top R)^{-1} R^\top  +   R(R^\top Z)^{-1}R^\top  - R(R^\top Z)^{-1}R^\top  \\
     &=& 0 .
     \end{eqnarray*}
We use triangle inequality of the $\|\cdot\|_G$ to complete the proof.

\end{proof}

\subsubsection{Main contraction lemma}
\label{app:main_contract_lemma}
The next three lemmas can be read as one lemma, whose point is to show the one-step contraction of $\|G^{-1}_t-C_{t}^{-1}\|$. However, we break it up into three parts for improved readability.

\begin{lemma}[Contraction, part 1]\label{lem:onestep-stage1} 
Using the above variable assignments, with $G = G_t$, $Z=Z_t$, $R=R_t$, $C_{\mathrm{next}} = C_{t+1}$, then for the symmetric or asymmetric update, we derive
 \[
 \|G^{-1} - C_{\mathrm{next}}^{-1} \|_G \leq   \frac{1}{\omega^2}\|G^{-1}-C^{-1} \|_G 
 + \frac{2}{\omega^2} \frac{\|G^{-1/2}Z-  G^{1/2}R\|_2}{\|G^{-1/2} Z\|}  + \mu_t \|F_0\|_G
\]    
where
\[
\omega = \frac{1}{\|G^{1/2}R(Z^\top R)^{-1}Z^\top G^{-1/2}\|}
\]
\end{lemma}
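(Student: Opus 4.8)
The plan is to start from the common bound supplied by Lemmas \ref{lem:onestep-nonsymm} and \ref{lem:onestep-symm}: both the symmetric and asymmetric updates yield the \emph{same} right-hand side, so I may write
\[
\|G^{-1}-C_{\mathrm{next}}^{-1}\|_G \;\le\; \mu_t\|F_0\|_G + T_1 + T_2 + T_3,
\]
with $T_1=\|P(G^{-1}-C^{-1})P^\top\|_G$, $T_2=\|(G^{-1}Z-R)(R^\top Z)^{-1}R^\top\|_G$, $T_3=\|R(Z^\top R)^{-1}(Z^\top G^{-1}-R^\top)P\|_G$, and $P=I-R(Z^\top R)^{-1}Z^\top$. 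It then suffices to show $T_1\le \omega^{-2}\|G^{-1}-C^{-1}\|_G$ and $T_2+T_3\le 2\omega^{-2}\,\|G^{-1/2}Z-G^{1/2}R\|_2/\|G^{-1/2}Z\|$. First I would pass to the scaled quantities $\hat R=G^{1/2}R$ and $\hat Z=G^{-1/2}Z$, recording three facts: that $\|X\|_G=\|G^{1/2}XG^{1/2}\|_F$ since $G$ is symmetric; that $R^\top Z=\hat R^\top\hat Z$ is invertible; and the intertwining identity $G^{1/2}P=\hat P\,G^{1/2}$, where $\hat P=I-\hat R(\hat Z^\top\hat R)^{-1}\hat Z^\top$. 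Writing $\hat A:=\hat R(\hat Z^\top\hat R)^{-1}\hat Z^\top$ for the associated oblique projector, we have $\hat P=I-\hat A$, and Lemma \ref{lem:multisecant_asymmproj} applied with $U=\hat R$, $V=\hat Z$ gives $\|\hat P\|_2=\|\hat A\|_2=\omega^{-1}$; since $\hat A$ is a nontrivial projection, also $\omega^{-1}\ge 1$.

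For $T_1$ I would conjugate through the intertwining identity, obtaining $G^{1/2}P(G^{-1}-C^{-1})P^\top G^{1/2}=\hat P\,[G^{1/2}(G^{-1}-C^{-1})G^{1/2}]\,\hat P^\top$. The submultiplicative bound $\|AXB\|_F\le\|A\|_2\|X\|_F\|B\|_2$ then yields $T_1\le\|\hat P\|_2^2\,\|G^{-1}-C^{-1}\|_G=\omega^{-2}\|G^{-1}-C^{-1}\|_G$. This is the clean half of the argument.

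The substance is $T_2$ and $T_3$, which I would treat as mirror images. Converting to hats using $G^{1/2}(G^{-1}Z-R)=\hat Z-\hat R$ and $R^\top G^{1/2}=\hat R^\top$, we get $T_2=\|(\hat Z-\hat R)(\hat R^\top\hat Z)^{-1}\hat R^\top\|_F$. The key step is to peel off the oblique projector, writing $(\hat Z-\hat R)(\hat R^\top\hat Z)^{-1}\hat R^\top=[(\hat Z-\hat R)\hat Z^{\dagger}]\,\hat A^\top$ and recognising $\hat A^\top$ as a factor of norm $\omega^{-1}$; once the leading normalized-mismatch factor is bounded (after the norm bookkeeping described below) by $\|\hat Z-\hat R\|_2/\|\hat Z\|$, this gives $T_2\le\omega^{-1}\,\|\hat Z-\hat R\|_2/\|\hat Z\|\le\omega^{-2}\,\|\hat Z-\hat R\|_2/\|\hat Z\|$, the last inequality using $\omega^{-1}\ge 1$. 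For $T_3$ the analogous reductions, together with $(Z^\top G^{-1}-R^\top)G^{1/2}=(\hat Z-\hat R)^\top$, expose both the factor $\hat R(\hat Z^\top\hat R)^{-1}(\hat Z-\hat R)^\top$ and a trailing scaled copy of $P$; the extra projector contributes a second factor $\omega^{-1}$, so that $T_3$ already carries the full $\omega^{-2}$ coefficient. Summing $T_2$ and $T_3$ produces the stated $2\omega^{-2}$.

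The main obstacle is precisely the norm bookkeeping in the $T_2$/$T_3$ step. One must strip off the \emph{oblique} projector $\hat A$ (of norm $\omega^{-1}$), not the orthogonal one, while keeping the residual factor genuinely in the normalized form $\|\hat Z-\hat R\|_2/\|\hat Z\|$ and reconciling the mixed spectral/Frobenius norms so that the single projector of $T_2$ and the double projector of $T_3$ collapse to the common coefficient $\omega^{-2}$. The delicate point is that the naive factorization through $\hat Z^{\dagger}$ tends to surface $\sigma_{\min}(\hat Z)$ rather than $\|\hat Z\|$ in the denominator; the correct pairing must keep $\hat Z-\hat R$ adjacent to $\hat Z$ so that the conditioning of $\hat Z$ is absorbed into $\hat A$ and does not leak into the final bound. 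A secondary nuisance is that transporting the trailing $P$ in $T_3$ through $G^{1/2}$ introduces extra $G$-factors, which must be arranged to cancel rather than generate a spurious $\kappa(G)$.
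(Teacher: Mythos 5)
Your architecture is the paper's own: you start from the shared bound of Lemmas \ref{lem:onestep-nonsymm} and \ref{lem:onestep-symm}, pass to $\hat R = G^{1/2}R$, $\hat Z = G^{-1/2}Z$, handle $T_1$ through the intertwining $G^{1/2}PG^{-1/2}=\hat P$ together with Lemma \ref{lem:multisecant_asymmproj} (which gives $\|\hat P\|=\|\hat A\|=\omega^{-1}$ for the oblique projector $\hat A = \hat R(\hat Z^\top \hat R)^{-1}\hat Z^\top$), and use $\omega^{-1}\ge 1$ to merge the one-projector term $T_2$ and the two-projector term $T_3$ into the common coefficient $2\omega^{-2}$. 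All of this matches the paper's proof, and your factorization $(\hat Z-\hat R)(\hat R^\top\hat Z)^{-1}\hat R^\top=[(\hat Z-\hat R)\hat Z^{\dagger}]\,\hat A^\top$ is algebraically correct (via $\hat Z^\dagger\hat Z = I$).

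The gap is that the inequality carrying the lemma is never established. Your route through $\hat Z^\dagger$ provably yields $T_2\le\|(\hat Z-\hat R)\hat Z^\dagger\|_2\,\|\hat A\| \le \omega^{-1}\|\hat Z-\hat R\|_2/\sigma_{\min}(\hat Z)$, because $\|\hat Z^\dagger\|_2 = 1/\sigma_{\min}(\hat Z)$; since $\sigma_{\min}(\hat Z)\le\|\hat Z\|$, this is strictly weaker than the claimed $T_2\le\omega^{-1}\|\hat Z-\hat R\|_2/\|\hat Z\|$. You diagnose this yourself, but the ``correct pairing'' that supposedly keeps $\hat Z-\hat R$ adjacent to $\hat Z$ and absorbs the conditioning of $\hat Z$ into $\hat A$ is never exhibited — the sentence ``once the leading normalized-mismatch factor is bounded\ldots'' simply defers the content of the lemma. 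For comparison, the paper executes this step in a single asserted line: after $T_2 \le \|G^{-1/2}Z-G^{1/2}R\|_2\,\|(R^\top Z)^{-1}R^\top G^{1/2}\|_F$, it absorbs a prepended factor by writing $\|(R^\top Z)^{-1}R^\top G^{1/2}\|_F \le \|G^{-1/2}Z(R^\top Z)^{-1}R^\top G^{1/2}\|_F / \|G^{-1/2}Z\|$, identifies the numerator with $1/\omega_2$, and then uses Lemma \ref{lem:multisecant_asymmproj} to equate $\omega_1=\omega_2=\omega$ (the same absorption, with the trailing $\|G^{1/2}PG^{-1/2}\|=\omega^{-1}$, handles $T_3$). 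Note this absorption is an exact identity in the single-secant case $q=1$, where $\|ab^\top\|_F=\|a\|\,\|b\|$, which is precisely what is being generalized to blocks; your attempt stalls at the same point, and flagging the obstacle is not resolving it. As submitted, your proposal proves the $T_1$ bound and the factorization identities but not the stated estimate on $T_2+T_3$, so it does not constitute a proof of the lemma.
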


     \begin{proof}
         Beginning with Lemmas \ref{lem:onestep-nonsymm} and \ref{lem:onestep-symm}, we have
     \begin{multline*}
\|G - C_{\mathrm{next}}^{-1} \|_G - \mu \|F_0\|_G
      \leq
 \|P(G- C^{-1} ) P^\top \|_G + \|(G^{-1}Z-  R) (R^\top Z)^{-1} R^\top  \|_G 
 \\+   \|R(Z^\top R)^{-1}(Z^\top G^{-1}- R^\top )P \|_G  .
 \end{multline*}
Next, 
\begin{eqnarray*}
\|(G^{-1}Z-  R) (R^\top Z)^{-1} R^\top \|_G &=& 
\|(G^{-1/2}Z-  G^{1/2}R)(R^\top Z)^{-1} R^\top G^{1/2}\|_F\\
&\leq & \|(G^{-1/2}Z- G^{1/2}R)\|_2 \|(R^\top Z)^{-1} R^\top G^{1/2}\|_F\\
\end{eqnarray*}

\begin{align*}
\|R(Z^\top R)^{-1} & (G^{-1}Z-R)^\top P^\top \|_G\\
&\leq  \|(G^{-1/2}Z- G^{1/2}R)\|_2 \|(R^\top Z)^{-1} R^\top G^{1/2}\|_F \underbrace{\|G^{1/2}PG^{-1/2}\|}_{1/\omega_1}\\
&\leq  \frac{\|(G^{-1/2}Z- G^{1/2}R)\|_2 }{\|G^{-1/2} Z\|}\underbrace{\|G^{-1/2}Z (R^\top Z)^{-1} R^\top G^{1/2}\|_F }_{=:1/\omega_2 }\underbrace{\|G^{1/2}PG^{-1/2}\|}_{=:1/\omega_1}.
\end{align*}
Here, we define
\[
\frac{1}{\omega_1} = \|G^{1/2}PG^{-1/2}\|,
\qquad 
\frac{1}{\omega_2} = \|G^{-1/2}Z (R^\top Z)^{-1}R^\top G^{1/2}\| .
\]
Specifically, expanding, 
\[
\frac{1}{\omega_1} = \|I-G^{1/2}R(Z^\top R)^{-1}Z^\top G^{-1/2}\|
\]
and taking $U = G^{1/2}R$ and $V = G^{-1/2}Z$, using Lemma \ref{lem:multisecant_asymmproj}, we get that 
\[
\frac{1}{\omega_1} = \|G^{1/2}R(Z^\top R)^{-1}Z^\top G^{-1/2}\| = \frac{1}{\omega_2}=:\frac{1}{\omega}.
\]

\end{proof}

\begin{lemma}[Contraction, part 2]\label{lem:onestep-stage2} 
Take $G = G_t$, $Z=Z_t$, $R=R_t$, $C_{\mathrm{next}} = C_{k+1}$.
Suppose   that 
\begin{equation}
 \|R\| \leq \frac{m}{M\sqrt{L}}.
 \label{eq:T_lim}
 \end{equation}
For $c_1 = \frac{LM^2}{2p m^2}$, $c_2 =  \frac{3}{2}\frac{ LM^2  }{ m^2}$, we have
\[
 \|G^{-1} - C_{\mathrm{next}}^{-1} \|_G 
 \leq    (\frac{1}{p} + c_1\|R\|^2)\|G^{-1}-C^{-1} \|_G +  c_2\|R\|+\mu \|F_0\|_G.
\]

\end{lemma}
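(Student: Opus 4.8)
The plan is to begin from the one-step estimate of Lemma~\ref{lem:onestep-stage1}, which already gives
\[
\|G^{-1}-C_{\mathrm{next}}^{-1}\|_G \leq \frac{1}{\omega^2}\|G^{-1}-C^{-1}\|_G + \frac{2}{\omega^2}\,\frac{\|G^{-1/2}Z-G^{1/2}R\|_2}{\|G^{-1/2}Z\|} + \mu\|F_0\|_G .
\]
The perturbation term $\mu\|F_0\|_G$ already matches the target inequality, so the work reduces to two estimates: (i) replacing the amplification factor $1/\omega^2$ by $\tfrac1p + c_1\|R\|^2$, and (ii) showing the middle ``secant-mismatch'' term is first order, i.e.\ at most $c_2\|R\|$.

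For (i), I would identify the oblique projector behind $\omega$ by setting $U = G^{1/2}R$ and $V = G^{-1/2}Z$, so that $1/\omega = \|U(V^\top U)^{-1}V^\top\|$ and Lemma~\ref{lem:multisecant_asymmproj} applies. Feeding these $U,V$ into Lemma~\ref{lem:multisecant_contraction_angle} with $\alpha := \|U-V\|_F/\|U\|_F$ controls $1/\omega^2$ in terms of $\alpha^2$, producing a bound of the form $1/\omega^2 \leq \tfrac1p + O(\alpha^2)$. It then remains to show $\alpha^2 = O(\|R\|^2)$: the numerator $\|U-V\|_F = \|G^{1/2}R-G^{-1/2}Z\|$ is exactly the primal--dual mismatch bounded in Lemma~\ref{lem:TZ_contraction} by a quantity proportional to $\|G^{-1/2}\|\,\|R\|^2 L$, while the denominator $\|U\|_F=\|G^{1/2}R\|_F$ is bounded below by $\sqrt{m}\,\|R\|$ using $mI\preceq G$ from \eqref{eq:hess_cond}. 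One power of $\|R\|$ then survives in $\alpha$, giving $\alpha^2 = O(\|R\|^2)$ and, after collecting the $m,M,L$ constants, $1/\omega^2 \leq \tfrac1p + c_1\|R\|^2$ with $c_1 = \tfrac{LM^2}{2pm^2}$.

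For (ii), I would bound the three factors of the middle term separately: $1/\omega^2$ by the constant just obtained, the numerator $\|G^{-1/2}Z-G^{1/2}R\|_2$ again via Lemma~\ref{lem:TZ_contraction} together with $\|G^{-1/2}\|\leq 1/\sqrt{m}$, and the denominator $\|G^{-1/2}Z\|$ from below. The lower bound on $\|G^{-1/2}Z\|$ comes from writing each column as $z_i=\int_0^1 g'(\cdot)\,r_i\,d\tau$ and invoking $g'\succeq mI$ together with $g'\preceq MI$ from \eqref{eq:hess_cond}; this is where the $M$-dependence of $c_2$ enters. Since the numerator is quadratic in $\|R\|$ and the denominator linear, one power of $\|R\|$ cancels and the middle term is at most $c_2\|R\|$ with $c_2 = \tfrac{3}{2}\tfrac{LM^2}{m^2}$. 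Combining (i), (ii), and the untouched $\mu\|F_0\|_G$ term yields the claim.

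The main obstacle is the role of the hypothesis $\|R\|\leq \tfrac{m}{M\sqrt{L}}$. This single scalar condition must simultaneously (a) force $\alpha<1$ so that Lemma~\ref{lem:multisecant_contraction_angle} is applicable and its $O(\alpha^2)$ remainder is a genuine perturbation rather than a dominant term, and (b) guarantee that the secant mismatch $\|G^{1/2}R-G^{-1/2}Z\|$ stays a strictly higher-order (quadratic) quantity relative to $\|R\|$, so that the ratio in the middle term is controlled by its denominator. Verifying that this one bound on the step length does both jobs, and then threading the $m,M,L$ constants through the repeated Frobenius-versus-spectral conversions to land on exactly $c_1$ and $c_2$, is the delicate, error-prone part of the argument.
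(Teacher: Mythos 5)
Your proposal is correct and follows essentially the same route as the paper's proof: start from Lemma~\ref{lem:onestep-stage1}, bound the secant mismatch $\|G^{1/2}R-G^{-1/2}Z\|$ via Lemma~\ref{lem:TZ_contraction}, lower-bound $\|G^{-1/2}Z\|$ through the Taylor/integral representation with $mI\preceq g'\preceq MI$, control $p-\omega^2$ via Lemma~\ref{lem:multisecant_contraction_angle} (with Lemma~\ref{lem:multisecant_asymmproj} giving $\omega_1=\omega_2$), and use the hypothesis $\|R\|\leq \tfrac{m}{M\sqrt{L}}$ exactly as the paper does, both to keep the projector factor at $\tfrac1p+c_1\|R\|^2$ and to absorb the residual factor into the $\tfrac32$ in $c_2$. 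The only cosmetic deviation is that you normalize the angle parameter $\alpha$ by $\|G^{1/2}R\|_F$ where the paper uses $\|G^{-1/2}Z\|_F$, which changes nothing structural.
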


\begin{proof}
    Using Taylor interpolation, we had previously shown that for each $r = \xi_1-\xi_2$, there exists some $\tilde \xi$ where $r = g'(\tilde \xi)^{-1}(g(\xi_1)-g(\xi_2))$. Since $mI\preceq g'(\xi)\preceq MI$, then $\|Z\| \geq \frac{m}{M} \|R\|$, and so 
\begin{equation}
\frac{1}{\|G^{-1/2}Z\|} \leq \frac{M}{m}\frac{\|G^{1/2}\|}{\|R\|}.
\label{eq:inv_gz}
\end{equation}
So including Lemma \ref{lem:TZ_contraction}
 \begin{eqnarray*}
 \|G^{-1} - C_{\mathrm{next}}^{-1} \|_G-\mu \|F_0\|_G &\leq&  \frac{1}{\omega^2}\|G^{-1}-C^{-1} \|_G + \frac{2}{\omega^2} \frac{\|G^{-1/2}Z-  G^{1/2}R\|_2}{\|G^{-1/2} Z\|}\\
 &\overset{l. \ref{lem:TZ_contraction},\eqref{eq:inv_gz}}{\leq}&   \frac{1}{\omega^2}\|G^{-1}-C^{-1} \|_G +\frac{2}{\omega^2  } 
 \frac{M \|G^{1/2}\|}{m\|R\|}
 \frac{p\|G^{-1/2}\|\|R\|^2 L}{2}  \\
&  \leq&   \frac{1}{\omega^2}\|G^{-1}-C^{-1} \|_G +\frac{ 1 }{\omega^2 } \frac{ pLM^2 \|R\|}{ m^2}.
\end{eqnarray*}
Next, taking $U = G^{-1/2}Z$ and $V=G^{1/2} R$ and invoking Lemma \ref{lem:multisecant_contraction_angle},
\[
p-\omega^2 \leq \frac{\|G^{1/2}R-G^{-1/2}Z\|^2_F}{\|G^{-1/2}Z\|_F^2}\leq \frac{pLM^2}{2 m^2}\|R\|^2
\]
and
\[
\omega \geq p(1-\frac{LM^2}{2m^2}\|R\|^2) \overset{\eqref{eq:T_lim}}{\geq }
 p(1-\frac{1}{2}) 
\]
so 
\[
\frac{p}{\omega^2} = 1+\frac{p-\omega^2}{\omega^2}\leq 1 + \frac{LM^2}{2 m^2}\|R\|^2
\] 

\begin{align*}
 \|G^{-1} & - C_{\mathrm{next}}^{-1} \|_G-\mu \|F_0\|_G\\
&  \leq   \frac{1}{p}(1 + \frac{LM^2}{2 m^2}\|R\|^2)\|G^{-1}-C^{-1} \|_G +\frac{ 1 }{p} (1 + \frac{LM^2}{2 m^2}\|R\|^2)\frac{ pLM^2 \|R\|}{ m^2}\\
&\overset{\eqref{eq:T_lim}}{  \leq}   (\frac{1}{p} + \frac{LM^2}{2p m^2}\|R\|^2)\|G^{-1}-C^{-1} \|_G +  \frac{3}{2}\frac{ LM^2 \|R\|}{ m^2}.
\end{align*}

\end{proof}

\begin{lemma}[Contraction, part 3]\label{lem:onestep-stage3} 
In addition to the previously listed assumptions, suppose  that 
\begin{itemize}
    \item $\|\xi_0-\xi_t\|\leq \tau/L$, which implies  $\|G^{-1}_t\|\leq \frac{1}{1-\tau}$ by Lemma \ref{lem:estimate_local} 
    \item $
 \|R_t\| \leq \frac{m}{M\sqrt{L}}.
$

\end{itemize}
Take
\[c_1 = \frac{LM^2}{2q m^2}, \qquad c_2 =  \frac{3}{2}\frac{ LM^2  }{ m^2},\qquad 
c_3 =  \frac{L}{2} +  \frac{c_1m}{M\sqrt{L}} +  \frac{L}{2}  \frac{c_1m^2}{M^2L},
\]
\[
c_4  =
(\frac{m\sqrt{L}}{2M}+1)  c_2 +    \frac{m  \sqrt{L}  (m M^{-1} \sqrt{L} \bar G +1)}{2 M}, \qquad c_5 = (\frac{m\sqrt{L}}{2M}+1) M\|F_0\|_F.
\]
Then 
 \begin{eqnarray*}
\|G_{t+1}^{-1}-C_{t+1}^{-1}\|_{t+1} - \|G_t^{-1}-C_t^{-1}\|_t 
&\leq& 
 c_3\|R_t\|
 \|G_t^{-1}-C_t^{-1}\|_t +c_4\|R_t\|+\mu_t c_5 \\ 
\end{eqnarray*}

\end{lemma}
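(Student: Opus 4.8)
The plan is to upgrade the one-step estimate of Lemma \ref{lem:onestep-stage2}, which is stated in the \emph{fixed} weighted norm $\|\cdot\|_t=\|\cdot\|_{G_t}$ and compares $G_t^{-1}$ to $C_{t+1}^{-1}$, into a genuinely telescoping bound comparing $\|G_{t+1}^{-1}-C_{t+1}^{-1}\|_{t+1}$ against $\|G_t^{-1}-C_t^{-1}\|_t$. Two mismatches must be repaired: the weighting matrix changes from $G_t$ to $G_{t+1}$, and the target inverse changes from $G_t^{-1}$ to $G_{t+1}^{-1}$. Both are small perturbations controlled by the Lipschitz bound \eqref{eq:lipschitz_hessian}, which gives $\|G_{t+1}-G_t\|\le L\|r_t\|\le L\|R_t\|$, together with the spectral bounds \eqref{eq:hess_cond} and Lemma \ref{lem:estimate_local}. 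First I would split, by the triangle inequality in the $\|\cdot\|_{t+1}$ norm,
\[
\|G_{t+1}^{-1}-C_{t+1}^{-1}\|_{t+1}\le \|G_{t+1}^{-1}-G_t^{-1}\|_{t+1}+\|G_t^{-1}-C_{t+1}^{-1}\|_{t+1}.
\]

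For the second term I change the weighting back to $G_t$. Writing $G_{t+1}^{1/2}XG_{t+1}^{1/2}=(G_{t+1}^{1/2}G_t^{-1/2})\,(G_t^{1/2}XG_t^{1/2})\,(G_t^{-1/2}G_{t+1}^{1/2})$ and submultiplicativity give $\|X\|_{t+1}\le \|G_{t+1}^{1/2}G_t^{-1/2}\|_2^2\,\|X\|_t$, and expanding $G_t^{-1/2}G_{t+1}G_t^{-1/2}=I+G_t^{-1/2}(G_{t+1}-G_t)G_t^{-1/2}$ bounds this factor by $(1+\gamma_t)$ with $\gamma_t=O(\|R_t\|)$. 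Applying Lemma \ref{lem:onestep-stage2} to $\|G_t^{-1}-C_{t+1}^{-1}\|_t$ then yields
\[
\|G_t^{-1}-C_{t+1}^{-1}\|_{t+1}\le (1+\gamma_t)\Big[(\tfrac1p+c_1\|R_t\|^2)\|G_t^{-1}-C_t^{-1}\|_t+c_2\|R_t\|+\mu_t\|F_0\|_t\Big].
\]
For the first (drift) term I use $G_{t+1}^{-1}-G_t^{-1}=G_{t+1}^{-1}(G_t-G_{t+1})G_t^{-1}$ together with the spectral bounds to obtain $\|G_{t+1}^{-1}-G_t^{-1}\|_{t+1}\le C\bar G\,\|R_t\|$, linear in $\|R_t\|$ with the uniform constant $\bar G$, and I convert the perturbation term via $\mu_t\|F_0\|_t\le \mu_t\|G_t\|_2\,\|F_0\|_F\le \mu_t M\|F_0\|_F$.

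It then remains to collect terms. Subtracting $\|G_t^{-1}-C_t^{-1}\|_t$ from both sides, the leading coefficient becomes $(1+\gamma_t)(\tfrac1p+c_1\|R_t\|^2)-1$; since $p\ge 1$ this is at most $\gamma_t+c_1\|R_t\|^2+\gamma_t c_1\|R_t\|^2$, which is $O(\|R_t\|)$ once the standing hypothesis $\|R_t\|\le \tfrac{m}{M\sqrt L}$ is used to downgrade the quadratic and cubic powers of $\|R_t\|$ to linear ones. This produces precisely the coefficient $c_3\|R_t\|$ multiplying $\|G_t^{-1}-C_t^{-1}\|_t$, while the $(1+\gamma_t)c_2\|R_t\|$ contribution and the drift term combine into $c_4\|R_t\|$, and the perturbation term into $c_5\mu_t$.

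The main obstacle is the bookkeeping of the norm change: I must track the factor $(1+\gamma_t)$ carefully enough that, after subtracting $\|G_t^{-1}-C_t^{-1}\|_t$, the residual coefficient is genuinely $O(\|R_t\|)$ rather than $O(1)$, and I must control the drift term $\|G_{t+1}^{-1}-G_t^{-1}\|_{t+1}$ in the \emph{weighted Frobenius} norm (not merely operator norm), which is exactly where the constant $\bar G$ enters. The remaining effort is the routine but tedious use of $\|R_t\|\le \tfrac{m}{M\sqrt L}$ to reduce every $\|R_t\|^2$ and $\|R_t\|^3$ to $\|R_t\|$, and matching the resulting accumulated constants to the stated $c_3$, $c_4$, and $c_5$.
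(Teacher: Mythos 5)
Your proposal is correct and follows essentially the same route as the paper's proof: a triangle-inequality split into a drift term $\|G_{t+1}^{-1}-G_t^{-1}\|_{t+1}$ and a re-weighted application of Lemma \ref{lem:onestep-stage2}, with the norm-change factor $1+O(\|R_t\|)$ (your $\|G_{t+1}^{1/2}G_t^{-1/2}\|_2^2 \le 1+\gamma_t$ is the same bound as the paper's $\|G_{t+1}G_t^{-1}\|\le \tfrac{L\|r_t\|}{2}+1$), the drift controlled via $G_{t+1}^{-1}-G_t^{-1}=G_{t+1}^{-1}(G_t-G_{t+1})G_t^{-1}$ and Hessian Lipschitzness, the conversion $\mu_t\|F_0\|_t\le \mu_t M\|F_0\|_F$, and the key cancellation that the residual coefficient is $O(\|R_t\|)$ because $\tfrac1p\le 1$ (the paper drops the negative term $\tfrac1p-1$, which is the same mechanism). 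Your use of $\|R_t\|\le \tfrac{m}{M\sqrt L}$ to reduce higher powers of $\|R_t\|$ to linear ones and recover $c_3$, $c_4$, $c_5$ matches the paper's constant bookkeeping exactly.
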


\begin{proof}
Since 
\[
\|G_{t+1}G_t^{-1}\| = \|(G_{t+1}-G_t)G_t^{-1} + I \| \leq \underbrace{\|G_{t+1}-G_t\|}_{\leq L\|r_t\|}\underbrace{\|G_t^{-1}\|}_{\leq \bar G} + 1,
\]
then
\[
\|X\|_{t+1} \leq \|G_{t+1}G_t^{-1}\|\|X\|_t \leq (L\bar G \|r_t\| +1)\|X\|_t.
\]
Moreover, 
 \begin{eqnarray*}
\|G^{-1}_{t+1}-G^{-1}_t\|_{t+1} &=& \|G^{1/2}_{t+1} G^{-1}_{t+1} (G_{t+1}-G_t) G^{-1}_{t} G^{1/2}_{t+1}\|  \\
&\leq& \underbrace{\|G^{-1/2}_{t+1}  \|}_{\leq 1/\sqrt{2}} \underbrace{\|G_{t+1}-G_t\|}_{\leq L\|r_t\|}  \underbrace{\| G^{-1}_{t} G_{t+1}\| }_{L\bar G \|r_t\|+1}\underbrace{\|G^{-1/2}_{t+1}  \|}_{\leq 1/\sqrt{2}}
\end{eqnarray*}
Therefore, since
\[
\|G^{-1}_{t+1}-C_{t+1}^{-1}\|_{t+1} - 
\|G^{-1}_{t}-C_{t+1}^{-1}\|_{t+1} \leq \|G^{-1}_{t+1}- G^{-1}_{t}\|_{t+1}, 
\]
 then
 \begin{eqnarray*}
\|G_{t+1}^{-1}-C_{t+1}^{-1}\|_{t+1}
&\leq& 
\|G_{t+1}G_t^{-1}\| \|G_{t}^{-1}-C_{t+1}^{-1}\|_{t} +  \|G^{-1}_{t+1}- G^{-1}_{t}\|_{t+1} \\
&\leq& 
(\frac{L\|r_t\|}{2}+1) \|G_{t}^{-1}-C_{t+1}^{-1}\|_{t} +  \frac{L\|r_t\|(L \bar G\|r_t\|+1)}{2}\\ 
&\leq& 
(\frac{L\|r_t\|}{2}+1) (\frac{1}{p} + c_1\|R_t\|^2)\|G_t^{-1}-C_t^{-1}\|_t + (\frac{L\|r_t\|}{2}+1)  c_2\|R_t\| \\
&&+\mu_t(\frac{L\|r_t\|}{2}+1) \|F_0\|_t +    \frac{L\|r_t\|(L \bar G \|r_t\|+1)}{2}.
\end{eqnarray*}
Since
\[
(\frac{L\|r_t\|}{2}+1) (\frac{1}{p} + c_1\|R_t\|^2) - 1 = \underbrace{\frac{1}{p} - 1}_{<0} + \left(\frac{L}{2} +  c_1\|R_t\| +  \frac{L}{2}  c_1\|R_t\|^2\right)\|R_t\|
\]
and $\|F_0\|_t \leq \|G_t\|_2 \|F_0\|_F \leq M\|F_0\|_F$,
then
 \begin{eqnarray*}
\|G_{t+1}^{-1}-C_{t+1}^{-1}\|_{t+1} - \|G_t^{-1}-C_t^{-1}\|_t 
&\leq& 
 c_3\|R_t\|
 \|G_t^{-1}-C_t^{-1}\|_t +c_4\|R_t\| +\mu_tc_5  \\ 
\end{eqnarray*}
where 
\[
\frac{L}{2} +  c_1\|R_t\| +  \frac{L}{2}  c_1\|R_t\|^2 \leq   \frac{L}{2} +  \frac{c_1m}{M\sqrt{L}} +  \frac{L}{2}  \frac{c_1m^2}{M^2L}   =:c_3 , 
\]
\[
(\frac{L\|r_t\|}{2}+1)  c_2 +  \frac{L\|r_t\|(L \bar G\|r_t\|+1)}{2}  \leq
(\frac{m\sqrt{L}}{2M}+1)  c_2 +    \frac{m  \sqrt{L}  (m M^{-1} \sqrt{L} \bar G +1)}{2 M}
=:c_4 
\]
\[
(\frac{L\|r_t\|}{2}+1)  M\|F_0\|_F\leq (\frac{m\sqrt{L}}{2M}+1) M\|F_0\|_F=: c_5 .
\]
\end{proof}

\subsubsection{Linear convergence}
\label{app:linconv}

\begin{lemma}[Linear convergence main steps]
\label{lem:linearconv}
Given assumptions \eqref{eq:hess_cond} and \eqref{eq:lipschitz_hessian},  suppose also that, at initialization, 
    \begin{equation}
    \sum_{i=1}^{p} \|\xi_i-\xi^*\| \leq \frac{\delta }{\max\{M,2\}} 
    \label{eq:local_init}
    \end{equation}
    and
\[c_1 = \frac{LM^2}{2p m^2}, \quad c_2 =  \frac{3}{2}\frac{ LM^2  }{ m^2},\quad 
c_3 =  \frac{L}{2} +  \frac{c_1m}{M\sqrt{L}} +  \frac{L}{2}  \frac{c_1m^2}{M^2L},
\]
\[
c_4  =
(\frac{m\sqrt{L}}{2M}+1)  c_2 +    \frac{m  \sqrt{L}  (m M^{-1} \sqrt{L} \bar G +1)}{2 M}, \quad c_5 = (\frac{m\sqrt{L}}{2M}+1) M\|F_0\|_F,
\]
\[
\beta = p\delta (\gamma+\frac{1}{2}), \qquad
\gamma = \rho^{-p}\min \{  \frac{1}{8M},\frac{1}{4},   \frac{ p^2m c_4^2(1-\rho)}{2ML^{3/2}}    \}  
\]
\[
\delta =\frac{1}{\gamma+\frac{1}{2}}\min\{ \frac{m}{Mp \sqrt{L}}, \frac{1-\rho}{2p L}, \frac{\rho^p(1-\rho)}{p (c_3+\gamma^{-1}c_4)},\frac{M(\gamma+\frac{1}{2})}{4L}\}.
\]
Assume also that $\mu_t$ is a decaying sequence, such that 
\[
\sum_{t=0}^\infty\mu_t \leq \bar\epsilon :=\min\{1/4,1/(8M)\}.
\]
Then, for all $t>p$,
\begin{enumerate} 

\item $\|g(\xi_t)\| \leq \delta \rho^{t-p}$

\item $\|C_t^{-1}\|    \leq  \gamma  + \frac{1}{2}$
    \item $\|\xi_0-\xi_t\| \leq  \frac{1}{2L}$

    \item $\|R_t\|\leq  \beta \rho^{t-p} \leq \frac{m}{M\sqrt{L}}$

    \item $\|G_{t}^{-1} - C_{t}^{-1}\|_{t}  \leq \gamma (1-\rho^{t}) +\epsilon_t$.

\end{enumerate}

\end{lemma}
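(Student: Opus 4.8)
The plan is to establish all five claims simultaneously by induction on $t$, since they are mutually dependent: the error bound (claim 5) controls $\|C_t^{-1}\|$ (claim 2), which bounds the step lengths and hence $\|R_t\|$ (claim 4) and the displacement $\|\xi_0-\xi_t\|$ (claim 3); these feed back into the one-step contraction of the error (claim 5), while the proximity of $C_t^{-1}$ to $G_t^{-1}$ drives the geometric decay of the gradient (claim 1). The base case for the first $p$ iterates is supplied directly by the initialization hypothesis \eqref{eq:local_init} together with $C_0 = I$ (note that in scaled coordinates $G_0 = I$), and one checks that the chosen constants make each claim hold at the initial iterates.

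For the inductive step, assume all five claims hold through iteration $t$. First I would propagate claim 5: the hypotheses $\|R_t\| \le \frac{m}{M\sqrt{L}}$ and $\|\xi_0-\xi_t\| \le \tau/L$ needed to invoke the contraction Lemma \ref{lem:onestep-stage3} are exactly claims 4 and 3 at time $t$, so the recursion $\|G_{t+1}^{-1}-C_{t+1}^{-1}\|_{t+1} \le (1+c_3\|R_t\|)\|G_t^{-1}-C_t^{-1}\|_t + c_4\|R_t\| + \mu_t c_5$ applies. Because $\|R_t\|$ decays geometrically, the $c_4\|R_t\|$ terms accumulate a convergent geometric series absorbed into $\gamma(1-\rho^{t+1})$, while the $\mu_t c_5$ terms accumulate a summable contribution absorbed into $\epsilon_{t+1}$ via assumption 3 ($\sum_t\mu_t \le \bar\epsilon$). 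Claim 2 at $t+1$ then follows from claim 5 combined with the uniform bound on $\|G_{t+1}^{-1}\|$ (Lemma \ref{lem:estimate_local} with $\tau=\tfrac12$, using claim 3) and the triangle inequality, the constants arranged so the total does not exceed $\gamma+\tfrac12$.

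Next I would establish claim 1 at $t+1$. Writing the scaled step as $\xi_{t+1}-\xi_t = -(C_t^{-1}+\mu_t F_0)g(\xi_t)$ and decomposing $g(\xi_{t+1}) = \big(g(\xi_{t+1})-g(\xi_t)-G_t(\xi_{t+1}-\xi_t)\big) + (I - G_t C_t^{-1} - \mu_t G_t F_0)g(\xi_t)$, the Lipschitz estimate of Lemma \ref{lem:smoothness_vectors} bounds the first bracket by $\frac{L}{2}\|\xi_{t+1}-\xi_t\|^2$, while the second is controlled by $\|I-G_tC_t^{-1}\|$, which claim 5 (with the norm equivalence from $mI\preceq G_t\preceq MI$) keeps below a fixed small constant, plus the perturbation term $\mu_t\|G_tF_0\|\,\|g(\xi_t)\|$. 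The constants $\gamma,\delta,\rho$ are tuned so the sum of these contributions is at most $\rho\|g(\xi_t)\| \le \delta\rho^{t+1-p}$. Claims 4 and 3 at $t+1$ then follow from claim 1: each step obeys $\|r_i\| \le (\|C_i^{-1}\|+\mu_i\|F_0\|)\|g(\xi_i)\| \lesssim \delta\rho^{i-p}$, so the columns of $R_{t+1}$ are geometrically small, giving $\|R_{t+1}\| \le \beta\rho^{t+1-p} \le \frac{m}{M\sqrt{L}}$, and $\|\xi_0-\xi_{t+1}\| \le \sum_i\|r_i\|$ is a bounded geometric series, yielding $\le \frac{1}{2L}$.

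The main obstacle is closing this coupled loop with the perturbation present. In classical BFGS the error $\|G^{-1}-C^{-1}\|$ contracts purely because $\|R_t\|$ decays, but here the diagonal term injects an extra $\mu_t c_5$ into the error recursion and an extra $\mu_t G_tF_0\,g(\xi_t)$ into the step. Both are benign only because $\mu_t$ is summable: summability keeps $\epsilon_t$ uniformly bounded, so $\|C_t^{-1}\|$ never escapes its bound and the gradient contraction factor stays below $\rho$, while the decay of $\mu_t$ prevents the perturbation from contributing a non-vanishing term that would obstruct even linear convergence. The remaining work is the verification that the explicit constants $c_1,\dots,c_5,\beta,\gamma,\delta$ satisfy all the ``small-enough'' inequalities simultaneously; this is routine but delicate bookkeeping (the AM/GM step enters only in the later linear-to-superlinear upgrade, not here).
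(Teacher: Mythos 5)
Your proposal is correct and follows essentially the same route as the paper's proof: a simultaneous induction over the five claims, with the base case supplied by the initialization \eqref{eq:local_init} (and $C_0=G_0=I$), claim 5 propagated through Lemma \ref{lem:onestep-stage3} by summing the geometric series in $\|R_i\|$ and the summable $\mu_i c_5$ terms, claim 2 via Lemma \ref{lem:estimate_local} and the triangle inequality, claim 1 via the Taylor/Lipschitz decomposition together with the proximity of $C_t$ to $G_t$ (the paper, using Lemma \ref{lem:Cbound}, bounds $\|(G_t-C_t)r_t\|$ with $\mu_t F_0$ folded into $C_t^{-1}$, whereas you keep the perturbation term explicit --- an equivalent computation), and claims 3--4 from geometrically decaying step sums. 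The only differences are the cosmetic ordering of the inductive implications and your (correct) insistence that the gradient-contraction coefficient be at most $\rho$ rather than $1$; neither changes the argument.
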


\begin{proof} 
First, note that
  \[
  \beta \rho^{t-p} \leq \beta = p\delta(\gamma+1/2)\leq \frac{m}{M\sqrt{L}}
  \]
  by using $\delta \leq \frac{m}{Mp(\gamma+1/2)\sqrt{L}}$.
Now to prove the rest, inductively.\\

\textbf{Base case.} At $t \leq p$,
\begin{enumerate}
    \item Since $g(\xi^*) = 0$,  the initial assumption \eqref{eq:local_init} implies 
    \[
    \|g(\xi_t)\| = \|g(\xi_t)-g(\xi^*)\| \leq M\|\xi_t-\xi^*\| \leq \delta.
    \]
    \item This actually results from 3,5 (base case), following the same logic as in the inductive step.

    \item Since 
    \[
    \|\xi_0-\xi_t\|\leq \sum_{i=1}^p \|\xi_i - \xi_{i-1}\| \leq \sum_{i=1}^p \|\xi_i-\xi^*\| + \|\xi_{i-1}-\xi^*\| \leq \frac{2\delta}{M} \leq \frac{1}{2L}  .
    \]

    \item Since $\gamma \leq 1/4$,
    \begin{multline*}
    \|R_t\| \leq  \sum_{i=t-p+1}^t \sum_{j=t+1}^p \|\xi_j-\xi_{j-1}\|\\ \leq  \sum_{i=t-p+1}^t \sum_{j=t+1}^p \|\xi_j-\xi^*\| + \|\xi_{j-1}-\xi^*\| \leq \frac{2q\delta}{2} = \beta (\gamma+1/2) < \beta.
    \end{multline*}

    \item 
From Lemma \ref{lem:onestep-stage3},  
    \begin{multline*}
\|G_{t+1}^{-1} - C_{t+1}^{-1}\|_{t}-\bar\epsilon \leq (c_3 
    \gamma  +c_4)\sum_{i=0}^{t} \|R_i\| \leq (c_3\gamma+c_4) \beta \rho^{-p}\frac{1-\rho^t}{1-\rho}\\ \leq (c_3\gamma+c_4)\frac{p\delta(\gamma+1/2)}{1-\rho}\rho^{-p} \leq \gamma (1-\rho^t)
 \end{multline*}
    by using $\delta \leq  \frac{1-\rho}{p(\gamma+1/2)(c_3+c_4\gamma^{-1})}$.\\
\end{enumerate}

\textbf{Inductive proof.} Assume 1,2,3,4,5 are true at iteration $t$.
\begin{itemize} 

\item (3,5$_t$ $\to$ 1$_{t+1}$) By multisecant condition, $C_t R_t = Z_t$. So, it's also true that $C_t r_t = z_t$.

From 3, we have that $\|G_t^{-1}\| \leq \frac{1}{2}$, and $\|I-G_t\| = \|G_0-G_t\| \leq L\|\xi_0-\xi_t\| \leq 1/2$. Therefore, from 5,  and using Lemma \ref{lem:Cbound}, then we have that 
\[
\|I-C_t^{-1}\| \leq 1/4 + \gamma +\bar\epsilon =:c_7.
\]
Since $\gamma\leq 1/4$, $\bar \epsilon < 1/4$, then $c_7 < 1$. Therefore, $\|C_t\| \leq \frac{1}{1-c_7}$. 
So
\[
\|G_t-C_t\| \leq \underbrace{\|G_t\|}_{\leq M} \underbrace{\|G_t^{-1}-C_t^{-1}\|}_{ \gamma +\bar\epsilon}\underbrace{\|C_t\|}_{1/(1-c_7) } \leq M\gamma/(1-c_7)
\]
and thus 
\begin{eqnarray*}
\|g(\xi_{t+1})\| &\leq& \|g(\xi_{t+1})-g(\xi_t)-G_tr_t  + G_tr_t - C_tr_t\|\\
&\leq& \underbrace{\|g(\xi_{t+1})-g(\xi_t)-G_tr_t\|}_{(L/2) \|r_t\|^2}  + \underbrace{\|(G_t - C_t)r_t\|}_{\leq M (\gamma+\bar\epsilon) \|r_t\| /(1-c_7)}\\
&\leq& (\frac{L \|r_t\|_2}{2} + \frac{M(\gamma+\bar\epsilon)}{1-c_7}) \|r_t\|\\
&\leq& (\frac{L \beta \rho^{t-p}}{2} + \frac{M(\gamma+\bar\epsilon)}{1-c_7}) \beta \rho^{t-p}\\
&\leq& (\frac{L p(\gamma + \frac{1}{2}) \delta  \rho^{t-p}}{2} + \frac{M(\gamma+\bar\epsilon)}{1-c_7})  p(\gamma + \frac{1}{2}) \delta  \rho^{t-p}
\end{eqnarray*}.
Since $\gamma < \min\{ 1/(8M),1/4\}$ and $\delta \leq \frac{1}{2Lp(\gamma+1/2)}$, we derive
\[
\quad 
\frac{L p(\gamma+\frac{1}{2}) \delta }{2} + \frac{M\gamma}{1-c_7} p\bar C = \underbrace{\frac{Lp(\gamma+1/2)\delta}{2}}_{\leq 1/4}+\underbrace{\frac{M(\gamma+\bar\epsilon)}{1-\gamma-\bar \epsilon - 1/4}(\gamma+1/2)}_{\leq  4M(\gamma+\bar\epsilon)(\gamma+1/2) \leq 1/2} \leq 1.
\]

\item (3,4$_t$ $\to$ 5$_{t+1}$) From Lemma \ref{lem:estimate_local}, 3 implies $\|G_t^{-1}\|\leq 1/2$.
From Lemma \ref{lem:onestep-stage3},  
\begin{eqnarray*}
\|G_{t+1}^{-1} - C_{t+1}^{-1}\|_{t} &\leq& (c_3 
    \gamma  +c_4)\sum_{i=0}^{t} \|R_i\|\\
    &\leq& (c_3 
    \gamma  +c_4)\sum_{i=0}^{t} \beta \rho^{t-p} +c_5\sum_{i=0}^t \mu_i\\
    &=& (c_3 
    \gamma  +c_4)\sum_{i=0}^{t} p\delta(\gamma+\frac{1}{2}) \rho^{i-p}+c_5\sum_{i=0}^t \mu_i\\
    &\leq & (c_3 \gamma  +c_4)  p\delta(\gamma+\frac{1}{2})\rho^{-p}\frac{1-\rho^t}{1-\rho}+c_5\sum_{i=0}^t \mu_i\\
    &\leq & (c_3 \gamma  +c_4) \frac{p(1-\rho)\rho^p}{{p(\gamma+1/2)(c_3+\gamma^{-1}c_4)}}(\gamma+\frac{1}{2})\rho^{-p}\frac{1-\rho^t}{1-\rho}+c_5\sum_{i=0}^t \mu_i\\
    &=&  \gamma (1-\rho^t)+\underbrace{c_5\sum_{i=0}^t \mu_i}_{\epsilon_t}.
\end{eqnarray*}

\item (4$_{t}$ $\to$ 3$_{t+1}$) 
    Taking 
    \[
    \|\xi_0-\xi_{t+1}\|\leq \sum_{i=1}^{t+1}\|\xi_i-\xi_{i-1}\|\leq \sum_{i=1}^{t+1}\|R_i\|\leq \sum_{i=1}^{t+1}\beta \rho^{i-p} \leq \frac{\beta}{1-\rho}= \frac{p\delta(\gamma+1/2)}{1-\rho} \leq \frac{1}{2L}
    \]
    using $\delta \leq \frac{1-\rho}{2q(\gamma+1/2)L}$.

    \item (3$_{t+1}$,5$_{t+1}$  $\to$ 2$_{t+1}$)  From Lemma \ref{lem:estimate_local}, 3 implies $\|G_{t+1}^{-1}\|\leq \frac{1}{2}$. Then 
\[
\|C_{t+1}^{-1}\| \leq \|C_{t+1}^{-1}-G_{t+1}^{-1}\| + \|G_{t+1}^{-1}\| \leq  \gamma {+\bar \epsilon} + \frac{1}{2} =:\bar C.
\]

   \item (1$_{t+1}$,2$_{t+1}$ $\to$ 4$_{t+1}$)
\[
\|r_{t+1}\| =  \|-B_{t+1}^{-1} g(\xi_{t+1})\|\leq \|B_{t+1}^{-1}\| \|g(\xi_{t+1})\| \leq  \bar C  \delta \rho^{t+1-p}
\]
 and thus $\|R_{t+1}\|\leq \underbrace{p\bar C  \delta}_{=\beta} \rho^{t+1-p}$.

\end{itemize}

\end{proof}

 \begin{theorem}[Linear convergence] Under the same conditions as Lemma \ref{lem:linearconv}, convergence is  linear. In particular, 
 \[
 \frac{\|\xi_{k+1}-\xi^*\|}{\|\xi_{k}-\xi^*\|} \leq \frac{1}{2}.
 \]
\end{theorem}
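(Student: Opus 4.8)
The plan is to convert the control on the inverse-Hessian estimate established in Lemma \ref{lem:linearconv} into a one-step contraction of the error $\xi_t - \xi^*$, working in the scaled coordinates of the setup. There the iteration reads $\xi_{t+1} = \xi_t - C_t^{-1} g(\xi_t)$, where $C_t^{-1}$ denotes the (perturbed) scaled inverse-Hessian estimate. Since $g(\xi^*) = 0$, I would use the integral mean-value representation $g(\xi_t) = g(\xi_t) - g(\xi^*) = \bar G_t(\xi_t - \xi^*)$ with $\bar G_t := \int_0^1 g'(\xi^* + \tau(\xi_t - \xi^*))\,d\tau$. Substituting yields the exact error recursion $\xi_{t+1} - \xi^* = (I - C_t^{-1}\bar G_t)(\xi_t - \xi^*)$, so the theorem reduces to showing $\|I - C_t^{-1}\bar G_t\|_2 \le \tfrac12$.

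To bound the contraction matrix I would split $I - C_t^{-1}\bar G_t = (G_t^{-1} - C_t^{-1})\bar G_t + G_t^{-1}(G_t - \bar G_t)$ and estimate the two summands separately. The first term measures the quality of the Hessian approximation: by \eqref{eq:hess_cond} the averaged Hessian satisfies $\bar G_t \preceq MI$, while Lemma \ref{lem:linearconv}(5) controls $\|G_t^{-1} - C_t^{-1}\|_t \le \gamma(1-\rho^t) + \epsilon_t \le \gamma + \epsilon_\infty$ with $\epsilon_\infty := c_5\sum_i \mu_i$. The second term is the averaging error: using $\|G_t - \bar G_t\|_2 \le \tfrac{L}{2}\|\xi_t - \xi^*\|$, which follows directly from the Lipschitz assumption \eqref{eq:lipschitz_hessian} since $G_t - \bar G_t = \int_0^1 [g'(\xi_t) - g'(\xi^*+\tau(\xi_t-\xi^*))]\,d\tau$, together with $\|G_t^{-1}\|_2 \le 2$ from Lemma \ref{lem:estimate_local} (valid because $\|\xi_0 - \xi_t\| \le 1/(2L)$ by Lemma \ref{lem:linearconv}(3)) and the local bound $\|\xi_t - \xi^*\| \le \tfrac1m \|g(\xi_t)\| \le \tfrac{\delta}{m}\rho^{t-p}$ obtained from the strong convexity in \eqref{eq:hess_cond} and Lemma \ref{lem:linearconv}(1). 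Each summand is thereby forced small by the smallness of $\gamma$, $\epsilon_\infty$, and $\delta$ guaranteed by the hypotheses.

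The main obstacle, and the only genuinely delicate point, is the conversion between the weighted norm $\|X\|_t = \|G_t^{1/2} X G_t^{1/2}\|_F$ in which Lemma \ref{lem:linearconv}(5) is phrased and the operator $2$-norm needed to bound $(G_t^{-1} - C_t^{-1})\bar G_t$. Here I would diagonalize $G_t^{1/2}$ and use that for symmetric $E$ one has $\|G_t^{1/2} E G_t^{1/2}\|_F \ge \lambda_{\min}(G_t)\|E\|_F$, so that $\|G_t^{-1} - C_t^{-1}\|_2 \le 2\|G_t^{-1} - C_t^{-1}\|_t$, using $\lambda_{\min}(G_t) \ge \tfrac12$ (which holds since $G_0 = I$ and $\|I - G_t\| \le L\|\xi_0-\xi_t\| \le \tfrac12$). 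Assembling the pieces gives $\|I - C_t^{-1}\bar G_t\|_2 \le 2M(\gamma + \epsilon_\infty) + \tfrac{L\delta}{m}$, and the remaining work is the routine bookkeeping of checking, from the explicit choices $\gamma \le \tfrac{1}{8M}$, the summability $\sum_t \mu_t \le \bar\epsilon$ (which bounds $\epsilon_\infty$ through $c_5$), and the stated bound on $\delta$, that this right-hand side does not exceed $\tfrac12$. This establishes $\|\xi_{t+1} - \xi^*\| \le \tfrac12\|\xi_t - \xi^*\|$, and since $\xi = F_0^{1/2} x$ with $F_0$ a fixed nonsingular matrix, the same linear rate transfers to the unscaled iterates.
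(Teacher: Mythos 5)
Your proof is correct and reaches the paper's contraction by a genuinely (if mildly) different decomposition. The paper writes $\sigma_{t+1}=C_t^{-1}(C_t-G_t)\sigma_t + C_t^{-1}\bigl(G_t\sigma_t - g(\xi_t)+g(\xi^*)\bigr)$, bounds the second summand by the Taylor-remainder estimate $(L/2)\|\sigma_t\|^2$ (Lemma \ref{lem:smoothness_vectors}), and controls the first via $\|C_t-G_t\|\le\|G_t\|\,\|G_t^{-1}-C_t^{-1}\|\,\|C_t\|$, which forces it to bound $\|C_t\|$ through Lemma \ref{lem:Cbound}. You instead factor the error map as $I-C_t^{-1}\bar G_t=(G_t^{-1}-C_t^{-1})\bar G_t + G_t^{-1}(G_t-\bar G_t)$ with the averaged Hessian $\bar G_t$; this consumes Lemma \ref{lem:linearconv}(5) directly, never needs $\|C_t\|$ or Lemma \ref{lem:Cbound}, and replaces the Taylor remainder by the equivalent estimate $\|G_t-\bar G_t\|\le\tfrac{L}{2}\|\sigma_t\|$ — a real, if small, simplification. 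You are also more careful than the paper on a point it silently elides: item (5) is stated in the weighted norm $\|\cdot\|_t$, and your bridge $\|E\|_2\le\|E\|_F\le\lambda_{\min}(G_t)^{-1}\|E\|_t\le 2\|E\|_t$ (valid since $G_0=I$ and $\|I-G_t\|\le L\|\xi_0-\xi_t\|\le\tfrac12$; symmetry of $E$ is not even needed, as $\|G^{1/2}EG^{1/2}\|_F\ge\sigma_{\min}(G^{1/2})^2\|E\|_F$ for any $E$) is exactly the conversion the paper's proof uses without comment. One caveat about your closing ``routine bookkeeping'': the term $2M\epsilon_\infty$ carries the factor $c_5=(\tfrac{m\sqrt L}{2M}+1)M\|F_0\|_F$, which the hypothesis $\sum_t\mu_t\le\bar\epsilon$ does not by itself absorb, and under the fourth entry of the min defining $\delta$ your term $L\delta/m$ is only of order $M/(4m)$, so the asserted inequality $2M(\gamma+\epsilon_\infty)+L\delta/m\le\tfrac12$ does not literally follow from the displayed constants. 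This is not a defect relative to the paper: its own proof makes identical elisions (it substitutes $\bar\epsilon$ for $\epsilon_t$, ignoring $c_5$, and annotates $\|C_t^{-1}\|\le\gamma\le\tfrac14$ in tension with item (2) of Lemma \ref{lem:linearconv}); both arguments are correct at the level of ``the constants can be tuned so each summand is at most $\tfrac14$,'' and your structural reduction to $\|I-C_t^{-1}\bar G_t\|_2\le\tfrac12$ is sound.
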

\begin{proof}
\begin{multline*}
\sigma_{t+1} 
= \xi_t + r_t-\xi^* 
=\sigma_t -  C_t^{-1} (g(\xi_t)-g(\xi^*))
\\=C_t^{-1}(C_t -G_t)\sigma_t  + C_t^{-1}(G_t \sigma_t -   g(\xi_t)+ g(\xi^*))
\end{multline*}
so given that we previously had
\[
\|C_t-G_t\| \leq \frac{M(\gamma+\bar\epsilon)}{1-(1/4+\gamma+\bar\epsilon)} \overset{\gamma+\bar \epsilon < 1/2}{\leq}  4(\gamma+\bar\epsilon)\overset{\gamma+\bar \epsilon < 1/(4M)}{\leq} 1,
\]
then
\begin{multline*}
\|\sigma_{t+1}\| \leq \underbrace{\|C_t^{-1}\| }_{\leq \gamma\leq 1/4}  \underbrace{\|C_t -G_t\| }_{\leq 1}
\| \sigma_t\|  + \underbrace{\|C_t^{-1}\|}_{\leq 1/4}\underbrace{ \|G_t \sigma_t - g(\xi_t)+g(\xi^*)\|}_{\leq (L/2) \|\sigma\|^2}
\\
\leq\frac{1}{4}\|\sigma_t\| + \frac{1}{4}\|\sigma_t\|
\leq \frac{1}{2}\|\sigma_t\|.
\end{multline*}
\end{proof}

\subsubsection{Superlinear convergence}

Following Sections 6.4 and 6.5 of \citet{nocedal1999numerical}, we now show that linear convergence of the perturbed BFGS method implies q-superlinear convergence. Specifically, we extend the results   to the multisecant diagonally perturbed case.

 \paragraph{Scaling.} We now use a different scaling for the remainder of the proof, which is more traditional. We define  $F_* = \nabla^2 f(x^*)$,  and
 \[
 \tilde B_t = F_*^{-1/2} B_t F_*^{-1/2}, \qquad \tilde S_t = F_*^{1/2}S_t, \qquad \tilde Y_t = F_*^{-1/2}Y_t.
 \]
 So,  
\[
 B_{t+1}   =  B_t + \begin{bmatrix} Y_t & B_tS_t \end{bmatrix} \begin{bmatrix} \tfrac{1}{2}((Y_t^\top S_t)^{-1}+(S_t^\top Y_t)^{-1}) & 0 \\ 0 & -(S_t^\top B_tS_t)^{-1}  \end{bmatrix} \begin{bmatrix} Y_t^\top   \\ S_t^\top B_t\end{bmatrix} + \mu_t I
\]
which implies
 \[
 \tilde B_{t+1}   =   \tilde B_t + \begin{bmatrix}  \tilde Y_t &  \tilde B_t \tilde S_t \end{bmatrix} \begin{bmatrix} \tfrac{1}{2}(( \tilde Y_t^\top  \tilde S_t)^{-1}+( \tilde S_t^\top  \tilde Y_t)^{-1}) & 0 \\ 0 & -( \tilde S_t^\top  \tilde B_t \tilde S_t)^{-1}  \end{bmatrix} \begin{bmatrix}  \tilde Y_t^\top   \\  \tilde S_t^\top  \tilde B_t\end{bmatrix} + \mu_t F_*^{-1}.
\]
Moreover, using Taylor's theorem, for each single vector $s_t$ and $y_t$,
\[
y_t = \int_0^1 \nabla^2 f(x_{t-1}+\tau s_t)s_td\tau\quad  \Rightarrow \quad (y_t-F_0s_t) = (\underbrace{\int_0^1 \nabla^2 f(x_{t-1}+\tau s_t)d\tau}_{=:\bar F_t} -F_0)s_t.
\]
Therefore, given that we know $x_t\to x^*$, we can define $\epsilon_{t} :=  \|x_t-x^*\|\to 0$, and by $L$-smoothness of the Hessian, then $\|F_t-F_*\|\leq L \epsilon_{t}$. Thus,
\begin{eqnarray}
\| \tilde Y_t- \tilde S_t\|_2 &\leq& \|F_*^{-1/2}\|_2\|Y_t-F_*S_t\|_2 \nonumber\\
&\leq& \|F_*^{-1/2}\|_2 \|S_t\|_2\|F_t-F^*\|_2 \nonumber\\
&\leq& q^2L\|F_*^{-1/2}\|_2\|S_t\|_2\ \epsilon_{t}
\label{eq:more_than_linear_conv}
\end{eqnarray}
and
\begin{equation}
\| \tilde S_t^\top (\tilde Y_t- \tilde S_t)\|_2 \leq \|S_t^\top (Y_t-F_*S_t)\|_2 \leq\|S_t^\top (F_t-F_*)S_t\|_2  \leq q^2L\|S_t\|^2_2\ \epsilon_{t}.
\label{eq:more_than_linear_conv2}
\end{equation}

 \begin{lemma}[Matrix determinant property]
 \label{lem:matdetprop}
    Suppose that for some $X,Z$, $I+X^\top Z = 0$. Then 
    \[
        \det(I+XZ^\top +UV^\top ) =\det(-X^\top VU^\top Z).
    \]
\end{lemma}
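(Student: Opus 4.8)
The plan is to reduce the $n\times n$ determinant to a $2p\times 2p$ one via Sylvester's determinant identity, exploit the hypothesis to create a zero block, and then evaluate the resulting block determinant by an explicit factorization. First I would collect the two rank-$p$ terms into a single product, writing $XZ^\top + UV^\top = \begin{bmatrix} X & U\end{bmatrix}\begin{bmatrix} Z^\top \\ V^\top\end{bmatrix}$ for $X,Z,U,V\in\mathbb{R}^{n\times p}$. Applying the Weinstein--Aronszajn (Sylvester) identity $\det(I_n + PQ) = \det(I_{2p} + QP)$ with $P = \begin{bmatrix} X & U\end{bmatrix}$ and $Q = \begin{bmatrix} Z^\top \\ V^\top\end{bmatrix}$ turns the left-hand side into
\[
\det\begin{bmatrix} I + Z^\top X & Z^\top U \\ V^\top X & I + V^\top U\end{bmatrix}.
\]

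Next I would use the hypothesis. Since $I + X^\top Z = 0$, transposing gives $Z^\top X = -I$, so the top-left block $I + Z^\top X$ vanishes and the matrix becomes $\begin{bmatrix} 0 & Z^\top U \\ V^\top X & I + V^\top U\end{bmatrix}$, with all four blocks of size $p\times p$. The one subtle point is evaluating this block determinant cleanly: the top-left block is singular (indeed zero), so the usual Schur-complement formula does not apply directly. I would sidestep any invertibility assumption with the factorization
\[
\begin{bmatrix} 0 & B \\ C & D\end{bmatrix} = \begin{bmatrix} B & 0 \\ D & C\end{bmatrix}\begin{bmatrix} 0 & I \\ I & 0\end{bmatrix},
\]
valid for arbitrary square blocks. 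The first factor is block-triangular with determinant $\det(B)\det(C)$, and the anti-diagonal block-swap permutation has determinant $(-1)^p$ (it is $p$ disjoint transpositions). With $B = Z^\top U$, $C = V^\top X$, $D = I + V^\top U$, this yields $(-1)^p\det(Z^\top U)\det(V^\top X)$; crucially, the block $D = I + V^\top U$ drops out entirely.

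Finally I would match this to the claimed right-hand side. Using transpose-invariance and multiplicativity of the determinant on $p\times p$ matrices, $(-1)^p\det(Z^\top U)\det(V^\top X) = (-1)^p\det(U^\top Z)\det(X^\top V) = (-1)^p\det(X^\top V\,U^\top Z) = \det(-X^\top V U^\top Z)$, completing the argument. The main obstacle is really step three: correctly handling the determinant of a block matrix whose top-left block is non-invertible while bookkeeping the sign $(-1)^p$. The block-swap factorization resolves both at once and makes the cancellation of the $I + V^\top U$ block automatic, which is exactly what produces the deceptively simple final form; a Schur-complement route would instead force an invertibility assumption on $I + V^\top U$ (removable only by a density argument) and obscure this cancellation.
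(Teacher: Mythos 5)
Your proof is correct, and it follows the paper's skeleton for the first two steps: both reduce $\det(I+XZ^\top+UV^\top)$ to a $2p\times 2p$ determinant via the Sylvester/Weinstein--Aronszajn identity and then use $I+X^\top Z=0$ (equivalently $Z^\top X=-I$) to zero out the top-left block; your $2p\times 2p$ matrix is simply the transpose of the paper's. Where you genuinely diverge is the final evaluation of $\det\begin{bmatrix} 0 & B\\ C & D\end{bmatrix}$. The paper takes the Schur complement with respect to $D=I+U^\top V$, writing the determinant as $\det(I+U^\top V)\det\bigl(-X^\top V(I+U^\top V)^{-1}U^\top Z\bigr)$ and then cancelling, which implicitly assumes $I+U^\top V$ is invertible --- exactly the removable-by-density gap you anticipated when you dismissed the Schur route. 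Your block-swap factorization $\begin{bmatrix} 0 & B\\ C & D\end{bmatrix}=\begin{bmatrix} B & 0\\ D & C\end{bmatrix}\begin{bmatrix} 0 & I\\ I & 0\end{bmatrix}$, with the $(-1)^p$ from the $p$ disjoint transpositions, evaluates the determinant unconditionally and makes the cancellation of $D$ structural rather than algebraic. So your argument is marginally more general than the paper's as written (no invertibility hypothesis, no continuity appeal needed), at the cost of one extra sign-bookkeeping step that you carry out correctly.
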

\begin{proof}
    This simply follows from
\begin{eqnarray*}
    \det(I+XZ^\top +UV^\top ) &=& \det\left(I+\begin{bmatrix} X^\top \\U^\top  \end{bmatrix} \begin{bmatrix} Z & V \end{bmatrix}\right)\\
    &=& \det\left(\begin{bmatrix} 0 & X^\top V \\ U^\top Z & I+U^\top V \end{bmatrix} \right)\\
    &=& \det(I+U^\top V)\det(-X^\top V(I+U^\top V)^{-1}U^\top Z) \\
    &=& \det(-X^\top VU^\top Z).
\end{eqnarray*}
\end{proof}

\begin{lemma}[Matrix determinant perturbation]
 \label{lem:matdetperturb}
Assume $B\succ 0$. 
If $0<\epsilon \leq 1/\tr(B^{-1})$, then 
    \[
\det(B + \epsilon I) - \det(B) \leq 
2 \epsilon \cdot \det(B) \cdot \operatorname{tr}(B^{-1}).
\]
\end{lemma}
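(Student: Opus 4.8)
The plan is to reduce this matrix inequality to a one-variable scalar inequality by diagonalizing. Since $B\succ 0$ is symmetric positive definite, let $\lambda_1,\dots,\lambda_n>0$ be its eigenvalues; then $B+\epsilon I$ has the same eigenvectors with eigenvalues $\lambda_i+\epsilon$, so both determinants factor over the spectrum. Dividing the claimed inequality through by $\det(B)=\prod_i\lambda_i>0$, it is equivalent to
\[
\prod_{i=1}^n\Bigl(1+\frac{\epsilon}{\lambda_i}\Bigr)-1\;\le\;2\epsilon\sum_{i=1}^n\frac{1}{\lambda_i}.
\]
I would set $a_i:=\epsilon/\lambda_i\ge 0$ and $s:=\sum_i a_i=\epsilon\,\operatorname{tr}(B^{-1})$, so that the whole statement collapses to showing $\prod_i(1+a_i)-1\le 2s$ for nonnegative $a_i$ with $s=\sum_i a_i$. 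Crucially, the hypothesis $0<\epsilon\le 1/\operatorname{tr}(B^{-1})$ is \emph{exactly} the normalization $s\le 1$.

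For the product I would apply the elementary bound $1+x\le e^{x}$ termwise, giving $\prod_i(1+a_i)\le\prod_i e^{a_i}=e^{s}$, after which it suffices to establish the scalar inequality $e^{s}-1\le 2s$ on $[0,1]$. I would verify this by setting $h(s)=2s-e^{s}+1$, noting $h(0)=0$, and examining $h'(s)=2-e^{s}$: since $h'$ is positive on $[0,\ln 2)$ and negative on $(\ln 2,1]$, the function $h$ is unimodal, so its minimum over $[0,1]$ occurs at an endpoint; as $h(0)=0$ and $h(1)=3-e>0$, we conclude $h\ge 0$ throughout, i.e. $e^{s}-1\le 2s$. Chaining the estimates, $\prod_i(1+a_i)-1\le e^{s}-1\le 2s=2\epsilon\,\operatorname{tr}(B^{-1})$, and multiplying back through by $\det(B)$ recovers the lemma.

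The computation is entirely elementary, and the one place that genuinely requires care is the scalar inequality $e^{s}-1\le 2s$: it holds only on a bounded interval (the exponential eventually overtakes the line $1+2s$), so the restriction $s\le 1$ inherited from the hypothesis is indispensable, and this is where I expect the only real subtlety to lie. It is worth noting that cruder finite bounds are not strong enough over the full range: the clean telescoping estimate $\prod_i(1+a_i)\le (1-s)^{-1}$ (provable by induction on $n$) yields only $\prod_i(1+a_i)-1\le s/(1-s)$, which gives the factor $2$ merely for $s\le \tfrac12$. The exponential bound is therefore the right tool precisely because it keeps $e^{s}-1$ bounded by $2s$ all the way out to $s=1$.
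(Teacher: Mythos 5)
Your proof is correct and takes essentially the same route as the paper's: diagonalize $B$, reduce to $\prod_i\bigl(1+\epsilon/\lambda_i\bigr)-1$, bound the product by $e^{\epsilon\operatorname{tr}(B^{-1})}$ via $1+x\le e^x$, and close with the scalar inequality $e^s-1\le 2s$ on $[0,1]$, which is exactly where the hypothesis $\epsilon\le 1/\operatorname{tr}(B^{-1})$ enters. If anything, your direct verification of $h(s)=2s-e^s+1\ge 0$ on $[0,1]$ is cleaner than the paper's two-step chain $e^x-1\le xe^x$ followed by $e^x\le 1+2x$, but the substance is identical.
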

\begin{proof}

Let \(\lambda_1, \dots, \lambda_n > 0\) be the eigenvalues of \(B\). Then we know
\[
\det(B) = \prod_{i=1}^n \lambda_i,
\quad
\det(B + \epsilon I) = \prod_{i=1}^n (\lambda_i + \epsilon),
\]
and so
\[
\det(B + \epsilon I) - \det(B) = \prod_{i=1}^n (\lambda_i + \epsilon) - \prod_{i=1}^n \lambda_i
= \det(B) \left( \prod_{i=1}^n \left(1 + \frac{\epsilon}{\lambda_i} \right) - 1 \right).
\]
For all $x > 0$, $\log(1 + x) \leq x$. Thus,
\[
\log\left( \prod_{i=1}^n \left(1 + \frac{\epsilon}{\lambda_i} \right) \right)
= \sum_{i=1}^n \log\left(1 + \frac{\epsilon}{\lambda_i} \right)
\leq \sum_{i=1}^n \frac{\epsilon}{\lambda_i}
= \epsilon \cdot \operatorname{tr}(B^{-1}).
\]
Therefore,
\[
\det(B + \epsilon I) - \det(B) \leq \det(B) \cdot \left( \exp(\epsilon \cdot \operatorname{tr}(B^{-1})) - 1 \right)\leq \epsilon \cdot \det(B) \cdot \operatorname{tr}(B^{-1}) \cdot e^{\epsilon \cdot \operatorname{tr}(B^{-1})}
\]
since $\exp(x) - 1 \leq x e^x \; \text{for all }\; x \geq 0$.\\
For any \(\epsilon \in (0, 1 / \operatorname{tr}(B^{-1}))\), since \(e^{x} \leq 1 + 2x\) for small \(x\), we can conclude:
\[
\det(B + \epsilon I) - \det(B)
\leq 2 \epsilon \cdot \det(B) \cdot \operatorname{tr}(B^{-1}).
\]

\end{proof}

 \begin{lemma}[Proxy functions converge]
 \label{lem:proxy_converge}
Assume that $\tilde S_t^\top \tilde S_t$ is invertible and $\tilde B_t$ is positive definite. Additionally, assume that the method is convergent, e.g. $x_t\to x^*$, and the perturbation $\mu_t \leq c_5 \|x_t-x^*\|_2\to 0$. Then for 
\begin{eqnarray*}
    \psi_t&=& \tr(\tilde B_t)-\ln\det(\tilde B_t)\\
    \chi_t &=& q - \tr((\tilde S_t^\top \tilde B_t\tilde S_t)^{-1}(\tilde S_t^\top \tilde B_t\tilde B_t\tilde S_t)) + \ln\det((\tilde S_t^\top \tilde B_t\tilde S_t)^{-1}(\tilde S_t^\top \tilde B_t\tilde B_t\tilde S_t)),
\end{eqnarray*} then 
$\psi_t = \chi_t+\phi_t + \epsilon_t$ where $\epsilon_t\to 0$, which in turn implies $\chi_t\to 0$ and $\phi_t\to 0$.
     
 \end{lemma}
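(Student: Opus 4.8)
The plan is to adapt the classical Byrd--Nocedal trace--log-determinant potential argument (Sections 6.4--6.5 of \citet{nocedal1999numerical}) to the multisecant, symmetrized, and diagonally perturbed update. Note first that the potential $\psi(\tilde B) = \tr(\tilde B) - \ln\det(\tilde B) = \sum_i(\lambda_i - \ln\lambda_i)$ is finite (since $\tilde B_t \succ 0$ by hypothesis) and bounded below by $n$, because $\lambda - \ln\lambda \geq 1$ for every $\lambda > 0$. The heart of the proof is a one-step identity of the form $\psi_{t+1} - \psi_t = \chi_t + \phi_t + \epsilon_t$ in which $\chi_t \leq 0$, $\phi_t \leq 0$, and $\sum_t |\epsilon_t| < \infty$; telescoping this and using the lower bound on $\psi$ forces $\sum_t(-\chi_t - \phi_t) < \infty$, whence $\chi_t \to 0$ and $\phi_t \to 0$. (The displayed relation $\psi_t = \chi_t + \phi_t + \epsilon_t$ should be read as this per-step increment.)

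First I would compute the trace and determinant of the scaled update $\tilde B_{t+1} = \tilde B_t + \tilde Y_t(\tilde Y_t^\top\tilde S_t)^{-1}\tilde Y_t^\top - \tilde B_t\tilde S_t(\tilde S_t^\top\tilde B_t\tilde S_t)^{-1}\tilde S_t^\top\tilde B_t + \mu_t F_*^{-1}$. The trace is immediate from linearity and cyclicity, producing a term $\tr(\tilde Y_t(\tilde Y_t^\top\tilde S_t)^{-1}\tilde Y_t^\top)$, a term $-\tr(M_t)$ with $M_t := (\tilde S_t^\top\tilde B_t\tilde S_t)^{-1}(\tilde S_t^\top\tilde B_t^2\tilde S_t)$, and the perturbation contribution $\mu_t\tr(F_*^{-1})$. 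The determinant is where the work lies: ignoring the perturbation, the rank-$2q$ update factors, via the matrix-determinant identity of Lemma \ref{lem:matdetprop} (applied with $XZ^\top = -\tilde B_t^{1/2}\tilde S_t(\tilde S_t^\top\tilde B_t\tilde S_t)^{-1}\tilde S_t^\top\tilde B_t^{1/2}$, so that $I + X^\top Z = 0$), into $\det(\tilde B_{t+1}) = \det(\tilde B_t)\det(\tilde Y_t^\top\tilde S_t)/\det(\tilde S_t^\top\tilde B_t\tilde S_t)$, the direct multisecant generalization of the single-secant factor $y^\top s/(s^\top Bs)$. The effect of the additive $\mu_t F_*^{-1}$ term on $\ln\det$ is then controlled by Lemma \ref{lem:matdetperturb}, which bounds it by a multiple of $\mu_t$.

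Combining the two, I would regroup the increment. The term
\[
\chi_t = q - \tr(M_t) + \ln\det(M_t) = \sum_i\big(1 - \lambda_i(M_t) + \ln\lambda_i(M_t)\big)
\]
is $\leq 0$ since $1 - \lambda + \ln\lambda \leq 0$. The angle term $\phi_t = 2\ln\det(\tilde S_t^\top\tilde B_t\tilde S_t) - \ln\det(\tilde S_t^\top\tilde B_t^2\tilde S_t) - \ln\det(\tilde S_t^\top\tilde S_t)$ is the multisecant analogue of $\ln\cos^2\theta_t$; writing $U = \tilde B_t^{1/2}\tilde S_t$ it equals $2\ln\det(U^\top U) - \ln\det(U^\top\tilde B_t U) - \ln\det(U^\top\tilde B_t^{-1}U)$, which is $\leq 0$ by the determinant Cauchy--Schwarz inequality $\det(U^\top U)^2 \leq \det(U^\top\tilde B_t U)\det(U^\top\tilde B_t^{-1}U)$. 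The remaining term
\[
\epsilon_t = \tr(\tilde Y_t(\tilde Y_t^\top\tilde S_t)^{-1}\tilde Y_t^\top) - \ln\det(\tilde Y_t^\top\tilde S_t) + \ln\det(\tilde S_t^\top\tilde S_t) - q + O(\mu_t)
\]
vanishes identically when $\tilde Y_t = \tilde S_t$ (it collapses to $\tr(\Pi_{\mathrm{range}(\tilde S_t)}) - q = 0$), so its magnitude is governed by $\|\tilde Y_t - \tilde S_t\|$ and $\|\tilde S_t^\top(\tilde Y_t - \tilde S_t)\|$. By the convergence estimates \eqref{eq:more_than_linear_conv} and \eqref{eq:more_than_linear_conv2} these are $O(\|x_t-x^*\|)$, and since the iterates converge linearly $\sum_t\|x_t-x^*\| < \infty$; together with the summability of $\mu_t$ this gives $\sum_t|\epsilon_t| < \infty$. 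The symmetrization only perturbs $\tilde Y_t^\top\tilde S_t$ by $O(\|x_t-x^*\|)$ (since $\tilde Y_t^\top\tilde S_t \to \tilde S_t^\top\tilde S_t$ is symmetric) and is likewise absorbed into $\epsilon_t$.

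\textbf{Main obstacle.} The delicate step is the determinant bookkeeping: verifying the factored determinant formula for the genuinely rank-$2q$ multisecant update through Lemma \ref{lem:matdetprop}, and then showing the additive $\mu_t F_*^{-1}$ contributes only a summable amount to $\ln\det$ via Lemma \ref{lem:matdetperturb}. This is precisely where the forced decay and summability of $\mu_t$ are indispensable --- a non-summable perturbation would leave an irreducible positive drift in $\psi_t$ and break the telescoping, which is exactly why the two-stage $\mu$-correction of the main text is needed. A secondary care point is confirming $\phi_t \leq 0$ through the correct determinant Cauchy--Schwarz inequality, and checking that the $q\times q$ inverses in $\epsilon_t$ stay well-conditioned along the trajectory, which follows from the invertibility of $\tilde S_t^\top\tilde S_t$ and the near-identity behaviour of $\tilde B_t$ established in the linear-convergence phase.
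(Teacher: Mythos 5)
Your proposal matches the paper's proof essentially step for step: the same potential $\psi$, the same trace and determinant bookkeeping via Lemmas \ref{lem:matdetprop} and \ref{lem:matdetperturb}, the same decomposition into $\chi_t\leq 0$ (from $1-\lambda+\ln\lambda\leq 0$), $\phi_t\leq 0$ (your determinant Cauchy--Schwarz with $U=\tilde B_t^{1/2}\tilde S_t$ is equivalent to the paper's projection/L\"owner argument), and an error term controlled by \eqref{eq:more_than_linear_conv}--\eqref{eq:more_than_linear_conv2} together with the summability of $\mu_t$. If anything, your explicit telescoping against the lower bound $\psi\geq n$ makes the final deduction that $\chi_t\to 0$ and $\phi_t\to 0$ more airtight than the paper's closing sentence, which leaves that summability step implicit.
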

 \begin{proof}
 First, 
 \[
 \tr(\tilde B_{t+1}) = \tr(\tilde B_t) - \tr( \tilde B_t \tilde S_t(\tilde S_t^\top B_t\tilde S_t)^{-1}\tilde S_t^\top \tilde B_t) + \tr(\tilde Y_t(\tilde S_t^\top \tilde Y_t)^{-1}\tilde Y_t^\top ) + \mu_t\tr(F_0)
 \]
 and invoking Lemma \ref{lem:matdetprop} with
\[
X = -\tilde S_t, \quad Z = \tilde B_t\tilde S_t(\tilde S_t^\top \tilde B_t\tilde S_t)^{-1}, \quad U = \tilde B_t^{-1}\tilde Y_t, \quad V = \tilde Y_t(\tilde Y_t^\top \tilde S_t)^{-1}
\]
\begin{eqnarray*}
\det(\tilde B_{t+1}-\mu_tF_0) &=& \det(\tilde B_t)\det(I-\tilde S_t(\tilde S_t^\top \tilde B_t\tilde S_t)^{-1}\tilde S_t^\top \tilde B_t + \tilde B_t^{-1}\tilde Y_t(\tilde Y_t^\top \tilde S_t)^{-1}\tilde Y_t)\\
&=& \det(\tilde B_t)\det\left(I+\begin{bmatrix}\tilde B_t^{-1} \tilde Y_t &  \tilde S_t \end{bmatrix} \begin{bmatrix} (\tilde Y_t^\top \tilde S_t)^{-1} & 0 \\ 0 & -(\tilde S_t^\top \tilde B_t\tilde S_t)^{-1}  \end{bmatrix} \begin{bmatrix} \tilde Y_t^\top   \\ \tilde S_t^\top \tilde B_t\end{bmatrix} \right)\\
&=& \det(\tilde B_t)\det(\tilde S_t^\top \tilde Y_t (\tilde Y_t^\top \tilde S_t)^{-1})\det(\tilde Y_t^\top \tilde S_t(\tilde S_t^\top \tilde B_t\tilde S_t)^{-1})\\
&=& \det(\tilde B_t)\det(\tilde S_t^\top \tilde Y_t )\det((\tilde S_t^\top \tilde B_t\tilde S_t)^{-1}).
\end{eqnarray*} 
Invoking Lemma \ref{lem:matdetperturb}, this implies that for $t$ large enough, 
\[
\det(\tilde B_{t+1}) \leq \det(\tilde B_t)\det(\tilde S_t^\top \tilde Y_t)\det((\tilde S_t^\top \tilde B_t\tilde S_t)^{-1}) + \epsilon_{1,t}
\]
where $\epsilon_{1,t} = O(\mu_t)\to 0$. 
 So  we have 
\begin{eqnarray*}
\psi(\tilde B_{t+1}) - \psi(\tilde \tilde B_t) &=&  -\tr((\tilde S_t^\top \tilde B_t\tilde S_t)^{-1}(\tilde S_t^\top \tilde B_t\tilde B_t\tilde S_t)) + \tr((\tilde S_t^\top \tilde Y_t)^{-1}(\tilde Y_t^\top \tilde Y_t)) \\&&-\ln(\det(\tilde S_t^\top \tilde Y_t )) - \ln(\det((\tilde S_t^\top \tilde B_t\tilde S_t)^{-1})  ).
\end{eqnarray*}
Since $1-t+\ln(t)$ is nonpositive for all $t>0$, then $1-\lambda  + \ln(\lambda)\leq 0$ for each positive eigenvalue $\lambda$ of 
\[
M_t = (\tilde S_t^\top \tilde B_t\tilde S_t)^{-1/2}(\tilde S_t^\top \tilde B_t\tilde B_t\tilde S_t)(\tilde S_t^\top \tilde B_t\tilde S_t)^{-1/2}.
\]
So, summing them all up, 
\[
\chi_t := q -\tr((\tilde S_t^\top \tilde B_t\tilde S_t)^{-1}(\tilde S_t^\top \tilde B_t\tilde B_t\tilde S_t)) + \ln(\det((\tilde S_t^\top \tilde B_t\tilde S_t)^{-1}(\tilde S_t^\top \tilde B_t\tilde B_t\tilde S_t)) \leq 0.
\]
Also, since 
\[
\det((\tilde S_t^\top \tilde B_t\tilde S_t)(\tilde S_t^\top \tilde S_t)^{-1}(\tilde S_t^\top \tilde B_t\tilde S_t)) = \det(\tilde S_t^\top \tilde B_t\tilde S_t^\dagger \tilde S_t^\top \tilde B_t\tilde S_t) \leq \det(\tilde S_t^\top \tilde B_t\tilde B_t\tilde S_t)
\]
by monotonicity of gradient over Lowner partial ordering,
then 
\[
\frac{\det(\tilde S_t^\top \tilde B_t\tilde S_t)^2}{\det(\tilde S_t^\top \tilde B_t\tilde B_t\tilde S_t)\det(\tilde S_t^\top \tilde S_t)}\leq 1
\]
and
\[
\phi_t := \ln(\det(\tilde S_t^\top \tilde B_t\tilde S_t))^2 - \ln(\det(\tilde S_t^\top \tilde B_t\tilde B_t\tilde S_t)) - \ln\det(\tilde S_t^\top \tilde S_t) \leq 0.
\]
So
\begin{eqnarray*}
\psi( \tilde B_{t+1}) - \psi( \tilde B_t) &=&  \chi_t - q - \ln(\det((\tilde S_t^\top \tilde B_t\tilde S_t)^{-1}(\tilde S_t^\top \tilde B_t\tilde B_t\tilde S_t))+ \tr((\tilde S_t^\top \tilde Y_t)^{-1}(\tilde Y_t^\top \tilde Y_t)) \\&&-\ln(\det(\tilde S_t^\top \tilde Y_t )) - \ln(\det((\tilde S_t^\top \tilde B_t\tilde S_t)^{-1})  )\\
&=& \chi_t - q + \tr((\tilde S_t^\top \tilde Y_t)^{-1}(\tilde Y_t^\top \tilde Y_t)) -\ln(\det(\tilde S_t^\top \tilde Y_t )) +\phi_t  + \ln\det(\tilde S_t^\top \tilde S_t).
\end{eqnarray*}
Defining 
\begin{eqnarray*}
\epsilon_{2,t} &=& - q + \tr((\tilde S_t^\top \tilde Y_t)^{-1}(\tilde Y_t^\top \tilde Y_t)) -\ln(\det(\tilde S_t^\top \tilde Y_t )) + \ln\det (\tilde S_t^\top \tilde S_t) + \epsilon_{1,t},
\end{eqnarray*}
then $\psi( \tilde B_{t+1}) - \psi( \tilde B_t) = \chi_t + \phi_t + \epsilon_{1,t}$.

Let us now bound the $\epsilon_{2,t}$ term.
From Property \eqref{eq:more_than_linear_conv2}, we have that   $\|\tilde S_t(\tilde Y_{t}-\tilde S_{t})\|_2 \leq c_0\|\tilde S_t\|^2_2\epsilon_{t}$ for $c_0 = q^2L\|F_*^{-1/2}\|_2 $. 
So invoking Property \eqref{eq:more_than_linear_conv2},
\[
\|(S^\top S)^{-1/2}(S^\top Y)(S^\top S)^{-1/2} - I\| = \|(S^\top S)^{-1/2}(S^\top (Y-S))(S^\top S)^{-1/2}\| \leq c_0\|\tilde S_t\|_2\epsilon_t \to 0
\]
which implies that the eigenvalues of $\frac{1}{2}(S^\top S)^{-1/2}(S^\top Y+Y^\top S)(S^\top S)^{-1/2} $ converge to 1.
So, $\det(\tilde S_t^\top \tilde Y_t) (\tilde S_t^\top \tilde S_t)^{-1}\to 1$ and 
\[
\ln\det(\tilde S_t^\top \tilde Y_t) - \ln\det(\tilde S_t^\top \tilde S_t)\to 0.
\]
Along similar lines,
\begin{eqnarray*}
\tr((\tilde S_t^\top \tilde Y_t)^{-1}\tilde Y_t^\top \tilde Y_t)  &=& \tr((\tilde S_t^\top \tilde Y_t)^{-1}(\tilde S_t^\top \tilde S_t)^{1/2} (\tilde S_t^\top \tilde S_t)^{-1/2}(\tilde Y_t^\top \tilde Y_t) (\tilde S_t^\top \tilde S_t)^{-1/2} (\tilde S_t^\top \tilde S_t)^{1/2})\\
&\leq & \underbrace{\|(\tilde S_t^\top \tilde Y_t)^{-1}(\tilde S_t^\top \tilde S_t)\|_2 }_{\to 1} \cdot \tr( (\tilde Y_t^\top \tilde Y_t) (\tilde S_t^\top \tilde S_t)^{-1})
\end{eqnarray*}
Finally, since 
\[
Y^\top Y(S^\top S)^{-1} = (Y-S)^\top (Y-S)(S^\top S)^{-1} + S^\top (Y-S)(S^\top S)^{-1} + Y^\top S(S^\top S)^{-1}
\]
then for $c_1 = q^4L^2\|F_*^{-1/2}\|_2^2$, $c_2 = q^2L$.
\begin{multline*}
\tr(Y^\top Y(S^\top S)^{-1})= q\underbrace{\|(Y-S)^\top (Y-S)(S^\top S)^{-1}\|}_{\text{Prop \ref{eq:more_than_linear_conv}}: \leq c_1\epsilon^2_t} +\\ q\underbrace{\|S^\top (Y-S)(S^\top S)^{-1}\|}_{\text{Prop \ref{eq:more_than_linear_conv2}}: \leq c_2\epsilon_t} + \underbrace{\tr(Y^\top S(S^\top S)^{-1})}_{\to q}.
\end{multline*}
All of this implies that there exists a constant $c_3$ such that $\epsilon_{2,t}\leq c_3\epsilon_t$. 
Therefore, given that $\psi(\tilde B_{t+1})-\psi(\tilde B_t) \to 0$ and both $\chi_t\geq 0$, $\phi_t\geq 0$, and $\epsilon_t\to 0$, it must be that $\chi_t\to 0$ and $\phi_t\to 0$. 
 \end{proof}

\begin{theorem}[$q$-superlinear conv.]
\label{th:app:superlinear}
Given assumptions \eqref{eq:hess_cond} and \eqref{eq:lipschitz_hessian}, and those 
of Lemma \ref{lem:linearconv}, then 
\[
\frac{\|\tilde B_t \tilde S_t - \tilde S_t\|_F}{\|\tilde S_t\|_F} \to 0
\]
which implies $q$-superlinear convergence.
\end{theorem}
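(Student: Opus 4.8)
The plan is to follow the linear-to-superlinear bootstrapping of \citet{dennis1977quasi} and \citet{nocedal1999numerical}, adapted to the multisecant, symmetrized, diagonally perturbed update through the proxy potential $\psi_t = \operatorname{tr}(\tilde B_t) - \ln\det(\tilde B_t)$. The heavy telescoping work is already packaged in Lemma \ref{lem:proxy_converge}, so the first step is simply to verify its hypotheses and invoke it. Linear convergence (Lemma \ref{lem:linearconv} and the theorem following it) gives $x_t \to x^*$ with geometrically decaying $\|x_t-x^*\|$; hence $\mu_t \le c_5\|x_t-x^*\|$ is summable and the error terms $\epsilon_t \to 0$. Since $\tilde B_t$ is positive definite (by the PSD perturbation) so that $\psi$ is bounded below, and the increments of $\psi$ equal $\chi_t + \phi_t + \epsilon_t$ with $\chi_t,\phi_t \le 0$, Lemma \ref{lem:proxy_converge} yields $\chi_t \to 0$ and $\phi_t \to 0$. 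Everything that remains is a finite-dimensional ($q\times q$) argument turning these two limits into the Dennis--Moré condition.

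Abbreviate the symmetric $q\times q$ matrices $P_t = \tilde S_t^\top \tilde S_t$, $Q_t = \tilde S_t^\top \tilde B_t \tilde S_t$, and $R_t = \tilde S_t^\top \tilde B_t^2 \tilde S_t$. First I would read $\chi_t$ as $\sum_i (1-\lambda_i+\ln\lambda_i)$ over the eigenvalues $\lambda_i$ of $M_t := Q_t^{-1/2}R_t Q_t^{-1/2}$; each summand is nonpositive and there are only $q$ of them, so $\chi_t \to 0$ forces every $\lambda_i \to 1$, i.e. $M_t \to I$. Next, the matrix Cauchy--Schwarz inequality $Q_tP_t^{-1}Q_t \preceq R_t$ (the inequality underlying $\phi_t \le 0$, with $R_t - Q_tP_t^{-1}Q_t = (\tilde B_t\tilde S_t)^\top(I-\Pi_{\tilde S_t})(\tilde B_t\tilde S_t) \succeq 0$) gives $B_t := Q_t^{1/2}P_t^{-1}Q_t^{1/2} \preceq M_t$, while $\det B_t/\det M_t = e^{\phi_t} \to 1$. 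A PSD matrix dominated by $M_t \to I$ with unit limiting determinant must itself tend to $I$, so $B_t \to I$. Writing $G_t = Q_t^{1/2}P_t^{-1/2}$ one has $G_t^\top G_t = N_t := P_t^{-1/2}Q_t P_t^{-1/2}$ and $G_tG_t^\top = B_t \to I$, whence $N_t \to I$; moreover $P_t^{-1/2}R_t P_t^{-1/2} = G_t^\top M_t G_t = N_t + G_t^\top(M_t-I)G_t \to I$. Thus both \emph{relative} ratios $N_t \to I$ and $\hat R_t := P_t^{-1/2}R_t P_t^{-1/2} \to I$.

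To finish, expand the target in traces,
\[
\|\tilde B_t\tilde S_t - \tilde S_t\|_F^2 = \operatorname{tr}(R_t) - 2\operatorname{tr}(Q_t) + \operatorname{tr}(P_t), \qquad \|\tilde S_t\|_F^2 = \operatorname{tr}(P_t).
\]
Since $\operatorname{tr}(Q_t) = \operatorname{tr}(N_t P_t)$ and $\operatorname{tr}(R_t) = \operatorname{tr}(\hat R_t P_t)$, the bound $|\operatorname{tr}(E P_t)| \le \|E\|_2\operatorname{tr}(P_t)$ for $P_t \succeq 0$, applied to $E = N_t - I$ and $E = \hat R_t - I$, gives $\operatorname{tr}(Q_t)/\operatorname{tr}(P_t) \to 1$ and $\operatorname{tr}(R_t)/\operatorname{tr}(P_t) \to 1$, so the ratio tends to $1-2+1 = 0$: this is the claimed multisecant Dennis--Moré limit. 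The per-step condition $\|(\tilde B_t-I)\tilde s_t\|/\|\tilde s_t\| \to 0$ then follows because $\tilde s_t$ is a column of $\tilde S_t$ and, by the linear rate $\|x_{t+1}-x^*\| \le \tfrac12\|x_t-x^*\|$, the $q$ stored steps have norms comparable up to a fixed factor $C(q)$, so $\|\tilde S_t\|_F/\|\tilde s_t\|$ stays bounded; the classical Dennis--Moré characterization delivers $q$-superlinear convergence.

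The main obstacle is the second paragraph: correctly coupling $\chi_t \to 0$ and $\phi_t \to 0$ so that both $N_t$ and $\hat R_t$ converge to $I$ in the \emph{relative}, $P_t$-weighted sense, and then transferring this to the \emph{absolute} traces via $\operatorname{tr}(EP_t) \le \|E\|_2\operatorname{tr}(P_t)$. This is precisely where the scalar cosine/quotient identities of the single-secant proof become a pair of $q\times q$ eigenvalue and determinant statements, and where the asymptotic tightness of the matrix Cauchy--Schwarz (an AM/GM-type fact) must be exploited. Some additional care is needed at the very end to pass from the Frobenius statement about the full block $\tilde S_t$ to the single step $\tilde s_t$ required by Dennis--Moré, and to absorb the harmless index shift between $\tilde B_t$ and the matrix actually generating the step.
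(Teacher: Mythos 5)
Your derivation of the main limit is correct, and its second half takes a genuinely different route from the paper's. Both arguments share the same skeleton: verify the hypotheses of Lemma \ref{lem:proxy_converge} via linear convergence and summability of $\mu_t$, conclude $\chi_t \to 0$ and $\phi_t \to 0$, and convert these into the block condition by $q\times q$ linear algebra. The paper finishes by bounding $\|\tilde B_t\tilde S_t-\tilde S_t\|_F^2/\|\tilde S_t\|_F^2$ by $\operatorname{tr}\bigl((\tilde B_t\tilde S_t-\tilde S_t)^\top(\tilde B_t\tilde S_t-\tilde S_t)(\tilde S_t^\top\tilde S_t)^{-1}\bigr)$, passing to the limit inside the trace using $\chi_t\to 0$, and then combining $\det\bigl(\tilde S_t^\top\tilde B_t\tilde S_t(\tilde S_t^\top\tilde S_t)^{-1}\bigr)\to 1$ with the AM/GM bound $\operatorname{tr}(U_t^\top\tilde B_tU_t)\ge q\det(U_t^\top\tilde B_tU_t)^{1/q}$. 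You instead exploit the asymptotic tightness of the matrix Cauchy--Schwarz inequality $Q_tP_t^{-1}Q_t\preceq R_t$: since $\det(Q_t^{1/2}P_t^{-1}Q_t^{1/2})/\det(M_t)=e^{\phi_t}\to 1$ while $Q_t^{1/2}P_t^{-1}Q_t^{1/2}\preceq M_t\to I$, all eigenvalues are squeezed to $1$, yielding operator-norm convergence $P_t^{-1/2}Q_tP_t^{-1/2}\to I$ and $P_t^{-1/2}R_tP_t^{-1/2}\to I$, after which $|\operatorname{tr}(EP_t)|\le\|E\|_2\operatorname{tr}(P_t)$ turns relative convergence into the trace ratios $\operatorname{tr}(Q_t)/\operatorname{tr}(P_t)\to 1$ and $\operatorname{tr}(R_t)/\operatorname{tr}(P_t)\to 1$. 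I checked the identities you rely on (the determinant ratio equals $e^{\phi_t}$, $G_t^\top M_tG_t = P_t^{-1/2}R_tP_t^{-1/2}$, boundedness of $\|G_t\|_2$, the eigenvalue squeeze); they are sound, and your version makes the role of the AM/GM-type tightness more transparent than the paper's somewhat informal limit-inside-a-trace step.

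The genuine gap is your final deduction of the per-step condition $\|(\tilde B_t-I)\tilde s_t\|/\|\tilde s_t\|\to 0$. You claim that the linear rate $\|\xi_{t+1}-\xi^*\|\le\tfrac12\|\xi_t-\xi^*\|$ makes the $q$ stored steps comparable up to a fixed factor $C(q)$, so that $\|\tilde S_t\|_F/\|\tilde s_t\|$ stays bounded. The contraction runs in the wrong direction for this: it gives $\|\xi_{t-j}-\xi^*\|\ge 2^{j}\|\xi_t-\xi^*\|$, and since $\tfrac12\|\sigma_j\|\le\|s_j\|\le\tfrac32\|\sigma_j\|$ with $\sigma_j=\xi_j-\xi^*$, the \emph{older} stored steps are at least geometrically \emph{larger} than the current one. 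There is no lower bound on the contraction factor, and indeed if convergence is superlinear --- the very conclusion being sought --- then $\|s_t\|/\|s_{t-1}\|\to 0$ and $\|\tilde S_t\|_F/\|\tilde s_t\|\to\infty$. Consequently $\|(\tilde B_t-I)\tilde s_t\|\le\epsilon_t\|\tilde S_t\|_F$ does not control $\|(\tilde B_t-I)\tilde s_t\|/\|\tilde s_t\|$, and the per-step Dennis--Mor\'e condition does not follow from the block condition under this normalization. The paper sidesteps this entirely: it applies the characterization of \citet{dennis1974characterization} to the batch iterate $\tilde x_t=(x_t,\dots,x_{t-q+1})$, concluding superlinear convergence of the $q$-tuple (a $q$-step superlinear rate), for which the Frobenius-normalized block condition is exactly the right hypothesis. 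Replace your last deduction with that batch-level conclusion, or supply a separate argument for the per-step statement; as written, that one step fails.
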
 

\begin{proof}
    The property that 
    \[
    \chi_t = \sum_{i=1}^q \underbrace{(1+\tr(\lambda_i) -\ln(\lambda_i))}_{\geq 0} \to 0
    \]
    implies that all the eigenvalues $\lambda_i$ of $(\tilde S_t^\top \tilde B_t\tilde S_t)^{-1}(\tilde S_t^\top \tilde B_t\tilde B_t \tilde S_t)$ converge to 1. This implies that in the limit, $(\tilde S_t^\top \tilde B_t\tilde S_t)^{-1}(\tilde S_t^\top \tilde B_t\tilde B_t \tilde S_t)\to I$, and 
    \begin{equation}
    \lim_{t\to\infty} \tilde S_t^\top \tilde B_t(\tilde B_t-I)\tilde S_t=
    0.
    \label{eq:lemqsuperlinear-helper}
    \end{equation}
    At the same time, $\phi_t\to 0$ implies that 
    \[
    \frac{\det(\tilde S_t^\top \tilde B_t\tilde S_t)^2}{\det(\tilde S_t^\top \tilde B_t\tilde B_t \tilde S_t)\det(\tilde S_t^\top \tilde S_t)}\overset{\eqref{eq:lemqsuperlinear-helper}}{ =} \frac{\det(\tilde S_t^\top \tilde B_t\tilde S_t)}{\det(\tilde S_t^\top \tilde S_t)} \to 1.
    \]
For any two PSD matrices $A$ and $B$, $\tr(AB) \leq \tr(A)\|B\|_2 \leq \tr(A)\tr(B)$, so     
\begin{eqnarray*}
\frac{\|\tilde B_t\tilde S_t - \tilde S_t\|_F^2}{\|\tilde S_t\|^2_F} &=& \frac{\tr((\tilde B_t\tilde S_t-\tilde S_t)^\top (\tilde B_t\tilde S_t-\tilde S_t) )}{\tr(\tilde S_t^\top \tilde S_t)} \leq \tr((\tilde B_t\tilde S_t-\tilde S_t)^\top (\tilde B_t\tilde S_t-\tilde S_t) (\tilde S_t^\top \tilde S_t)^{-1}).
\end{eqnarray*}
Expanding out,
\begin{align*}
\tr((\tilde B_t\tilde S_t-\tilde S_t)^\top & (\tilde B_t\tilde S_t-\tilde S_t) (\tilde S_t^\top \tilde S_t)^{-1}) \\
&= \tr(\tilde S_t^\top \tilde B_t\tilde B_t\tilde S_t (\tilde S_t^\top \tilde S_t)^{-1} - 2\tilde S_t^\top \tilde B_t\tilde S_t(\tilde S_t^\top \tilde S_t)^{-1}  + I)\\
&\to   \tr(I  - \tilde S_t^\top \tilde B_t\tilde S_t (\tilde S_t^\top S)^{-1}) \\
&= q - \tr(U_t^\top \tilde B_tU_t)
\end{align*}
where $U_t = \tilde S_t(\tilde S_t^\top \tilde S_t)^{-1/2}$
Here, $\det(U_t^\top \tilde B_t U_t) = \det(\tilde S_t^\top \tilde B_t \tilde S_t(\tilde S_t^\top \tilde S_t)^{-1}) = 1$, so using AM/GM inequality, 
\[
\tr(U_t^\top \tilde B_tU_t) \geq q\det(U_t^\top \tilde B_tU_t)^{1/q} = q. 
\]
So, 
 \begin{eqnarray*}
\frac{\|\tilde B_t\tilde S_t - \tilde S_t\|_F^2}{\|\tilde S_t\|^2_F} &\to & 0
\end{eqnarray*}
This in turn implies that, for the batch of iterates $\tilde x_t = (x_t,x_{t-1},...,x_{t-q+1})$, that $\tilde x_t\to (x^*,...,x^*)$ $q$-superlinearly, as a direct result of   \citet{dennis1974characterization}.

\end{proof}

\section{Extended numerical results}
\label{app:numerical}
\subsection{Logistic regression extra experiments}
\label{app:extranumerics:logreg}
We experiment with two types of models
\begin{align}
 A_{i,j}& =   b_iz_{i,j} + \omega  z_{i,j} c_j
    \label{eq:logreg_matrix_highsignal} &\tag{High signal regime}\\
 A_{i,j} &=  b_iz_{i,j}(1-c_j) + \omega z_{i,j} c_j
    \label{eq:logreg_matrix_lowsignal} &\tag{Low signal regime}
\end{align}
where
$c_j = \exp(-\bar c j/n)$ is the data decay rate (decaying influence of each feature), and 
$z_{i,j} \sim \mathcal N(0,1)$ Gaussian distributed i.i.d. $\omega$ controls the signal to noise ratio of the data, and the labels   $b_i \in \{1,-1\}$ with equal probability (class balanced).

\setlength{\tabcolsep}{2pt}
\renewcommand{\arraystretch}{0.8}

\begin{table}[ht!]
{\small
\centering
\begin{tabular}{|l||cc|cc|cc||cc|cc|cc|}
\hline
&\multicolumn{6}{c||}{Low signal regime}&\multicolumn{6}{c|}{High signal regime}\\
 & \multicolumn{2}{c|}{$\bar c = 10$}  	& \multicolumn{2}{c|}{$\bar c =  30$} 	& \multicolumn{2}{c||}{$\bar c =  50$} 	& \multicolumn{2}{c|}{$\bar c = 10$} 	& \multicolumn{2}{c|}{$\bar c =  30$} 	& \multicolumn{2}{c|}{$\bar c =  50$} 	\\
&curve 	&anch. 	&curve 	&anch. 	&curve 	&anch. 	&curve 	& anch. 	&curve 	&  anch. 	&curve 	&anch. 	\\\hline
Newton's	&  11	 &  11	 &  11	 &  11	 &  11	 &  11	 &  11	 &  11	 &  11	 &  11	 &  11	 &  11\\
Grad. Desc.	&2051	 &2051	 &2010	 &2010	 &2002	 &2002	 &2357	 &2357	 &2106	 &2106	 &2060	 &2060\\\hline
Br. (d,1)       	& 520	 & 520	 & 513	 & 513	 & 510	 & 510	 & 575	 & 575	 & 529	 & 529	 & 518	 & 518\\
Br. (d,v)	& 483	 & 529	 & 522	 & 526	 & 517	 & 499	 & 715	 & 562	 & 570	 & 515	 & 520	 & \textbf{130}\\
Br. (d,v,r)	& 505	 & 521	 & 502	 & 514	 & 502	 & 512	 & 573	 & 577	 & 525	 & 532	 & 529	 & 523\\
Br. (d,s)	& Inf	 & Inf	 & 539	 &1297	 & 602	 & 708	 & 545	 & 821	 & \textbf{497}	 & 885	 & 484	 &1176\\
Br. (d,s,r)	& 749	 & 576	 & 845	 & 502	 & 745	 & 867	 & 933	 &6925	 &1020	 & 642	 & 835	 & 646\\
Br. (d,p)	& 484	 & 450	 & 502	 & 464	 & 501	 & 477	 & 604	 & 525	 &\textbf{ 466*}	 & 460	 & 438	 & 472\\
Br. (d,p,r)	& Inf	 &1044	 & Inf	 &1050	 &1824	 &1058	 &4828	 &1151	 &2781	 &1089	 &2927	 &1078\\
Br. (d,o)	& Inf	 &8076	 & 560	 & 745	 & 658	 &1406	 & 862	 &1166	 & 570	 & 959	 & 447	 & 709\\
Br. (d,o,32)	& Inf	 &8031	 & 606	 & 930	 & 726	 & 561	 & 651	 &1958	 & 647	 & Inf	 & 598	 & 515\\
Br. (d,o,1000)	& Inf	 &8049	 & 560	 & 745	 & 658	 &1580	 & 862	 &1182	 & 570	 & 959	 & 447	 & 709\\
Br. (d,o,r)	&1303	 & 590	 & 688	 & 500	 & 873	 &1022	 & 787	 &4029	 &1460	 & 748	 &2009	 & 741\\
Br. (d,o,32,r)	&3191	 & 576	 & 721	 & 500	 & 684	 & 878	 & 766	 &4071	 & 857	 & 662	 & 738	 & 678\\
Br. (d,o,1000,r)	&1303	 & 590	 & 688	 & 500	 & 873	 &1011	 & 787	 &4025	 &1364	 & 748	 &2009	 & 741\\
Br. (i,1)       	& 520	 & 520	 & 513	 & 513	 & 510	 & 510	 & 575	 & 575	 & 529	 & 529	 & 518	 & 518\\
Br. (i,v)	& 558	 & 507	 & 471	 & 593	 & 400	 & 813	 & 579	 & 585	 & 534	 & 780	 & 526	 & 507\\
Br. (i,v,r)	& 505	 & 521	 & 502	 & 514	 & 502	 & 512	 & 573	 & 577	 & 525	 & 532	 & 529	 & 523\\
Br. (i,s)	& Inf	 &2122	 & 903	 & 559	 &1543	 & 534	 &1002	 & 869	 &1242	 & 594	 & 555	 & 529\\
Br. (i,s,r)	&1000	 & 631	 &2025	 & 712	 &1017	 & 809	 & 785	 & 672	 &1527	 & 834	 & 596	 & 521\\
Br. (i,p)	& 425	 & 454	 & 144	 &   \textbf{6*}	 &  \textbf{42*}	 &   Inf	 &  91	 & 294	 &   Inf	 &   Inf	 &   Inf	 &   Inf\\
Br. (i,p,r)	& Inf	 & 631	 & Inf	 & 677	 &  \textbf{48}	 & 880	 & 107	 & 666	 &   Inf	 & Inf	 &   Inf	 & Inf\\
Br. (i,o,s)	&1063	 &1664	 &1559	 & 858	 & 957	 & 962	 &1034	 &1056	 &1210	 & 968	 & 746	 &1031\\
Br. (i,o)	&  \textbf{21}	 &   \textbf{21}	 &   \textbf{8}	 &  Inf	 &    Inf	 &  16	 &    \textbf{8}	 &   \textbf{8}	 &  Inf	 &  \textbf{23}	 &  Inf	 &  Inf\\
Br. (i,o,s,r)	&1122	 & 599	 & 800	 & 602	 & 903	 & 576	 & 822	 & 626	 &2671	 & 670	 &1508	 & 528\\
Br. (i,o,r)	            & 119	 & 599	 &    \textbf{8}	 & 602	 &    Inf	 & 576	 &    \textbf{8}	 & 626	 &   Inf	 & 670	 &  Inf	 & 528\\
Br. (i,o,s,32)	& 520	 & 548	 & 611	 & \textbf{331}	 & 606	 & 976	 & 650	 & Inf	 &1025	 & 609	 & \textbf{409}	 & 443\\
Br. (i,o,32)	&  \textbf{21}	 &  \textbf{21}	 &    \textbf{8}	 &  Inf	 &    Inf	 &  \textbf{16}	 &    \textbf{8}	 &    \textbf{8}	 &  Inf	 &  \textbf{23}	 &  Inf	 &  Inf\\
Br. (i,o,s,32,r)	& Inf	 & 599	 & 510	 & 602	 & 633	 & 576	 & 629	 & 626	 & 645	 & 670	 & 532	 & 528\\
Br. (i,o,32,r)	& Inf	 & 599	 &    \textbf{8}	 & 602	 &  Inf	 & 576	 &    \textbf{8}	 & 626	 &  Inf	 & 670	 &  Inf	 & 528\\
Br. (i,o,s,1000)	&1056	 &1684	 &1551	 & 858	 & 957	 & 962	 &1034	 &1056	 &1219	 & 968	 & 746	 &1032\\
Br. (i,o,1000)	&  \textbf{21}	 &  \textbf{21}	 &    \textbf{8}	 &  Inf	 &   Inf	 &  \textbf{16}	 &    \textbf{8}	 &    \textbf{8}	 &   Inf	 &  \textbf{23}	 &  Inf	 &  Inf\\
Br. (i,o,s,1000,r)	&1168	 & 599	 & 800	 & 602	 & 903	 & 576	 & 822	 & 626	 &2671	 & 670	 &1449	 & 528\\
Br. (i,o,1000,r)	& 119	 & 599	 &   \textbf{8}	 & 602	 &   Inf	 & 576	 &    \textbf{8}	 & 626	 &   Inf  & 670	 &  Inf	 & 528\\\hline
Pow. (d,1)       	& 532	 & 532	 & 529	 & 529	 & 528	 & 528	 & 584	 & 584	 & 535	 & 535	 & 521	 & 521\\
Pow. (d,v)	& 409	 & 506	 & 256	 & 325	 & Inf	 & 499	 & 508	 & Inf	 & 508	 & 593	 & 489	 & 535\\
Pow. (d,v,r)	& 524	 & 525	 & 501	 & 521	 & 499	 & 519	 & 574	 & 580	 & 599	 & 537	 & 531	 & 528\\
Pow. (d,s)	&1520	 & 496	 &1114	 & 667	 & 439	 & 538	 &1154	 & 494	 &1503	 &1442	 & 519	 &1483\\
Pow. (d,s,r)	& 809	 & 537	 & 565	 & 512	 & 639	 & 519	 & 644	 & 573	 & 577	 & 537	 & 542	 & 548\\
Pow. (d,p)	& 534	 & 520	 & 510	 & 463	 & 483	 & 487	 & 547	 & 554	 & 499	 & 488	 & 467	 & 500\\
Pow. (d,p,r)	&1599	 & 860	 &2802	 & 527	 &1742	 & 529	 &1846	 & 703	 &2063	 & 601	 &1911	 & 625\\
Pow. (d,o)	& 529	 & 492	 & 502	 & 798	 &1485	 & 545	 & 644	 & 589	 & 616	 & 610	 & 557	 &2978\\
Pow. (d,o,32)	&2205	 & 537	 & 608	 &1071	 & 657	 & 814	 & Inf	 & 965	 &1216	 & 687	 & 551	 &1197\\
Pow. (d,o,1000)	& 529	 & 492	 & 502	 & 798	 &1485	 & 545	 & 644	 & 589	 & 616	 & 610	 & 557	 &2978\\
Pow. (d,o,r)	& 583	 & 537	 & 498	 & 512	 & 692	 & 519	 & 615	 & 573	 & 471	 & 537	 & 542	 & 548\\
Pow. (d,o,32,r)	& 515	 & 537	 & 651	 & 512	 & 680	 & 519	 &1420	 & 573	 & 471	 & 537	 & 542	 & 548\\
Pow. (d,o,1000,r)	& 583	 & 537	 & 498	 & 512	 & 692	 & 519	 & 615	 & 573	 & 471	 & 537	 & 542	 & 548\\
Pow. (i,1)       	& Inf	 & Inf	 & Inf	 & Inf	 & Inf	 & Inf	 & 294	 & 294	 & 746	 & 746	 & Inf	 & Inf\\
Pow. (i,v)	& Inf	 & Inf	 & Inf	 & Inf	 & Inf	 & Inf	 & Inf	 & Inf	 & Inf	 & Inf	 & Inf	 & Inf\\
Pow. (i,v,r)	&  Inf	 & Inf	 & Inf	 & Inf	 & Inf	 & Inf	 & Inf	 & Inf	 & Inf	 & Inf	 & Inf	 & Inf\\
Pow. (i,s)	 & Inf	 & Inf	 & 407	 & 308	 & Inf	 & 532	 & 519	 & 612	 & Inf	 & 487	 & Inf	 & 251\\
Pow. (i,s,r)	& Inf	 & Inf	 & Inf	 & Inf	 &1122	 & 190	 &1022	 & Inf	 & Inf	 & Inf	 & Inf	 & 281\\
Pow. (i,p)	& 537	 &1822	 & 367	 & 377	 & 245	 & 367	 & 423	 &1655	 & 245	 & 601	 & 277	 & 356\\
Pow. (i,p,r)	&2002	 & Inf	 & 465	 & 754	 & Inf	 & 434	 &3302	 & Inf	 & 322	 & 501	 & Inf	 & 286\\
Pow. (i,o,s)	& Inf	 & Inf	 & Inf	 & 389	 & 453	 & Inf	 & Inf	 & Inf	 & Inf	 & 107	 & 828	 & 471\\ 
Pow. (i,o)	&   8	 &  Inf	 &   8	 &   \textbf{6}	 &   8	 &  \textbf{ 6}	 &   \textbf{5}	 &   \textbf{5}	 &   \textbf{8}	 &   \textbf{6}	 &  \textbf{10}	 &   \textbf{6}\\
Pow. (i,o,s,r)	&2091	 & Inf	 &1743	 &  60	 & Inf	 & Inf	 &  Inf	 & Inf	 &2152	 & Inf	 & 102	 &1289\\
Pow. (i,o,r)	&   \textbf{7}	 &   \textbf{7}	 &   \textbf{7}	 &   7	 &   7	 &   7	 &   \textbf{5}	 &   \textbf{5}	 &  49	 &  Inf	 &  82	 &  89\\
Pow. (i,o,s,32)	& Inf	 & Inf	 & Inf	 & 389	 & 453	 & Inf	 & Inf	 & Inf	 & Inf	 & 107	 & 828	 & 471\\
Pow. (i,o,32)	&   8	 &   Inf	 &   8	 &   \textbf{6}	 &   8	 &   \textbf{6}	 &   \textbf{5}	 &   \textbf{5}	 &   \textbf{8}	 &   \textbf{6}	 &  \textbf{10}	 &   \textbf{6}\\
Pow. (i,o,s,32,r)	&2091	 & Inf	 &1743	 &  60	 & Inf	 & Inf	 &  Inf	 & Inf	 &2152	 & Inf	 & 102	 &1289\\
Pow. (i,o,32,r)	&   \textbf{7}	 &   \textbf{7}	 &   \textbf{7}	 &   7	 &   \textbf{7}	 &   7	 &   \textbf{5}	 &   \textbf{5}	 &  49	 &  Inf	 &  82	 &  89\\
Pow. (i,o,s,1000)	& Inf	 & Inf	 & Inf	 & 389	 & 453	 & Inf	 & Inf	 & Inf	 & Inf	 & 107	 & 828	 & 471\\
Pow. (i,o,1000)	&   8	 &   Inf	 &   8	 &   \textbf{6}	 &   8	 &   \textbf{6}	 &   \textbf{5}	 &   \textbf{5}	 &   \textbf{8}	 &   \textbf{6}	 &  \textbf{10}	 &   \textbf{6}\\
Pow. (i,o,s,1000,r)	&2091	 & Inf	 &1743	 &  60	 & Inf	 & Inf	 &  Inf	 & Inf	 &2152	 & Inf	 & 102	 &1289\\
Pow. (i,o,1000,r)	&   \textbf{7}	 &   \textbf{7}	 &   \textbf{7}	 &   7	 &   \textbf{7}	 &   7	 &   \textbf{5}	 &   \textbf{5}	 &  49	 &  Inf	 &  82	 &  89\\
\hline
\end{tabular}
}
\caption{\textbf{LogReg.} Number of iterations until $\|\nabla f(x_t)\|/\|\nabla f(x_0)\| \leq \epsilon = 10^{-4}$. $q = 5$. inf = more than 10000 iterations. $\sigma = 10, m = 100, n = 50$.   
d = direct update, i = inverse update.
1 = single-secant, v = vanilla, s = symmetric, p = PSD, o = ours.  
s = scaling, r = rejection with 0.01 tolerance.
Number refers to $\nu$ value in $\mu$-correction.
Broyden and Powell methods shown.
  }
\label{tab:app:logreg:methodsA}
\end{table}

\begin{table}[ht!]
{\small
\centering
\begin{tabular}{|l||cc|cc|cc||cc|cc|cc|}
\hline
&\multicolumn{6}{c||}{Low signal regime}&\multicolumn{6}{c|}{High signal regime}\\
 & \multicolumn{2}{c|}{$\bar c = 10$}  	& \multicolumn{2}{c|}{$\bar c =  30$} 	& \multicolumn{2}{c||}{$\bar c =  50$} 	& \multicolumn{2}{c|}{$\bar c = 10$} 	& \multicolumn{2}{c|}{$\bar c =  30$} 	& \multicolumn{2}{c|}{$\bar c =  50$} 	\\
&curve 	&anch. 	&curve 	&anch. 	&curve 	&anch. 	&curve 	& anch. 	&curve 	&  anch. 	&curve 	&anch. 	\\\hline

DFP (d,1)       	& 504	 & 504	 & 500	 & 500	 & 499	 & 499	 & 570	 & 570	 & 522	 & 522	 & 512	 & 512\\
DFP (d,v)	& 663	 & 439	 & 764	 & 990	 & 515	 & 548	 &1320	 & 645	 & 675	 & 456	 & 687	 & 578\\
DFP (d,v,r)	& 508	 & 509	 & 505	 & 507	 & 506	 & 507	 & 575	 & 576	 & 528	 & 530	 & 542	 & 521\\
DFP (d,s)	& 635	 & 812	 & 547	 & 623	 &1961	 & 515	 & 596	 & 903	 & 940	 & 452	 & Inf	 & 785\\
DFP (d,s,r)	& 671	 & 509	 & 681	 & 651	 & 636	 & 551	 & 602	 &2582	 & 639	 & 687	 & 518	 & 668\\
DFP (d,p)	& 512	 & 501	 & 421	 & 453	 & 456	 & 451	 & 487	 & 489	 & 501	 & 484	 & 487	 & 481\\
DFP (d,p,r)	& 909	 & 549	 &1210	 & 539	 & 644	 & 537	 & 838	 & 624	 & 770	 & 561	 &1306	 & 559\\
DFP (d,o)	& 666	 & 603	 & 646	 & 447	 & 535	 &1115	 & Inf	 & 707	 & 511	 & Inf	 & 598	 & 463\\
DFP (d,o,32)	& 666	 & 603	 &1040	 & 719	 & 526	 &1764	 &1079	 & 681	 &2640	 & 560	 & Inf	 & 851\\
DFP (d,o,1000)	& 666	 & 603	 & 646	 & 447	 & 535	 &1115	 & Inf	 & 707	 & 511	 & Inf	 & 598	 & 463\\
DFP (d,o,r)	& 651	 & 509	 &1333	 & 513	 & 648	 & 504	 & 690	 & 579	 & 739	 & 670	 & 624	 & 670\\
DFP (d,o,32,r)	&2221	 & 509	 & 554	 & 513	 &1698	 & 504	 &1145	 & 579	 & 868	 & 670	 & 683	 & 670\\
DFP (d,o,1000,r)	& 651	 & 509	 &1333	 & 513	 & 648	 & 504	 & 690	 & 579	 & 739	 & 670	 & 624	 & 670\\
DFP (i,1)       	& 504	 & 504	 & 500	 & 500	 & 499	 & 499	 & 570	 & 570	 & 522	 & 522	 & 512	 & 512\\
DFP (i,v)	& Inf	 & Inf	 & Inf	 & Inf	 & Inf	 & Inf	 & Inf	 &   Inf	 & Inf	 & Inf	 & Inf	 & Inf\\
DFP (i,v,r)	& Inf	 & Inf	 & Inf	 & Inf	 & Inf	 & Inf	 & Inf	 & Inf	 & Inf	 & Inf	 & Inf	 & Inf\\
DFP (i,s)	& 530	 & 513	 & 524	 & 513	 & 518	 & 524	 & 602	 & 594	 & 582	 & 784	 & 559	 & 729\\
DFP (i,s,r)	& 760	 & 511	 & 548	 & 510	 &1394	 & 506	 & 621	 & 575	 & 565	 & 526	 & 557	 & 516\\
DFP (i,p)	& 425	 & 708	 & 434	 & 369	 & 442	 & 330	 & 503	 & 592	 & 389	 &  37	 & 420	 & 426\\
DFP (i,p,r)	& 703	 & 511	 & 438	 & 501	 & 477	 & 523	 & 598	 & 581	 & 444	 & 542	 & 411	 & 508\\
DFP (i,o,s)	& 461	 & 390	 & 451	 & 408	 & 442	 & 398	 & 524	 & 419	 & 582	 & 418	 & 467	 & 405\\
DFP (i,o)	&   \textbf{7}	 &   \textbf{7}	 &   \textbf{6}	 &   \textbf{6}	 &   \textbf{6}	 &   \textbf{6}	 &   \textbf{7}	 &   \textbf{7}	 &   \textbf{6}	 &   \textbf{6}	 &   \textbf{6}	 &   \textbf{6}\\
DFP (i,o,s,r)	& 968	 & 405	 & 470	 & 403	 & 433	 & 402	 & 475	 & 424	 & 471	 & 410	 & 456	 & 408\\
DFP (i,o,r)	&   Inf	 &  12	 &   \textbf{6}	 &  12	 &   \textbf{6}	 &  12	 &   9	 &  13	 &   \textbf{6}	 &  12	 &   \textbf{6}	 &  13\\
DFP (i,o,s,32)	& 460	 & 405	 & 456	 & 419	 & 450	 & 393	 & 461	 & 472	 & 443	 & 410	 & 455	 & 408\\
DFP (i,o,32)	& Inf	 & Inf &   \textbf{6}	 &   \textbf{6}	 &   \textbf{6}	 &   \textbf{6}	 &   \textbf{7}	 &   \textbf{7}	 &   \textbf{6}	 &   \textbf{6}	 &   \textbf{6}	 &   \textbf{6}\\
DFP (i,o,s,32,r)	& 456	 & 405	 & 445	 & 403	 & 407	 & 403	 & 465	 & 435	 & 543	 & 414	 & 494	 & 410\\
DFP (i,o,32,r)	&   Inf	 &  12	 &   \textbf{6}	 &  12	 &   \textbf{6}	 &  12	 &   9	 &  13	 &   \textbf{6}	 &  12	 &   \textbf{6}	 &  13\\ 
DFP (i,o,s,1000)	& 461	 & 390	 & 451	 & 408	 & 442	 & 398	 & 524	 & 419	 & 582	 & 418	 & 467	 & 405\\
DFP (i,o,1000)		& Inf	 & Inf		 &   \textbf{6}	 &   \textbf{6}	 &   \textbf{6}	 &  \textbf{6}	 &   \textbf{7}	 &   \textbf{7}	 &   \textbf{6}	 &   \textbf{6}	 &   \textbf{6}	 &   \textbf{6}\\
DFP (i,o,s,1000,r)	& 968	 & 405	 & 470	 & 403	 & 433	 & 402	 & 475	 & 424	 & 471	 & 410	 & 456	 & 408\\
DFP (i,o,1000,r)	&   Inf	 &  12	 &   \textbf{6}	 &  12	 &   \textbf{6}	 &  12	 &   9	 &  13	 &   \textbf{6}	 &  12	 &   \textbf{6}	 &  13\\\hline
BFGS (d,1)       	& 502	 & 502	 & 498	 & 498	 & 498	 & 498	 & 570	 & 570	 & 522	 & 522	 & 512	 & 512\\
BFGS (d,v)	& 499	 & 499	 & 498	 & 498	 & 500	 & 490	 & 569	 & 569	 & 522	 & 519	 & 517	 & 515\\
BFGS (d,v,r)	& 502	 & 503	 & 500	 & 501	 & 500	 & 500	 & 572	 & 572	 & 524	 & 525	 & 541	 & 515\\
BFGS (d,s)	& 576	 &1087	 &1246	 & 643	 &1268	 & 560	 & 916	 & 869	 & 659	 &1114	 & 542	 & 571\\
BFGS (d,s,r)	& 750	 & 506	 & 596	 & 505	 &1170	 & 557	 & 734	 & 584	 & 631	 & 586	 & 558	 & 715\\
BFGS (d,p)	& 475	 & 453	 & 487	 & 462	 & 459	 & 460	 & 467	 & 506	 & 480	 & 496	 & 482	 & 500\\
BFGS (d,p,r)	&1807	 & 545	 & 740	 & 535	 & 660	 & 533	 &1688	 & 631	 & 878	 & 556	 & 643	 & 507\\
BFGS (d,o)	&1768	 & 725	 & 784	 & 935	 & 859	 &1007	 & 623	 & 928	 & 565	 & 811	 & 537	 & 688\\
BFGS (d,o,32)	& 796	 & 740	 & 815	 & 654	 & 546	 & 634	 & 810	 & 782	 &1755	 & 669	 & 731	 &1332\\
BFGS (d,o,1000)	&1768	 & 725	 & 784	 & 935	 & 859	 &1007	 & 623	 & 928	 & 565	 & 811	 & 537	 & 688\\
BFGS (d,o,r)	& 544	 & 503	 & 578	 & 502	 & 689	 & 502	 & 804	 & 574	 & 543	 & 525	 & 523	 & 515\\
BFGS (d,o,32,r)	& 583	 & 503	 & 717	 & 502	 & 689	 & 502	 & 710	 & 574	 & 543	 & 525	 & 523	 & 515\\
BFGS (d,o,1000,r)	& 544	 & 503	 & 578	 & 502	 & 689	 & 502	 & 804	 & 574	 & 543	 & 525	 & 523	 & 515\\
BFGS (i,1)       	& 502	 & 502	 & 498	 & 498	 & 498	 & 498	 & 570	 & 570	 & 522	 & 522	 & 512	 & 512\\
BFGS (i,v)	& 499	 & 502	 & 500	 & 502	 & 497	 & 502	 & 569	 & 576	 & 527	 & 519	 & 511	 & 517\\
BFGS (i,v,r)	& 502	 & 503	 & 500	 & 501	 & 500	 & 500	 & 572	 & 572	 & 524	 & 525	 & 541	 & 515\\
BFGS (i,s)	& 530	 & 539	 & 926	 &1151	 &1509	 & 567	 & 608	 & 811	 & 851	 &1552	 & Inf	 & 563\\
BFGS (i,s,r)	& 884	 & 507	 & 588	 & 505	 & 568	 & 504	 &1235	 & 574	 &1131	 & 527	 &1141	 & 520\\
BFGS (i,p)	& 265	 & 268	 & 152	 & 183	 & 281	 & 289	 & 377	 & 305	 & 256	 & 288	 & 549	 & 195\\
BFGS (i,p,r)	& 665	 & 507	 &1006	 & 505	 & 258	 & 505	 & 399	 & 575	 & 255	 & 527	 & 194	 & 516\\
BFGS (i,o,s)	& 466	 & 667	 & 446	 & 854	 & 651	 & 488	 & 646	 & 418	 &1063	 & 494	 & 513	 & 473\\
BFGS (i,o)	&   \textbf{5}	 &   \textbf{5}	 &   \textbf{5}	 &  \textbf{5}	 &   \textbf{5}	 &   \textbf{5}	 &   \textbf{5}	 &   \textbf{5}	 &   \textbf{6}	 &   \textbf{5}	 &   \textbf{5}	 &   \textbf{5}\\
BFGS (i,o,s,r)	& 474	 & 474	 & 472	 & 472	 & 472	 & 472	 & 498	 & 498	 & 487	 & 487	 & 485	 & 485\\
BFGS (i,o,r)	&  10	 &  10	 &  10	 &  10	 &  10	 &  10	 &  10	 &  10	 &  10	 &  10	 &  10	 &  10\\
BFGS (i,o,s,32)	& 459	 & 417	 &1336	 & 409	 & 450	 & 574	 & 460	 & 459	 & 451	 & 424	 & 501	 & 408\\
BFGS (i,o,32)	&   \textbf{5}	 &   \textbf{5}	 &   \textbf{5}	 &   \textbf{5}	 &   \textbf{5}	 &   \textbf{5}	 &   \textbf{5}	 &   \textbf{5}	 &   \textbf{6}	 &   \textbf{5}	 &   \textbf{5}	 &   \textbf{5}\\
BFGS (i,o,s,32,r)	& 474	 & 474	 & 472	 & 472	 & 472	 & 472	 & 498	 & 498	 & 487	 & 487	 & 485	 & 485\\
BFGS (i,o,32,r)	&  10	 &  10	 &  10	 &  10	 &  10	 &  10	 &  10	 &  10	 &  10	 &  10	 &  10	 &  10\\
BFGS (i,o,s,1000)	& 466	 & 667	 & 446	 & 854	 & 651	 & 488	 & 646	 & 418	 &1064	 & 494	 & 513	 & 473\\
BFGS (i,o,1000)	&   \textbf{5}	 &   \textbf{5}	 &   \textbf{5}	 &   \textbf{5}	 &   \textbf{5}	 &   \textbf{5}	 &   \textbf{5}	 &   \textbf{5}	 &   \textbf{6}	 &   \textbf{5}	 &   \textbf{5}	 &   \textbf{5}\\
BFGS (i,o,s,1000,r)	& 474	 & 474	 & 472	 & 472	 & 472	 & 472	 & 498	 & 498	 & 487	 & 487	 & 485	 & 485\\
BFGS (i,o,1000,r)	&  10	 &  10	 &  10	 &  10	 &  10	 &  10	 &  10	 &  10	 &  10	 &  10	 &  10	 &  10\\

\hline
\end{tabular}
}
\caption{\textbf{LogReg.} Number of iterations until $\|\nabla f(x_t)\|/\|\nabla f(x_0)\| \leq \epsilon = 10^{-4}$. $q = 5$. inf = more than 10000 iterations. $\sigma = 10, m = 100, n = 50$.  
d = direct update, i = inverse update.
1 = single-secant, v = vanilla, s = symmetric, p = PSD, o = ours.
s = scaling, r = rejection with 0.01 tolerance.
Number refers to $\nu$ value in $\mu$-correction.
DFP and BFGS methods shown.}
\label{tab:app:logreg:methodsB}
\end{table}

\setlength{\tabcolsep}{6pt}  
\renewcommand{\arraystretch}{1}

\subsection{$p$-order minimization, extra experiments}
\label{app:extranumerics:pnormraised}

This section presents extended results for the $p$-power minimization, in tables \ref{tab:app:pnormraised:p1.5:methodsA}- \ref{tab:app:pnormraised:p3.5:methodsB}.

\setlength{\tabcolsep}{3pt}
\renewcommand{\arraystretch}{0.8}

\begin{table}[ht!]
{\small
\centering

}
\caption{\textbf{$p$-order minimization, $p = 3.5$.}  Number  of iterations until $\|\nabla f(x_t)\|/\|\nabla f(x_0)\| \leq \epsilon = 10^{-2}$. $q = 5$. inf = more than 10000 iterations. $m = 100, n = 50$.   
d = direct update, i = inverse update.
1 = single-secant, v = vanilla, s = symmetric, p = PSD, o = ours.
s = scaling, r = rejection with 0.01 tolerance.
Number refers to $\nu$ value in $\mu$-correction.
DFP and BFGS methods shown.}
\label{tab:app:pnormraised:p3.5:methodsB}
\end{table}

\setlength{\tabcolsep}{6pt}  
\renewcommand{\arraystretch}{1}


\subsection{Cross entropy extra experiments}
\label{app:extranumerics:mcloss}
This section presents extended results for the multiclass logistic regression minimization, in tables \ref{tab:app:mcloss:methodsA} and  \ref{tab:app:mcloss:methodsB}.

\setlength{\tabcolsep}{3pt}
\renewcommand{\arraystretch}{0.8}

\begin{table}[ht!]
{\small
\centering
\begin{tabular}{|l||ccc|ccc|ccc|}
\hline
&\multicolumn{3}{c|}{high noise ($\sigma = 1$)}&\multicolumn{3}{c|}{medium noise ($\sigma = 0.1$)}&\multicolumn{3}{c|}{low noise ($\sigma = 0.01$)}\\
 & $\bar c = 10$  	&$\bar c =  30$ 	&$\bar c =  50$  & $\bar c = 10$  	&$\bar c =  30$ 	&$\bar c =  50$ 	& $\bar c = 10$  	&$\bar c =  30$ 	&$\bar c =  50$ 		\\\hline
Grad. Desc.	&  \multicolumn{9}{c|}{------------------------------ inf ------------------------------}\\
Br. (d,1)       	&  \multicolumn{9}{c|}{------------------------------ inf ------------------------------}\\
Br. (d,v)	&  \multicolumn{9}{c|}{------------------------------ inf ------------------------------}\\
Br. (d,v,r)	&  \multicolumn{9}{c|}{------------------------------ inf ------------------------------}\\
Br. (d,s)	&  \multicolumn{9}{c|}{------------------------------ inf ------------------------------}\\
Br. (d,s,r)	&  \multicolumn{9}{c|}{------------------------------ inf ------------------------------}\\
Br. (d,p)	&  \multicolumn{9}{c|}{------------------------------ inf ------------------------------}\\
Br. (d,p,r)	& \multicolumn{9}{c|}{------------------------------ inf ------------------------------}\\
Br. (d,o)	&  \multicolumn{9}{c|}{------------------------------ inf ------------------------------}\\
Br. (d,o,10)	&  \multicolumn{9}{c|}{------------------------------ inf ------------------------------}\\
Br. (d,o,100)	& \multicolumn{9}{c|}{------------------------------ inf ------------------------------}\\
Br. (d,o,r)	& \multicolumn{9}{c|}{------------------------------ inf ------------------------------}\\
Br. (d,o,10,r)	&  \multicolumn{9}{c|}{------------------------------ inf ------------------------------}\\
Br. (d,o,100,r)	&  \multicolumn{9}{c|}{------------------------------ inf ------------------------------}\\
Br. (i,1)       	&  \multicolumn{9}{c|}{------------------------------ inf ------------------------------}\\
Br. (i,v) 	&  \multicolumn{9}{c|}{------------------------------ inf ------------------------------}\\
Br. (i,v,r)	&  \multicolumn{9}{c|}{------------------------------ inf ------------------------------}\\
Br. (i,s) 	&\textbf{1006}	 &9679	 &4566	 &\textbf{1052}	 & Inf	 & Inf	 &\textbf{1083}	 &\textbf{1347}	 & Inf\\
Br. (i,s,r)	&  \multicolumn{9}{c|}{------------------------------ inf ------------------------------}\\
Br. (i,p) 	&  \multicolumn{9}{c|}{------------------------------ inf ------------------------------}\\
Br. (i,p,r)	&  \multicolumn{9}{c|}{------------------------------ inf ------------------------------}\\
Br. (i,o,s) 	&  \multicolumn{9}{c|}{------------------------------ inf ------------------------------}\\
Br. (i,o) 	&  \multicolumn{9}{c|}{------------------------------ inf ------------------------------}\\
Br. (i,o,s,10) 	&1435	 &\textbf{8457}	 &\textbf{4519}	 &5284	 &\textbf{3089}	 & Inf	 &2651	 &1600	 &\textbf{2845}\\
Br. (i,o,10) 	&  \multicolumn{9}{c|}{------------------------------ inf ------------------------------}\\
Br. (i,o,s,100) 	& Inf	 & Inf	 & Inf	 &8882	 &7813	 & Inf	 & Inf	 & Inf	 & Inf\\
Br. (i,o,100) 	&  \multicolumn{9}{c|}{------------------------------ inf ------------------------------}\\
Br. (i,o,s,r)	&  \multicolumn{9}{c|}{------------------------------ inf ------------------------------}\\
Br. (i,o,r)	&  \multicolumn{9}{c|}{------------------------------ inf ------------------------------}\\
Br. (i,o,s,10,r)	& \multicolumn{9}{c|}{------------------------------ inf ------------------------------}\\
Br. (i,o,10,r)	&  \multicolumn{9}{c|}{------------------------------ inf ------------------------------}\\
Br. (i,o,s,100,r)	&  \multicolumn{9}{c|}{------------------------------ inf ------------------------------}\\
Br. (i,o,100,r)	&  \multicolumn{9}{c|}{------------------------------ inf ------------------------------}\\
\hline
Pow. (d,1)       	&  \multicolumn{9}{c|}{------------------------------ inf ------------------------------}\\
Pow. (d,v)	& \multicolumn{9}{c|}{------------------------------ inf ------------------------------}\\
Pow. (d,v,r)	& \multicolumn{9}{c|}{------------------------------ inf ------------------------------}\\
Pow. (d,s)	&  \multicolumn{9}{c|}{------------------------------ inf ------------------------------}\\
Pow. (d,s,r)	&  \multicolumn{9}{c|}{------------------------------ inf ------------------------------}\\
Pow. (d,p)	&  \multicolumn{9}{c|}{------------------------------ inf ------------------------------}\\
Pow. (d,p,r)	&  \multicolumn{9}{c|}{------------------------------ inf ------------------------------}\\
Pow. (d,o)	&  \multicolumn{9}{c|}{------------------------------ inf ------------------------------}\\
Pow. (d,o,10)	&  \multicolumn{9}{c|}{------------------------------ inf ------------------------------}\\
Pow. (d,o,100)	&  \multicolumn{9}{c|}{------------------------------ inf ------------------------------}\\
Pow. (d,o,r)	&  \multicolumn{9}{c|}{------------------------------ inf ------------------------------}\\
Pow. (d,o,10,r)	& \multicolumn{9}{c|}{------------------------------ inf ------------------------------}\\
Pow. (d,o,100,r)	&  \multicolumn{9}{c|}{------------------------------ inf ------------------------------}\\
Pow. (i,1)       	&  \multicolumn{9}{c|}{------------------------------ inf ------------------------------}\\
Pow. (i,v) 	&  \multicolumn{9}{c|}{------------------------------ inf ------------------------------}\\
Pow. (i,v,r)	& \multicolumn{9}{c|}{------------------------------ inf ------------------------------}\\
Pow. (i,s) 	& \multicolumn{9}{c|}{------------------------------ inf ------------------------------}\\
Pow. (i,s,r)	&  \multicolumn{9}{c|}{------------------------------ inf ------------------------------}\\
Pow. (i,p) 	&  \multicolumn{9}{c|}{------------------------------ inf ------------------------------}\\
Pow. (i,p,r)	& \multicolumn{9}{c|}{------------------------------ inf ------------------------------}\\
Pow. (i,o,s) 	& \multicolumn{9}{c|}{------------------------------ inf ------------------------------}\\
Pow. (i,o) 	&  \multicolumn{9}{c|}{------------------------------ inf ------------------------------}\\
Pow. (i,o,s,10) 	&  \multicolumn{9}{c|}{------------------------------ inf ------------------------------}\\
Pow. (i,o,10) 	&  \multicolumn{9}{c|}{------------------------------ inf ------------------------------}\\
Pow. (i,o,s,100) 	& \multicolumn{9}{c|}{------------------------------ inf ------------------------------}\\
Pow. (i,o,100) 	&  \multicolumn{9}{c|}{------------------------------ inf ------------------------------}\\
Pow. (i,o,s,r)	&  \multicolumn{9}{c|}{------------------------------ inf ------------------------------}\\
Pow. (i,o,r)	&  \multicolumn{9}{c|}{------------------------------ inf ------------------------------}\\
Pow. (i,o,s,10,r)	&  \multicolumn{9}{c|}{------------------------------ inf ------------------------------}\\
Pow. (i,o,10,r)	& \multicolumn{9}{c|}{------------------------------ inf ------------------------------}\\
Pow. (i,o,s,100,r)	& \multicolumn{9}{c|}{------------------------------ inf ------------------------------}\\
Pow. (i,o,100,r)	&  \multicolumn{9}{c|}{------------------------------ inf ------------------------------}\\

\hline
\end{tabular}
}
\caption{\textbf{Cross-entropy loss}. Number of iterations until $\|\nabla f(x_k)\|/\|\nabla f(x_0)\| \leq \epsilon = 10^{-3}$. $q = 5$. inf = more than 10000 iterations. $m = 100, n = 50$.   
d = direct update, i = inverse update.
1 = single-secant, v = vanilla, s = symmetric, p = PSD, o = ours.
s = scaling, r = rejection with 0.01 tolerance.
Number refers to $\nu$ value in $\mu$-correction.
Broyden and Powell methods shown.}
\label{tab:app:mcloss:methodsA}
\end{table}

\begin{table}[ht!]
{\small
\centering
\begin{tabular}{|l||ccc|ccc|ccc|}
\hline
&\multicolumn{3}{c|}{high noise ($\sigma = 1$)}&\multicolumn{3}{c|}{medium noise ($\sigma = 0.1$)}&\multicolumn{3}{c|}{low noise ($\sigma = 0.01$)}\\
 & $\bar c = 10$  	&$\bar c =  30$ 	&$\bar c =  50$  & $\bar c = 10$  	&$\bar c =  30$ 	&$\bar c =  50$ 	& $\bar c = 10$  	&$\bar c =  30$ 	&$\bar c =  50$ 		\\\hline

DFP (d,1)       	&  \multicolumn{9}{c|}{------------------------------ inf ------------------------------}\\
DFP (d,v)	&  \multicolumn{9}{c|}{------------------------------ inf ------------------------------}\\
DFP (d,v,r)	&  \multicolumn{9}{c|}{------------------------------ inf ------------------------------}\\
DFP (d,s)	& \multicolumn{9}{c|}{------------------------------ inf ------------------------------}\\
DFP (d,s,r)	&  \multicolumn{9}{c|}{------------------------------ inf ------------------------------}\\
DFP (d,p)	&  \multicolumn{9}{c|}{------------------------------ inf ------------------------------}\\
DFP (d,p,r)	&  \multicolumn{9}{c|}{------------------------------ inf ------------------------------}\\
DFP (d,o)	&  \multicolumn{9}{c|}{------------------------------ inf ------------------------------}\\
DFP (d,o,10)	& \multicolumn{9}{c|}{------------------------------ inf ------------------------------}\\
DFP (d,o,100)	&  \multicolumn{9}{c|}{------------------------------ inf ------------------------------}\\
DFP (d,o,r)	& \multicolumn{9}{c|}{------------------------------ inf ------------------------------}\\
DFP (d,o,10,r)	&  \multicolumn{9}{c|}{------------------------------ inf ------------------------------}\\
DFP (d,o,100,r)	&  \multicolumn{9}{c|}{------------------------------ inf ------------------------------}\\
DFP (i,1)       	&  \multicolumn{9}{c|}{------------------------------ inf ------------------------------}\\
DFP (i,v) 	&  \multicolumn{9}{c|}{------------------------------ inf ------------------------------}\\
DFP (i,v,r)	&  \multicolumn{9}{c|}{------------------------------ inf ------------------------------}\\
DFP (i,s) 	&1206	 & Inf	 & Inf	 & Inf	 & Inf	 & Inf	 & 977	 &1187	 & Inf\\
DFP (i,s,r)	&  \multicolumn{9}{c|}{------------------------------ inf ------------------------------}\\
DFP (i,p) 	&  \multicolumn{9}{c|}{------------------------------ inf ------------------------------}\\
DFP (i,p,r)	&  \multicolumn{9}{c|}{------------------------------ inf ------------------------------}\\
DFP (i,o,s) 	&2924	 & Inf	 & Inf	 &3727	 &3469	 & Inf	 &3349	 &3312	 &4344\\
DFP (i,o) 	& Inf	 & Inf	 & Inf	 & Inf	 & Inf	 & Inf	 & Inf	 & Inf	 & Inf\\
DFP (i,o,s,10) 	&\textbf{1037}	 & Inf	 & Inf	 & \textbf{852}	 & \textbf{917}	 & Inf	 & \textbf{898}	 &\textbf{1021}	 &\textbf{1091}\\
DFP (i,o,10) 	&  \multicolumn{9}{c|}{------------------------------ inf ------------------------------}\\
DFP (i,o,s,100) 	&2064	 & Inf	 &\textbf{8840}	 &1362	 & 972	 &\textbf{1315}	 &2250	 &1643	 &2106\\
DFP (i,o,100) 	& \multicolumn{9}{c|}{------------------------------ inf ------------------------------}\\
DFP (i,o,s,r)	& \multicolumn{9}{c|}{------------------------------ inf ------------------------------}\\
DFP (i,o,r)	&  \multicolumn{9}{c|}{------------------------------ inf ------------------------------}\\
DFP (i,o,s,10,r)	&  \multicolumn{9}{c|}{------------------------------ inf ------------------------------}\\
DFP (i,o,10,r)	&  \multicolumn{9}{c|}{------------------------------ inf ------------------------------}\\
DFP (i,o,s,100,r)	&  \multicolumn{9}{c|}{------------------------------ inf ------------------------------}\\
DFP (i,o,100,r)	& \multicolumn{9}{c|}{------------------------------ inf ------------------------------}\\
\hline
BFGS (d,1)       	&  \multicolumn{9}{c|}{------------------------------ inf ------------------------------}\\
BFGS (d,v)	& 817	 & Inf	 &\textbf{1177}	 & \textbf{681}	 &1035	 & Inf	 & 876	 & Inf	 & Inf\\
BFGS (d,v,r)	&  \multicolumn{9}{c|}{------------------------------ inf ------------------------------}\\
BFGS (d,s)	&1093	 & Inf	 &6714	 & Inf	 & Inf	 & Inf	 & Inf	 & Inf	 & Inf\\
BFGS (d,s,r)	& \multicolumn{9}{c|}{------------------------------ inf ------------------------------}\\
BFGS (d,p)	&  \multicolumn{9}{c|}{------------------------------ inf ------------------------------}\\
BFGS (d,p,r)	&  \multicolumn{9}{c|}{------------------------------ inf ------------------------------}\\
BFGS (d,o)	&1494	 & Inf	 &9609	 &1497	 &2012	 & Inf	 &1881	 & Inf	 &1552\\
BFGS (d,o,10)	&1153	 &7282	 &9609	 &1497	 &2683	 & Inf	 &1881	 & Inf	 &1552\\
BFGS (d,o,100)	& Inf	 & Inf	 &9609	 &1497	 &2168	 & Inf	 &1881	 & Inf	 &1552\\
BFGS (d,o,r)	& Inf	 & Inf	 & Inf	 &5033	 & Inf	 & Inf	 & Inf	 & Inf	 & Inf\\
BFGS (d,o,10,r)	&  \multicolumn{9}{c|}{------------------------------ inf ------------------------------}\\
BFGS (d,o,100,r)	&  \multicolumn{9}{c|}{------------------------------ inf ------------------------------}\\
BFGS (i,1)       	&  \multicolumn{9}{c|}{------------------------------ inf ------------------------------}\\
BFGS (i,v) 	& \textbf{666}	 &\textbf{3069}	 &1907	 & 691	 & \textbf{830}	 & Inf	 & \textbf{837}	 & Inf	 & \textbf{690}\\
BFGS (i,v,r)	& \multicolumn{9}{c|}{------------------------------ inf ------------------------------}\\
BFGS (i,s) 	&1296	 &5729	 &2523	 &1001	 &1471	 & Inf	 & Inf	 & Inf	 & Inf\\
BFGS (i,s,r)	& \multicolumn{9}{c|}{------------------------------ inf ------------------------------}\\
BFGS (i,p) 	&1415	 &5838	 &3049	 &1109	 &1220	 & Inf	 & 988	 & Inf	 & Inf\\
BFGS (i,p,r)	&  \multicolumn{9}{c|}{------------------------------ inf ------------------------------}\\
BFGS (i,o,s) 	&6664	 & Inf	 & Inf	 &4435	 &5581	 &\textbf{9759}	 &3542	 &6905	 & Inf\\
BFGS (i,o) 	&  \multicolumn{9}{c|}{------------------------------ inf ------------------------------}\\
BFGS (i,o,s,10) 	&1303	 & Inf	 &2649	 & Inf	 &1170	 & Inf	 &1045	 & Inf	 &2379\\
BFGS (i,o,10) 	&  \multicolumn{9}{c|}{------------------------------ inf ------------------------------}\\
BFGS (i,o,s,100) 	& Inf	 & Inf	 &2565	 &2244	 & Inf	 & Inf	 & Inf	 & Inf	 & Inf\\
BFGS (i,o,100) 	&  \multicolumn{9}{c|}{------------------------------ inf ------------------------------}\\
BFGS (i,o,s,r)	&3251	 & Inf	 & Inf	 &2768	 &4816	 & Inf	 &2827	 &\textbf{5122}	 &7782\\
BFGS (i,o,r)	&  \multicolumn{9}{c|}{------------------------------ inf ------------------------------}\\
BFGS (i,o,s,10,r)	&  \multicolumn{9}{c|}{------------------------------ inf ------------------------------}\\
BFGS (i,o,10,r) & \multicolumn{9}{c|}{------------------------------ inf ------------------------------}\\
BFGS (i,o,s,100,r)	&5830	 & Inf	 & Inf	 &4750	 & Inf	 & Inf	 & Inf	 & Inf	 & Inf\\
BFGS (i,o,100,r)	&  \multicolumn{9}{c|}{------------------------------ inf ------------------------------}\\
\hline
\end{tabular}
}
\caption{\textbf{Cross-entropy loss}. Number of iterations until $\|\nabla f(x_k)\|/\|\nabla f(x_0)\| \leq \epsilon = 10^{-3}$. $q = 5$. inf = more than 10000 iterations. $m = 100, n = 50$.   
d = direct update, i = inverse update.
1 = single-secant, v = vanilla, s = symmetric, p = PSD, o = ours.
s = scaling, r = rejection with 0.01 tolerance.
Number refers to $\nu$ value in $\mu$-correction.
DFP and BFGS methods shown.}
\label{tab:app:mcloss:methodsB}
\end{table}

\setlength{\tabcolsep}{6pt}  
\renewcommand{\arraystretch}{1}

\subsection{Logistic regression limited memory BFGS}
\label{app:extranumerics:logreg_limited}

This section presents extended results for the multiclass logistic regression minimization, in table \ref{tab:app:logreg_limited_appendix_A}-\ref{tab:app:logreg_limited_appendix_C}.

\setlength{\tabcolsep}{3pt}
\renewcommand{\arraystretch}{0.8}

\begin{table}[ht!]
{\small
\centering
\begin{tabular}{|ll||cc|cc|cc||cc|cc|cc||cc|cc|cc||cc|cc|cc||}
\hline
&&\multicolumn{6}{c||}{Low signal regime}&\multicolumn{6}{c|}{High signal regime}\\
 && \multicolumn{2}{c|}{$\bar c = 10$}  	& \multicolumn{2}{c|}{$\bar c =  30$} 	& \multicolumn{2}{c||}{$\bar c =  50$} 	& \multicolumn{2}{c|}{$\bar c = 10$} 	& \multicolumn{2}{c|}{$\bar c =  30$} 	& \multicolumn{2}{c|}{$\bar c =  50$} 	\\
&&cu 	&an 	&cu 	&an 	&cu 	&an 	&cu 	& an 	&cu 	&  an 	&cu 	&an 	\\\hline   
\multicolumn{2}{|l||}{Newton's}	&  11	 &  11	 &  11	 &  11	 &  11	 &  11	 &  11	 &  11	 &  11	 &  11	 &  11	 &  11\\
\multicolumn{2}{|l||}{Grad Desc}	&2051	 &2051	 &2010	 &2010	 &2002	 &2002	 &2357	 &2357	 &2106	 &2106	 &2060	 &2060\\\hline
($L$,$q$) &(type,$\gamma$,*)&&&&&&&&&&&&\\

(1,1)&(1,0.1)      	&7991	 &7991	 &8001	 &8001	 &8003	 &8003	 &8125	 &8125	 &8049	 &8049	 &8034	 &8034\\
(1,1)&(1,1)      	&5668	 &5668	 &5663	 &5663	 &5663	 &5663	 &5777	 &5777	 &5700	 &5700	 &5687	 &5687\\
(1,1)&(1,10)      	&3332	 &3332	 &3341	 &3341	 &3342	 &3342	 &3451	 &3451	 &3389	 &3389	 &3377	 &3377\\
(1,1)&(1,100)      	&   4	 &   4	 &   4	 &   4	 &   4	 &   4	 &   4	 &   4	 &   4	 &   4	 &   4	 &   4\\
(1,5)&(v,0.1)	& Inf	 &9257	 & Inf	 & Inf	 & Inf	 & Inf	 &9311	 & Inf	 & Inf	 & Inf	 & Inf	 & Inf\\
(1,5)&(v,1)	&9369	 &8771	 & Inf	 &7776	 &8282	 &7933	 &9444	 &8285	 & Inf	 &7784	 &8845	 &9021\\
(1,5)&(v,10)	&6958	 &5242	 &6598	 &5755	 &5480	 &5734	 &5306	 &7878	 &9293	 & Inf	 &6748	 &5695\\
(1,5)&(v,100)	&   8	 &   8	 &4471	 & Inf	 &2318	 &3216	 &   8	 &   8	 &2974	 &2701	 &3430	 &3225\\
(1,5)&(v,0.1,r)	&8943	 &8933	 & Inf	 &8936	 &8947	 &8938	 & Inf	 &8981	 &8952	 &8952	 &8946	 &8946\\
(1,5)&(v,1,r)	& Inf	 &8063	 & Inf	 &8163	 & Inf	 &8063	 & Inf	 &7824	 & Inf	 &8087	 & Inf	 &8215\\
(1,5)&(v,10,r)	& Inf	 &5568	 & Inf	 &5656	 &9349	 &5775	 & Inf	 &5617	 & Inf	 &5682	 & Inf	 &5618\\
(1,5)&(v,100,r)	&   8	 &   8	 &5956	 &3012	 &3702	 &1563	 &   8	 &   8	 &3889	 &2586	 &8435	 &2985\\
(1,5)&(s,0.1)	& Inf	 & Inf	 & Inf	 & Inf	 & Inf	 & Inf	 & Inf	 & Inf	 & Inf	 & Inf	 & Inf	 & Inf\\
(1,5)&(s,1)	& \multicolumn{12}{c|}{------------------------------ inf ------------------------------}\\
(1,5)&(s,10)	& Inf	 & Inf	 & Inf	 & Inf	 & Inf	 & Inf	 & Inf	 & Inf	 & Inf	 &1341	 &1400	 & Inf\\
(1,5)&(s,100)	&   8	 &   8	 &   8	 &   8	 &   8	 &   8	 &   8	 &   8	 & Inf	 & Inf	 & Inf	 & Inf\\
(1,5)&(s,0.1,r)	& Inf	 &8890	 & Inf	 &8911	 & Inf	 &8858	 & Inf	 &8962	 &8952	 &8952	 &8946	 &8946\\
(1,5)&(s,1,r)	& Inf	 &7899	 & Inf	 &7996	 & 790	 &8067	 & Inf	 &7993	 & Inf	 &7589	 & Inf	 &8100\\
(1,5)&(s,10,r)	&1005	 &4813	 & Inf	 &5184	 & Inf	 &5661	 &1232	 &5534	 & Inf	 &5384	 & Inf	 &5562\\
(1,5)&(s,100,r)	&   8	 &   8	 &   8	 &   8	 &   8	 &   8	 &   8	 &   8	 & Inf	 &3001	 & Inf	 & Inf\\
(1,5)&(o,0.1,sc)	&\multicolumn{12}{c|}{------------------------------ inf ------------------------------}\\
(1,5)&(o,1,sc)	&\multicolumn{12}{c|}{------------------------------ inf ------------------------------}\\
(1,5)&(o,10,sc)	& \multicolumn{12}{c|}{------------------------------ inf ------------------------------}\\
(1,5)&(o,100,sc)	& Inf	 & Inf	 & Inf	 & Inf	 & Inf	 & Inf	 & Inf	 & Inf	 & Inf	 & Inf	 & Inf	 & Inf\\
(1,5)&(o,0.1,r,sc)	& Inf	 &   7	 & Inf	 &   5	 & Inf	 & Inf	 & Inf	 &4125	 & Inf	 & Inf	 & Inf	 & Inf\\
(1,5)&(o,1,r,sc)	& Inf	 & Inf	 & Inf	 &6795	 & Inf	 & Inf	 & Inf	 & Inf	 & Inf	 & Inf	 & Inf	 & Inf\\
(1,5)&(o,10,r,sc)	& Inf	 &4918	 & Inf	 & Inf	 & Inf	 & Inf	 & Inf	 & Inf	 & Inf	 &5180	 & Inf	 & Inf\\
(1,5)&(o,100,r,sc)	& Inf	 & Inf	 & Inf	 &   7	 & Inf	 & Inf	 & Inf	 & Inf	 & Inf	 &2593	 & Inf	 & Inf\\
(1,5)&(o,0.1)	& \multicolumn{12}{c|}{------------------------------ inf ------------------------------}\\
(1,5)&(o,1)	& \multicolumn{12}{c|}{------------------------------ inf ------------------------------}\\
(1,5)&(o,10)	& \multicolumn{12}{c|}{------------------------------ inf ------------------------------}\\
(1,5)&(o,100)	& \multicolumn{12}{c|}{------------------------------ inf ------------------------------}\\
(1,5)&(o,0.1,r)	& Inf	 &   7	 & Inf	 &   5	 & Inf	 & Inf	 & Inf	 &4125	 & Inf	 & Inf	 & Inf	 & Inf\\
(1,5)&(o,1,r)	&\multicolumn{12}{c|}{------------------------------ inf ------------------------------}\\
(1,5)&(o,10,r)	& Inf	 & Inf	 & Inf	 & Inf	 & Inf	 & Inf	 & Inf	 & Inf	 & Inf	 &5501	 & Inf	 & Inf\\
(1,5)&(o,100,r)	& Inf	 & Inf	 & Inf	 &   7	 & Inf	 & Inf	 & Inf	 & Inf	 & Inf	 & Inf	 & Inf	 & Inf\\

\hline
\end{tabular}
}
\caption{\textbf{Logistic regression, L-MS-BFGS.}  Number  of iterations until $\|\nabla f(x_k)\|/\|\nabla f(x_0)\| \leq \epsilon = 10^{-4}$. $q = 5$. inf = more than 10000 iterations. $\sigma = 10, m = 2000, n = 1000$.    
For type,
1 = single-secant, v = vanilla, s = symmetric, o = ours.
sc = $\mu$-scaling, r = rejection.
}
\label{tab:app:logreg_limited_appendix_A}
\end{table}

\begin{table}[ht!]
{\small
\centering
\begin{tabular}{|ll||cc|cc|cc||cc|cc|cc||cc|cc|cc||cc|cc|cc||}
\hline
&&\multicolumn{6}{c||}{Low signal regime}&\multicolumn{6}{c|}{High signal regime}\\
 && \multicolumn{2}{c|}{$\bar c = 10$}  	& \multicolumn{2}{c|}{$\bar c =  30$} 	& \multicolumn{2}{c||}{$\bar c =  50$} 	& \multicolumn{2}{c|}{$\bar c = 10$} 	& \multicolumn{2}{c|}{$\bar c =  30$} 	& \multicolumn{2}{c|}{$\bar c =  50$} 	\\
&&cu 	&an 	&cu 	&an 	&cu 	&an 	&cu 	& an 	&cu 	&  an 	&cu 	&an 	\\\hline   
\multicolumn{2}{|l||}{Newton's}	&  11	 &  11	 &  11	 &  11	 &  11	 &  11	 &  11	 &  11	 &  11	 &  11	 &  11	 &  11\\
\multicolumn{2}{|l||}{Grad Desc}	&2051	 &2051	 &2010	 &2010	 &2002	 &2002	 &2357	 &2357	 &2106	 &2106	 &2060	 &2060\\\hline
($L$,$q$) &(type,$\gamma$,*)&&&&&&&&&&&&\\

(5,1)&(1,0.1)      	&8919	 &8919	 &8923	 &8923	 &8924	 &8924	 &8964	 &8964	 &8939	 &8939	 &8934	 &8934\\
(5,1)&(1,1)      	&7514	 &7514	 &7570	 &7570	 &7574	 &7574	 &7598	 &7598	 &7581	 &7581	 &7598	 &7598\\
(5,1)&(1,10)      	&5166	 &5166	 &5212	 &5212	 &5235	 &5235	 &5241	 &5241	 &5238	 &5238	 &5243	 &5243\\
(5,1)&(1,100)      		& 508	 & 508	 &2439	 &2439	 &2938	 &2938	 &1644	 &1644	 &2819	 &2819	 &2904	 &2904\\
(5,5)&(v,0.1)	& Inf	 & Inf	 & Inf	 & Inf	 & Inf	 &9166	 & Inf	 & Inf	 & Inf	 & Inf	 & Inf	 &9351\\
(5,5)&(v,1)	&9051	 & Inf	 &9305	 &9151	 & Inf	 & Inf	 &9353	 & Inf	 &9105	 & Inf	 &9433	 &9432\\
(5,5)&(v,10)	&6358	 &6634	 &6464	 &6333	 &8821	 &7178	 &6687	 &6484	 &7356	 &6348	 &6365	 &6771\\
(5,5)&(v,100)	&3783	 &3588	 &3922	 &3913	 &4032	 &6309	 &3652	 &3671	 &4146	 &7440	 &6516	 &4168\\
(5,5)&(v,0.1,r)	&8943	 &8934	 & Inf	 &8936	 &8947	 &8938	 & Inf	 &8981	 &8952	 &8952	 &8946	 &8946\\
(5,5)&(v,1,r)	& Inf	 &8729	 & Inf	 &8856	 & Inf	 &8895	 & Inf	 &8804	 & Inf	 &8882	 & Inf	 &8912\\
(5,5)&(v,10,r)	& Inf	 &6473	 & Inf	 &6670	 & Inf	 &6798	 & Inf	 &6508	 & Inf	 &6801	 & Inf	 &6748\\
(5,5)&(v,100,r)	&4337	 &3254	 &4952	 &1500	 &6497	 &4290	 &4536	 &3084	 &5131	 &  17	 &5841	 &  18\\
(5,5)&(s,0.1)	& Inf	 & Inf	 & Inf	 & Inf	 & Inf	 & Inf	 &   6	 &   6	 &   5	 &   5	 & Inf	 & Inf\\
(5,5)&(s,1)	& Inf	 & Inf	 & Inf	 &2797	 & Inf	 & Inf	 & Inf	 & Inf	 & Inf	 & Inf	 & Inf	 & Inf\\
(5,5)&(s,10)	&2438	 & Inf	 & Inf	 &2571	 &2903	 & Inf	 & Inf	 &2544	 & Inf	 & Inf	 & Inf	 &2351\\
(5,5)&(s,100)	& Inf	 & Inf	 & Inf	 &2954	 & Inf	 &2671	 &2924	 &2909	 &2688	 &2383	 &2201	 &2570\\
(5,5)&(s,0.1,r)	& Inf	 &8933	 & Inf	 &7377	 & Inf	 &8919	 &   6	 &4368	 &8952	 &8952	 &8946	 &8946\\
(5,5)&(s,1,r)	& Inf	 &8780	 & Inf	 &8860	 &2464	 &8885	 & Inf	 &8773	 & Inf	 &8855	 &2929	 &8883\\
(5,5)&(s,10,r)	&2279	 &6507	 & Inf	 &6697	 &2308	 &6789	 &2576	 &6555	 &2443	 &6649	 & Inf	 &6754\\
(5,5)&(s,100,r)	&2925	 &3345	 &2518	 & Inf	 &2545	 & 300	 &2777	 &3409	 &2448	 &3860	 &2636	 &4057\\
(5,5)&(o,0.1,sc)	&\multicolumn{12}{c|}{------------------------------ inf ------------------------------}\\
(5,5)&(o,1,sc)	& Inf	 &2714	 & Inf	 & Inf	 & Inf	 & Inf	 & Inf	 & Inf	 & Inf	 & Inf	 & Inf	 & Inf\\
(5,5)&(o,10,sc)	& Inf	 & Inf	 & Inf	 & Inf	 & Inf	 &  21	 & Inf	 &  28	 & Inf	 & Inf	 & Inf	 & Inf\\
(5,5)&(o,100,sc)	& \multicolumn{12}{c|}{------------------------------ inf ------------------------------}\\
(5,5)&(o,0.1,r,sc)	& Inf	 &8876	 & Inf	 & Inf	 & Inf	 & Inf	 & Inf	 &8766	 & Inf	 & Inf	 & Inf	 & Inf\\
(5,5)&(o,1,r,sc)	&2766	 &8747	 & Inf	 &8852	 &2527	 &8895	 & Inf	 &8782	 & Inf	 &8863	 & Inf	 &8912\\
(5,5)&(o,10,r,sc)	& \multicolumn{12}{c|}{------------------------------ inf ------------------------------}\\
(5,5)&(o,100,r,sc)	& Inf	 &3410	 & Inf	 & Inf	 & Inf	 & Inf	 & Inf	 & Inf	 & Inf	 & Inf	 & Inf	 &4727\\
(5,5)&(o,0.1)	&\multicolumn{12}{c|}{------------------------------ inf ------------------------------}\\
(5,5)&(o,1)	& Inf	 &2714	 & Inf	 & Inf	 & Inf	 & Inf	 & Inf	 & Inf	 & Inf	 & Inf	 & Inf	 & Inf\\
(5,5)&(o,10)	& Inf	 & Inf	 & Inf	 & Inf	 & Inf	 &  21	 & Inf	 &  28	 & Inf	 & Inf	 & Inf	 & Inf\\
(5,5)&(o,100)	& \multicolumn{12}{c|}{------------------------------ inf ------------------------------}\\
(5,5)&(o,0.1,r)	& Inf	 &8876	 & Inf	 & Inf	 & Inf	 & Inf	 & Inf	 &8766	 & Inf	 & Inf	 & Inf	 & Inf\\
(5,5)&(o,1,r)	&2766	 &8747	 & Inf	 &8852	 &2527	 &8895	 & Inf	 &8782	 & Inf	 &8863	 & Inf	 &8912\\
(5,5)&(o,10,r)	&\multicolumn{12}{c|}{------------------------------ inf ------------------------------}\\
(5,5)&(o,100,r)	& \multicolumn{12}{c|}{------------------------------ inf ------------------------------}\\

\hline
\end{tabular}
}
\caption{\textbf{Logistic regression, L-MS-BFGS.}  Number  of iterations until $\|\nabla f(x_t)\|/\|\nabla f(x_0)\| \leq \epsilon = 10^{-4}$.  inf = more than 10000 iterations. $\sigma = 10, m = 2000, n = 1000$.    
For type,
1 = single-secant, v = vanilla, s = symmetric, o = ours.
sc = $\mu$-scaling, r = rejection.
}
\label{tab:app:logreg_limited_appendix_B}
\end{table}

\begin{table}[ht!]
{\small
\centering
\begin{tabular}{|ll||cc|cc|cc||cc|cc|cc||cc|cc|cc||cc|cc|cc||}
\hline
&&\multicolumn{6}{c||}{Low signal regime}&\multicolumn{6}{c|}{High signal regime}\\
 && \multicolumn{2}{c|}{$\bar c = 10$}  	& \multicolumn{2}{c|}{$\bar c =  30$} 	& \multicolumn{2}{c||}{$\bar c =  50$} 	& \multicolumn{2}{c|}{$\bar c = 10$} 	& \multicolumn{2}{c|}{$\bar c =  30$} 	& \multicolumn{2}{c|}{$\bar c =  50$} 	\\
&&cu 	&an 	&cu 	&an 	&cu 	&an 	&cu 	& an 	&cu 	&  an 	&cu 	&an 	\\\hline   
\multicolumn{2}{|l||}{Newton's}	&  11	 &  11	 &  11	 &  11	 &  11	 &  11	 &  11	 &  11	 &  11	 &  11	 &  11	 &  11\\
\multicolumn{2}{|l||}{Grad Desc}	&2051	 &2051	 &2010	 &2010	 &2002	 &2002	 &2357	 &2357	 &2106	 &2106	 &2060	 &2060\\\hline
($L$,$q$) &(type,$\gamma$,*)&&&&&&&&&&&&\\

(10,1)&(1,0.1)      	&8926	 &8926	 &8931	 &8931	 &8934	 &8934	 &8973	 &8973	 &8947	 &8947	 &8944	 &8944\\
(10,1)&(1,1)      	&7875	 &7875	 &7917	 &7917	 &7944	 &7944	 &7934	 &7934	 &7961	 &7961	 &7947	 &7947\\
(10,1)&(1,10)      &5676	 &5676	 &5754	 &5754	 &5783	 &5783	 &5775	 &5775	 &5787	 &5787	 &5823	 &5823\\
(10,1)&(1,100)      	 	&3067	 &3067	 &3461	 &3461	 &3418	 &3418	 &3162	 &3162	 &3387	 &3387	 &3466	 &3466\\
(10,5)&(v,0.1)	&9379	 & Inf	 & Inf	 & Inf	 & Inf	 &9204	 &9730	 & Inf	 & Inf	 &9105	 &9911	 &9345\\
(10,5)&(v,1)	&9389	 &8894	 &9636	 &9031	 &9308	 &9413	 & Inf	 &9060	 &9053	 & Inf	 &9558	 &9234\\
(10,5)&(v,10)	&7007	 &7135	 &7216	 &6999	 &7319	 &7941	 &7827	 &7042	 &7006	 & Inf	 &7142	 &7232\\
(10,5)&(v,100)	&4400	 &4392	 &5257	 &4597	 & Inf	 &4904	 &4372	 &5307	 &4820	 &4873	 &9186	 &4972\\
(10,5)&(v,0.1,r)	&8943	 &8934	 & Inf	 &8936	 &8947	 &8938	 & Inf	 &8981	 &8952	 &8952	 &8946	 &8946\\
(10,5)&(v,1,r)	& Inf	 &8810	 & Inf	 &8897	 & Inf	 &8926	 & Inf	 &8838	 & Inf	 &8901	 & Inf	 &8940\\
(10,5)&(v,10,r)	& Inf	 &6848	 & Inf	 &7085	 &8572	 &7211	 & Inf	 &7005	 & Inf	 &7149	 & Inf	 &7217\\
(10,5)&(v,100,r)	&6677	 &4001	 &8304	 &4611	 &6084	 &4839	 &8620	 &4366	 &6532	 &4726	 &6291	 &4778\\
(10,5)&(s,0.1)	& Inf	 & Inf	 & Inf	 & Inf	 & Inf	 & Inf	 &   6	 &   6	 &   5	 &   5	 & Inf	 & Inf\\
(10,5)&(s,1)	&2849	 & Inf	 & Inf	 & Inf	 & Inf	 &2717	 & Inf	 & Inf	 &2782	 & Inf	 & Inf	 &2826\\
(10,5)&(s,10)	&3064	 & Inf	 & Inf	 & Inf	 & Inf	 & Inf	 & Inf	 &3032	 & Inf	 & Inf	 & Inf	 & Inf\\
(10,5)&(s,100)	& Inf	 &3047	 &3233	 &3014	 & Inf	 & Inf	 &3068	 &3041	 & Inf	 &3073	 &2795	 &3271\\
(10,5)&(s,0.1,r)	& Inf	 &8933	 & Inf	 &7390	 & Inf	 &8919	 &   6	 &4138	 &8952	 &8952	 &8946	 &8946\\
(10,5)&(s,1,r)	&2466	 &8787	 & Inf	 &8882	 & Inf	 &8922	 & Inf	 &8818	 & Inf	 &8895	 & Inf	 &8936\\
(10,5)&(s,10,r)	& Inf	 &6840	 & Inf	 &7092	 & Inf	 &7159	 & Inf	 &6948	 & Inf	 &7163	 &2974	 &7158\\
(10,5)&(s,100,r)	& Inf	 &4071	 &2930	 &4750	 & Inf	 &4806	 &2941	 &4456	 & Inf	 &4665	 &3130	 &4768\\
(10,5)&(o,0.1,sc)	&\multicolumn{12}{c|}{------------------------------ inf ------------------------------}\\
(10,5)&(o,1,sc)	&2705	 & Inf	 & Inf	 & Inf	 & Inf	 & Inf	 & Inf	 & Inf	 &3008	 & Inf	 & Inf	 &2749\\
(10,5)&(o,10,sc)	& Inf	 &  39	 & Inf	 & Inf	 & Inf	 & Inf	 & Inf	 & Inf	 & Inf	 & Inf	 &  39	 & Inf\\
(10,5)&(o,100,sc)	& \multicolumn{12}{c|}{------------------------------ inf ------------------------------}\\
(10,5)&(o,0.1,r,sc)	& Inf	 &8456	 & Inf	 & Inf	 & Inf	 & Inf	 & Inf	 &8786	 & 205	 & Inf	 & 203	 & Inf\\
(10,5)&(o,1,r,sc)	& Inf	 &8775	 & Inf	 &8886	 &2747	 &8930	 &2542	 &8852	 & Inf	 &8919	 & Inf	 &8944\\
(10,5)&(o,10,r,sc)	& Inf	 & Inf	 & Inf	 & Inf	 & Inf	 & Inf	 & Inf	 & Inf	 & Inf	 & Inf	 &  39	 &7278\\
(10,5)&(o,100,r,sc)	& Inf	 &3876	 & Inf	 & Inf	 & Inf	 & Inf	 & Inf	 & Inf	 & Inf	 & Inf	 & Inf	 & Inf\\
(10,5)&(o,0.1)	& \multicolumn{12}{c|}{------------------------------ inf ------------------------------}\\
(10,5)&(o,1)	&2705	 & Inf	 & Inf	 & Inf	 & Inf	 & Inf	 & Inf	 & Inf	 &3008	 & Inf	 & Inf	 &2749\\
(10,5)&(o,10)	& Inf	 & Inf	 & Inf	 & Inf	 & Inf	 & Inf	 & Inf	 & Inf	 &  28	 & Inf	 & Inf	 & Inf\\
(10,5)&(o,100)	&\multicolumn{12}{c|}{------------------------------ inf ------------------------------}\\
(10,5)&(o,0.1,r)	& Inf	 &8456	 & Inf	 & Inf	 & Inf	 & Inf	 & Inf	 &8786	 & 205	 & Inf	 & 203	 & Inf\\
(10,5)&(o,1,r)	& Inf	 &8775	 & Inf	 &8886	 &2747	 &8930	 &2542	 &8852	 & Inf	 &8919	 & Inf	 &8944\\
(10,5)&(o,10,r)	& Inf	 & Inf	 & Inf	 & Inf	 & Inf	 & Inf	 & Inf	 & Inf	 &  28	 &5349	 & Inf	 & Inf\\
(10,5)&(o,100,r)	& \multicolumn{12}{c|}{------------------------------ inf ------------------------------}\\

\hline
\end{tabular}
}
\caption{\textbf{Logistic regression, L-MS-BFGS.}  Number  of iterations until $\|\nabla f(x_t)\|/\|\nabla f(x_0)\| \leq \epsilon = 10^{-4}$. $q = 5$. inf = more than 10000 iterations. $\sigma = 10, m = 2000, n = 1000$.    
For type,
1 = single-secant, v = vanilla, s = symmetric, o = ours.
sc = $\mu$-scaling, r = rejection.
}
\label{tab:app:logreg_limited_appendix_C}
\end{table}

\setlength{\tabcolsep}{6pt}  
\renewcommand{\arraystretch}{1}

\subsection{Data availability statement}
All experiments are done using simulations, which include code which will be made available on GitHub, upon acceptance. 
\end{document}